\tikzstyle directed=[postaction={decorate,decoration={markings,
    mark=at position #1 with {\arrow{>}}}}]
\tikzstyle rdirected=[postaction={decorate,decoration={markings,
    mark=at position #1 with {\arrow{<}}}}]
\numberwithin{equation}{section}
\newtheorem{theorem}{Theorem}[section]
\newtheorem{proposition}[theorem]{Proposition}
\newtheorem{corollary}[theorem]{Corollary}
\newtheorem{lemma}[theorem]{Lemma}
\newtheorem{conjecture}[theorem]{Conjecture} 
\theoremstyle{definition}
\newtheorem{example}[theorem]{Example}
\newtheorem{remark}[theorem]{Remark}
\newcommand{\R}{\check{\mathsf{R}}}
\newcommand{\Af}{\mathsf{A}}
\newcommand{\Df}{\mathsf{D}}
\newcommand{\Hf}{\mathsf{H}}
\newcommand{\Ir}{\mathrm{I}}
\newcommand{\Pf}{\mathsf{P}}
\newcommand{\Sf}{\mathsf{S}}
\newcommand{\sfs}{\mathsf{s}}
\newcommand{\sft}{\mathsf{t}}
\newcommand{\dimm}{\mathsf{dim}}
\newcommand{\UU}{\mathsf{U}}
\newcommand{\Uq}{\mathsf{U_q}}
\newcommand{\WW}{\mathsf{W}}
\newcommand{\VV}{\mathsf{V}}
\newcommand{\ZZ}{\mathbb{Z}}
\newcommand{\CC}{\mathbb{C}}
\newcommand{\kk}{\mathbb{k}}
\newcommand{\TL}{\mathsf{TL}}
\newcommand{\wt}{\widetilde}
\newcommand{\End}{\mathsf{End}}
\newcommand{\Hom}{\mathsf{Hom}}
\newcommand{\ot}{\otimes}
\newcommand{\half}{\frac{1}{2}}
\newcommand{\modd}{\operatorname{\mathsf{mod}}}
\newcommand{\id}{\operatorname{\mathsf{id}}}
\newcommand{\gammarrow}{\buildrel { \mathbf{\gamma}} \over  \longrightarrow}
\newcommand{\twoarrow}{\buildrel {\mathbf{2}} \over  \longrightarrow}
\newcommand{\onearrow}{\buildrel {\mathbf{1}} \over  \longrightarrow}
\newcommand{\bc}{(bc)^{\frac{n-1}{2}}}
\title[Tensor Representations for the Drinfeld Double of the Taft Algebra]{Tensor Representations for the Drinfeld Double \\
 of the Taft Algebra}
\author[Benkart]{Georgia Benkart} 
\address[Benkart]{Department of Mathematics, University of Wisconsin-Madison, Madison, WI 53706, USA}
\author[Biswal]{Rekha Biswal} 
\address[Biswal]{Max Planck Institute for Mathematics, 53111 Bonn, Germany}  
\email{rekhabiswal27@gmail.com} 
\author[Kirkman]{Ellen Kirkman}
\address[Kirkman]{Department of Mathematics, Wake Forest University, Winston-Salem, NC 27109, USA} 
\email{kirkman@wfu.edu} 
\author[Nguyen]{Van C.~Nguyen}
\address[Nguyen]{Department of Mathematics, United States Naval Academy, Annapolis, MD 21402, USA}
\email{vnguyen@usna.edu} 
\author[Zhu]{Jieru Zhu}
\address[Zhu]{Hausdorff Research Institute for Mathematics, 53115 Bonn, Germany}
\email{jieruzhu699@gmail.com} 
\date{\today}
\subjclass[2020]{Primary 16T05. Secondary 16T25, 20C08}
\keywords{quasitriangular Hopf algebra, ribbon element, R-matrix, Temperley-Lieb algebra, Bratteli diagram, 
centralizer algebra}
\begin{document}

\begin{abstract}
{Over an algebraically closed field $\mathbb k$ of characteristic zero, the Drinfeld double $\Df_n$ of the Taft algebra that is defined using a primitive $n$th root of unity $q \in \mathbb k$ for $n \geq 2$ is a quasitriangular Hopf algebra. Kauffman and Radford have shown that $\Df_n$ has a ribbon element if and only if $n$ is odd, and the ribbon element is unique; however there has been no explicit description of this element. In this work, we determine the ribbon element of $\Df_n$ explicitly. For any $n\ge  2$, we use the R-matrix of $\Df_n$ to construct an action of the Temperley-Lieb algebra $\mathsf{TL}_k(\xi)$ with $\xi = -(q^{\half}+q^{-\half})$ on the $k$-fold tensor power $\VV^{\otimes k}$ of any two-dimensional simple $\Df_n$-module $\VV$.  This action is known to be faithful for arbitrary $k \ge 1$. We show that $\mathsf{TL}_k(\xi)$ is isomorphic to the centralizer algebra $\End_{\Df_n}(\VV^{\otimes k})$ for $1 \le k\le 2n-2$.}
\end{abstract}
\maketitle

\section{Introduction}  

Quasitriangular Hopf algebras are widely studied in representation theory, knot theory, tensor categories, and quantum physics \cite{EG,KauffR,LR}. Well-known examples include quantum groups associated to various Lie algebras and superalgebras. Motivated by the study of quantum groups, Drinfeld \cite{Dr} introduced the notion of a  quantum double of any Hopf algebra, and the resulting
quasitriangular Hopf algebra is now referred to as the Drinfeld double. 

The Drinfeld double $\Df_n$ of the Taft Hopf algebra $\Af_n$  at a primitive $n$th root of unity $q$ is an example of a finite-dimensional nonsemisimple Hopf algebra, and  
it admits more complex behavior than semisimple counterparts.
Representations of $\Df_n$ in characteristic zero have been studied in depth in  \cite{Chen99,Chen2.5,Chen2.75,Chen2,Chen,Chen3}. In particular, $\Df_n$ has simple modules $\VV(\ell,r)$, for $1\leq \ell\leq n$ and $r \in \ZZ_n = \ZZ/n\ZZ$,
and the fusion rules for decomposing tensor products of simple and projective modules are known.    
These fusion rules were used in \cite{BBKNZ} by the authors of this article to construct
various types of McKay matrices for $\Df_n$, to determine their eigenvalues and eigenvectors, and
to relate them to characters of $\Df_n$-modules.   Chebyshev polynomials of the second, third and fourth kinds
play an essential role in describing these eigenvectors and eigenvalues, and in expressing the characters of the simple
modules $\VV(\ell,r)$ when  they are evaluated on the grouplike elements of $\Df_n$.

The algebra $\Df_n$ is a quasitriangular Hopf algebra.  This entails the existence of a distinguished element of $\Df_n \ot \Df_n$, the so-called R-matrix, which induces a $\Df_n$-module endomorphism on tensor products.
The R-matrix can be used to define  an element $u$ with special properties (see \eqref{eq:udef}). 
In particular, $u$ is central and acts as a scalar on each simple module.  Some quasitriangular Hopf algebras admit a ribbon element $\upsilon$, which can be regarded as a square root of $u$. 
The ribbon element satisfies axioms compatible with the quasitriangular structure (see Section~\ref{S3.1}).  It has been used extensively to construct invariants of framed links in three-dimensional space and invariants of 3-manifolds  (see for example, \cite{H,Li1, Li2,Li3,RT1,RT2,RW}).

Our focus is on the quasitriangular structure of $\Df_n$ and on tensor modules of the form $\VV(2,r)^{\otimes k}$, where $\VV(2,r)$ is any of the two-dimensional  simple $\Df_n$-modules.
Kauffman and Radford \cite[Prop. 7]{KauffR} proved that $\Df_n$ has a ribbon element if and only if $n$ is odd, and that
the ribbon element of $\Df_n$ is unique. However, there has been no explicit description of this element.  In this work, we determine the unique ribbon element in $\Df_n$ and  show that  the following theorem holds.  In the statement,  $b$ and $c$ are two of the four generators of
$\Df_n$, and the grouplike elements of $\Df_n$ are exactly the elements $b^i c^k$ for $i,k \in \ZZ_n$.
The modules $\VV(\ell,r)$, $1 \le \ell \le n$, $r \in \ZZ_n$, are all the simple $\Df_n$-modules. \\

\begin{theorem}~{\rm(Theorem \ref{thm:ribbonelt})}  For $n$ odd, $n \ge 3$, and $q$ a primitive $n$th root of unity,
the unique ribbon element of the Drinfeld double $\Df_n$ of the Taft algebra $\Af_n$ is $\upsilon=ub^{\frac{n-1}{2}}c^{\frac{n-1}{2}} = u\bc$.  Moreover, 
$\upsilon$ acts  by the scalar $q^{r(r+\ell-1)+\frac{1}{2}(n-1)(\ell-1)}$ on the simple $\Df_n$-module $\VV(\ell,r)$ for
all $1 \le \ell \le n$ and $r \in \ZZ_n$.
\end{theorem}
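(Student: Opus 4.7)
Since Kauffman--Radford guarantees that a ribbon element of $\Df_n$ exists and is unique when $n$ is odd, it suffices to exhibit one element satisfying the ribbon axioms. I would invoke the standard criterion of Radford: in a finite-dimensional quasitriangular Hopf algebra, a ribbon element has the form $\upsilon = ug^{-1}$ for some grouplike element $g$ satisfying (i) $S^2(x) = gxg^{-1}$ for all $x$ and (ii) $g^2 = uS(u)^{-1}$. Indeed, since the Drinfeld element $u$ already implements $S^2$ by conjugation, $ug^{-1}$ is automatically central; condition (ii) then forces the square to equal the central element $uS(u)$. The plan is to produce such a $g$, verify (i) and (ii), and then compute the scalar action of $\upsilon = u\bc$ on $\VV(\ell,r)$.

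\emph{Step 1 (find $g$).} The grouplike elements of $\Df_n$ are exactly $b^ic^k$ for $i,k \in \ZZ_n$, so the search is over a finite set. Using the explicit relations and coproduct of $\Df_n$ recorded earlier in the paper, I would compute $S^2$ on each of the generators $a,b,c,d$ and then match coefficients to identify the unique grouplike implementing $S^2$ by conjugation. The statement of the theorem tells us to expect $g=(bc)^{-(n-1)/2}$, so that $g^{-1}=\bc$ and $\upsilon=u\bc$.

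\emph{Step 2 (check $g^2=uS(u)^{-1}$).} For $n$ odd, $(n-1)/2$ is an integer, and since $b,c$ commute and each has order $n$, $g^2=(bc)^{-(n-1)}=bc$. The identity reduces to $S(u)=(bc)^{-1}u$. Using the explicit form of the Drinfeld element $u=m(S\ot\id)(R_{21})$ coming from the R-matrix of $\Df_n$, this is a direct if lengthy calculation. The oddness of $n$ enters precisely here: for $n$ even, the proposed $g$ is not a well-defined element of $\Df_n$, which explains why no ribbon element exists in that case.

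\emph{Step 3 (scalar on $\VV(\ell,r)$).} Centrality of $\upsilon$ and Schur's lemma force $\upsilon$, and hence also $\bc$, to act as a scalar on each simple module (even though $b$ and $c$ individually do not). I would fix a highest-weight vector $v_0$ of $\VV(\ell,r)$ and evaluate $\upsilon\cdot v_0 = u\bc\cdot v_0$, using the known scalar action of $u$ on $\VV(\ell,r)$ (from \cite{BBKNZ} or directly from the R-matrix) together with the $b,c$-eigenvalues on the standard weight basis. Simplifying modulo $q^n=1$ should produce $q^{r(r+\ell-1)+\frac{1}{2}(n-1)(\ell-1)}$, with the first summand coming from $u$ and the second from the grouplike factor $\bc$.

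The main obstacle is Step 2: extracting a usable closed form for $u\in\Df_n$ from the (rather large) R-matrix and carefully tracking the $q$-exponents and ordering of $a,b,c,d$ to establish $S(u)=(bc)^{-1}u$. Once that identity is in hand, Step 1 is a finite verification on generators and Step 3 is a short eigenvalue computation.
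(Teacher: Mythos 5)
Your plan is sound and would prove the theorem, but it takes a genuinely different route from the paper. You verify the ribbon axioms directly via the pivotal-grouplike criterion: find a grouplike $g$ with $S^2(x)=gxg^{-1}$ and $g^2=uS(u)^{-1}$, and set $\upsilon=ug^{-1}$. That criterion is correct (centrality follows from condition (i); $\upsilon^2=uS(u)$, $S(\upsilon)=\upsilon$, $\varepsilon(\upsilon)=1$, and the coproduct axiom all follow from (ii), grouplikeness, and the standard identities $\Delta(u)=(\Rc^{\mathsf{op}}\Rc)^{-1}(u\ot u)$, $S^2(u)=u$), and your candidate $g=(bc)^{-(n-1)/2}$ does satisfy (i), since $(bc)a(bc)^{-1}=q^2a$ forces $q^{-(n-1)}=q$, and similarly for $d$. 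The paper never checks centrality or any ribbon axiom directly: it instead computes the unique \emph{quasiribbon} element via Kauffman--Radford's Theorem 1 ($\upsilon=uh_{\tilde\alpha}'$ with $(h_{\tilde\alpha}')^2=h_{\tilde\alpha}=g_{\tilde\alpha}\,\tilde g^{-1}$), notes that in odd dimension the quasiribbon element is unique, and invokes the known existence of a ribbon element to conclude the two coincide. The practical payoff of the paper's route is precisely at your acknowledged obstacle, Step 2: rather than extracting $S(u)=(bc)^{-1}u$ from the R-matrix by brute force, the paper uses unimodularity of $\Df_n$ (so $\tilde\alpha=\varepsilon$, and $g_\varepsilon=1$ by a one-line computation in which only the $s=0$ terms of $\Rc$ survive $\varepsilon$) together with Radford's identification of the distinguished grouplike of a double as $\alpha_0\ot g_0$, reducing everything to computing the distinguished grouplikes of $\Af_n$ and $\Af_n^*$ from explicit right integrals; this yields $\tilde g=cb$ and hence $h_\varepsilon=(bc)^{-1}$ with no manipulation of $u$ at all. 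Your identity $S(u)=(bc)^{-1}u$ is true and equivalent to this, but as written your Step 2 is the crux of the theorem and is left unexecuted, so you should either carry out that computation or reroute through the integral-theoretic argument.

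One small practical point for Step 3: in the explicit expression for $u$ the monomials are ordered as $d^sc^{-s-m}b^ta^s$ with $a^s$ acting first, so the computation collapses (only $s=0$ survives) if you evaluate on the vector $v_\ell$ annihilated by $a$ rather than on the vector $v_1$ annihilated by $d$; on $v_1$ every $s\le\ell-1$ contributes and you must resum the $\alpha_i(\ell)$ coefficients. Since $\upsilon$ is central, any single basis vector suffices, so choose the one adapted to the ordering of $u$.
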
  
\vspace{-.05truein}

For any quasitriangular Hopf algebra $\Hf$  and any finite-dimensional $\Hf$-module $W$, the  R-matrix $\mathcal{R}$ of $\Hf$ can be used to construct elements of
$\End_{\Hf}(W^{\ot k})$  that satisfy axioms (QT1)-(QT3) in Section~\ref{S3.1}, where  (QT3)
is the well-known Yang-Baxter relation. As a result, tensor modules  for $\Hf$ admit an action of the braid group, which sometimes factors through an action of the Iwahori-Hecke algebra.
Leduc and Ram \cite{LR} describe a general recipe for constructing such an action on the $\Hf$-module $W^{\otimes k}$. Each generator $\sfs_i$, $1 \le i \le k-1$,  of the Iwahori-Hecke algebra acts by applying a scalar multiple of $\mathcal{R}$ to positions $i$ and $i+1$ of $W^{\otimes k}$ followed by switching those two tensor factors. The action of the Iwahori-Hecke algebra on $W^{\otimes k}$ is not faithful in general, and there is a kernel.  In Section~\ref{S4.1}, we describe the action of the Iwahori-Hecke algebra $\Hf_k(q)$ on $\VV^{\otimes k}$ for any two-dimensional simple $\Df_n$-module $\VV = \VV(2,r)$, $r \in \ZZ_n$.

The Temperley-Lieb algebra  is a quotient of the Iwahori-Hecke algebra,  and it has generators $\sft_i$, $1 \le i \le k-1$, that satisfy relations (R1')-(R4') in Section~\ref{S4.1}.   It was introduced in \cite{TL} to study the partition function of the Potts model of interacting spins in statistical mechanics and later shown to have  applications in the study of von Neumann algebras and subfactors \cite{JS}, tensor categories \cite{EG}, canonical bases of the quantum group $\Uq(\mathfrak{sl}_2)$ for
$\mathsf{q}$ generic \cite{FK}, $\mathfrak{sl}_2$-tensor invariants and webs \cite{CKM}, and countless other topics in mathematics and physics \cite{CES,K,KL,KR89}. 
The Temperley-Lieb algebra $\mathsf{TL}_k(\xi)$ that arises in our work is a quotient of the Iwahori-Hecke algebra $\Hf_k(q)$
and depends on the parameter $\xi = -(q^{\half}+q^{-\half})$. The main result of Section~\ref{SS4} is the following. \\

\begin{theorem}~{\rm (Theorem \ref{thm:TLalghom})}  Assume $q$ is a primitive $n$th root of unity for any  $n \ge 2$.
Let  $\VV= \VV(2,r)$, for  $r \in \ZZ_n$, be any two-dimensional simple $\Df_n$-module,  and set $\xi = -(q^{\half}+q^{-\half})$.  There is an injective algebra homomorphism
$\mathsf{TL}_k(\xi) \rightarrow \End_{\Df_n}(\VV^{\ot k})$ for $k \ge 2$  given
by $\sft_i\mapsto  q^{\half}(\lambda_r^{-1}\R_i- \id)$ for $1 \le i \le k-1$, where $\lambda_r = q^{-r(r+1)}$, and
$\R_i$ is the $\Df_n$-module homomorphism obtained by applying the {\rm R}-matrix of $\Df_n$ to tensor slots $i$ and $i+1$ of  $\VV^{\ot k}$ and then
interchanging those two factors.
\end{theorem}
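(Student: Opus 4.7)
The strategy is to exhibit the map as the composition of the Iwahori--Hecke action constructed in Section~\ref{S4.1} with the standard quotient to $\mathsf{TL}_k(\xi)$.  Writing $\sfs_i := \lambda_r^{-1} \R_i$, the prescription $\sft_i \mapsto q^{\half}(\sfs_i - \id)$ is the classical change of variables that converts Iwahori--Hecke generators into Temperley--Lieb generators.  The far commutation $\sfs_i \sfs_j = \sfs_j \sfs_i$ for $|i-j| \geq 2$ is automatic since $\R_i$ and $\R_j$ act on disjoint tensor slots, and the braid relation $\sfs_i \sfs_{i+1} \sfs_i = \sfs_{i+1} \sfs_i \sfs_{i+1}$ is equivalent to the Yang--Baxter equation (axiom (QT3)) satisfied by the R-matrix $\Rc$ of $\Df_n$.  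Both translate directly into the corresponding Temperley--Lieb relations, so the substantive content concerns the quadratic relation and the absorption relation.

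To verify the quadratic relation, I would compute $\R_i$ on $\VV(2,r) \otimes \VV(2,r)$ in an explicit basis adapted to the $\Df_n$-action.  The key observation is that $\VV(2,r)^{\otimes 2}$ decomposes as a direct sum of two simple $\Df_n$-submodules (drawn from the list of modules $\VV(\ell,r')$ classified in the preliminaries), and by Schur's lemma $\R_i$ acts as a scalar on each summand.  The normalization by $\lambda_r^{-1}$ is designed precisely so that these scalars become $1$ and $-q^{-1}$, yielding $(\sfs_i - 1)(\sfs_i + q^{-1}) = 0$ and hence $\sft_i^2 = -(q^{\half} + q^{-\half})\sft_i = \xi \sft_i$.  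The two-dimensionality of $\VV$ then drives the absorption relation $\sft_i \sft_{i\pm 1} \sft_i = \sft_i$: after rescaling, $\sft_i$ is a projector onto a one-dimensional $\Df_n$-submodule in slots $i, i+1$, and the three-fold product reduces to $\sft_i$ by a direct computation on $\VV^{\otimes 3}$, or equivalently by the standard Hecke-to-TL algebraic identity valid whenever the Hecke quadratic relation holds with the two-eigenvalue spectrum $\{1,-q^{-1}\}$.

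For injectivity, I would invoke the faithfulness of the Temperley--Lieb action on $\VV^{\otimes k}$ noted in the abstract.  A direct argument identifies this action, up to the twist by the grouplike element attached to $\VV(2,r)$, with the classical Temperley--Lieb action on the $k$-fold tensor power of the vector representation of $\Uq(\mathfrak{sl}_2)$ at the same root of unity; faithfulness of the latter is a standard consequence of the cellular structure of $\mathsf{TL}_k(\xi)$, whose diagrammatic basis has dimension the Catalan number $C_k$, together with the nonvanishing of the relevant Gram determinants for generic highest-weight-like vectors in $\VV^{\otimes k}$.  The main obstacle I anticipate is in the quadratic-relation step: one must carefully track the dependence of the R-matrix eigenvalues on the twist parameter $r$ labeling $\VV(2,r)$, and confirm that the single normalization $\lambda_r = q^{-r(r+1)}$ aligns the spectrum with $\{1,-q^{-1}\}$ uniformly for all $r \in \ZZ_n$.
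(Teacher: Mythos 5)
Your overall architecture matches the paper's (the R-matrix gives an Iwahori--Hecke action, which descends to $\TL_k(\xi)$, with injectivity quoted from the known faithfulness results of Goodman--Wenzl and Martin), but two of your key steps fail as stated. First, the decomposition of $\VV(2,r)^{\ot 2}$ into two non-isomorphic simple summands --- on which you base the Schur's-lemma determination of the spectrum of $\R$ and the description of $\sft_i$ as a projector onto a one-dimensional submodule --- is valid only for $n\ge 3$: complete reducibility of $\VV(\ell,s)\ot\VV(\ell',r)$ requires $\ell+\ell'\le n+1$, and for $n=2$ one has $\VV(2,s)\ot\VV(2,r)\cong\Pf(1,r+s+1)$, which is indecomposable. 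In that case $q=-1$, the two putative eigenvalues $\lambda_r$ and $-\lambda_rq^{-1}$ coincide, $\R$ has a nontrivial Jordan block on the span of $v_1\ot v_2$ and $v_2\ot v_1$, and $\xi=0$, so $\sft_i$ is a nonzero nilpotent rather than a projector. Since the theorem is asserted for all $n\ge 2$, this case cannot be discarded; the paper handles it uniformly by computing the $4\times 4$ matrix of $\R$ on $\VV^{\ot 2}$ explicitly (Lemma~\ref{relationR3}), which yields the quadratic relation $(\R-\lambda_r\id)(\R+\lambda_rq^{-1}\id)=0$ without any appeal to complete reducibility or diagonalizability.

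Second, your fallback for the absorption relation --- that $\sft_i\sft_{i+1}\sft_i=\sft_i$ follows from ``the standard Hecke-to-TL algebraic identity valid whenever the Hecke quadratic relation holds'' --- is not a valid step: $\TL_k(\xi)$ is a proper quotient of $\Hf_k(q)$ (already $\dimm\Hf_3(q)=6>5=\dimm\TL_3(\xi)$), so (R4') is not a formal consequence of (R1)--(R3) and must be verified in the representation. The correct mechanism is indeed the two-dimensionality of $\VV$ (vanishing of the rank-three $q$-antisymmetrizer), but that still requires an actual computation on $\VV^{\ot 3}$, which your proposal defers; the paper carries it out by checking $\sft_1\sft_2\sft_1=\sft_1$ on the four basis vectors annihilated by $\sft_1$ and then propagating to the remaining four basis vectors using the centrality of $\sft_1\sft_2\sft_1-\sft_1$ in $\Hf_3(q)$. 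Your primary route (a direct computation on $\VV^{\ot 3}$) is sound in principle for $n\ge 3$, but for $n=2$ the projector heuristic organizing it is unavailable. Finally, on injectivity your conclusion agrees with the paper's (citation of the faithfulness theorem), but the justification via ``nonvanishing of the relevant Gram determinants'' is misleading at a root of unity, where some Gram determinants do vanish and faithfulness is precisely the nontrivial content of the cited results.
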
  

When $3 \leq \ell \leq n$ and $\VV$ is the $\Df_n$-module $\VV= \VV(\ell,r)$,  the action of the braid group  on $\VV^{\ot k}$ satisfies additional relations beyond the braid group relations,  and this setting is considered in Section \ref{dim3}.  When $n$ is odd, $n \geq 3$, and $\ell$ satisfies $2 \ell \leq n+1$, the ribbon element is used to compute the eigenvalues of $\R_i^2$ on $\VV(\ell,r)^{\otimes k}$ in Proposition \ref{obviousrel}.  In Proposition \ref{dim3andlarbitrary} we provide further relations on the action of $\R_i$ when $n$ is any integer $\geq 5$ and $\ell = 3$. For arbitrary $n \geq 2$ and $2 \ell \leq n+1$ we compute two eigenvalues of $\R$ on $\VV(\ell,r)^{\otimes 2}$ in Proposition \ref{toptwo} and suggest a formula for all eigenvalues of $\R_i$ on $\VV(\ell,r)^{\otimes k}$ in Conjecture \ref{conjecture}.

In Section~\ref{SS5}, we compare the Bratteli diagram for tensoring with a simple two-dimensional $\Df_n$-module $\VV$ with the Bratteli diagram for tensoring with the simple module $\CC^2$ for the quantum group $\Uq(\mathfrak{sl}_2)$ 
at a generic value of $\mathsf{q} \in \CC$.    In the $\Uq(\mathfrak{sl}_2)$-case, nodes in the $k$th row of the Bratteli diagram can be labeled by partitions
$\beta$ of $k$ with at most two parts. This corresponds to the fact that the finite-dimensional simple $\Uq(\mathfrak{sl}_2)$-modules have highest weights
that can be labeled by such partitions.   The comparison enables us to prove the following result giving the dimension of the centralizer algebra 
$\End_{\Df_n}(\VV^{\ot k})$ for any two-dimensional simple $\Df_n$-module $\VV$.     In the statement, we suppose $\Sf_1,\Sf_2,\dots, \Sf_{n^2}$ is a listing of the simple $\Df_n$-modules;
$\Pf_1, \Pf_2, \dots, \Pf_{n^2}$ are respectively their projective covers;   $\Ir_k$ is the set of all $i \in \{1,2,\dots,n^2\}$
such that $\Sf_i$ or $\Pf_i$ or both occur in $\VV^{\ot k}$;  $s_i$ (resp. $p_i$) is the multiplicity of
$\Sf_i$ (resp. $\Pf_i$) in $\VV^{\ot k}$ for $i \in \Ir_k$; and $\Pf_i, \Pf_i'$ are two projective covers with the same composition factors but arranged differently. \\

\begin{theorem}~{\rm (Theorem \ref{thm:dimCent})} \label{THM1.3} For any  $n\ge 2$ and
 any two-dimensional simple $\Df_n$-module $\VV$, the dimension of the centralizer algebra $\End_{\Df_n}\left(\VV^{\ot k}\right)$ for
$k \ge 1$ is 
$$\dimm_{\kk} \End_{\Df_n}\left(\VV^{\ot k}\right)
= \sum_{i \in \Ir_k} p_i^2 + \sum_{i \in \Ir_k} (s_i+p_i)^2 +  2 \sum_{i \in \Ir_k} p_ip_i'.$$
\end{theorem}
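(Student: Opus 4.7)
The plan is to compute $\dim_{\kk}\End_{\Df_n}(\VV^{\otimes k})$ by Krull-Schmidt, writing
\[
\VV^{\otimes k} \;\cong\; \bigoplus_{i \in \Ir_k} \bigl(s_i\,\Sf_i \;\oplus\; p_i\,\Pf_i\bigr),
\]
and then summing the dimensions of the $\Hom$-spaces $\Hom_{\Df_n}(X_i,X_j)$ between all pairs of indecomposable summands $X_i, X_j \in \{\Sf_i, \Pf_i\}$. This breaks into four types of contributions --- simple-simple, simple-projective, projective-simple, and projective-projective --- so the entire proof reduces to computing one $\Hom$-dimension of each type, then multiplying by the appropriate multiplicities and summing.

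The first three dimensions follow from general principles. Schur's lemma yields $\dim\Hom(\Sf_i,\Sf_j)=\delta_{ij}$. Because the head of $\Pf_i$ equals $\Sf_i$ (by definition of projective cover), any nonzero map $\Pf_i \to \Sf_j$ forces $i=j$ and is unique up to scalar, so $\dim\Hom(\Pf_i,\Sf_j)=\delta_{ij}$. Because $\Df_n$ is a finite-dimensional Hopf algebra it is Frobenius, whence each $\Pf_j$ is also injective; using the explicit structure of the $\Df_n$-projectives established in the work of Chen cited earlier, I would check that the socle of each non-simple $\Pf_j$ is $\Sf_j$ (i.e., the Nakayama permutation is trivial on the relevant indices), giving $\dim\Hom(\Sf_i,\Pf_j)=\delta_{ij}$. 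For the projective-projective entries I would invoke the standard identity $\dim\Hom_{\Df_n}(\Pf_i,M)=[M:\Sf_i]$, proved by realizing $\Pf_i\cong \Df_n e_i$ for a primitive idempotent $e_i$ and noting $\Hom(\Df_n e_i,M)\cong e_i M$, and then read the Cartan numbers off the Loewy series
\[
\Pf_j\colon\quad \Sf_j \;/\; (\Sf_{j'}\oplus \Sf_{j'}) \;/\; \Sf_j
\]
prescribed by the hypothesis that the paired projective $\Pf_j'$ has the same composition factors as $\Pf_j$ but arranged differently. This gives $\dim \Hom(\Pf_i, \Pf_j) = [\Pf_j:\Sf_i] = 2\delta_{ij} + 2\delta_{i,j'}$, with $\Pf_j'=\Pf_{j'}$ and $p_j'=p_{j'}$.

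Substituting these four values and using that $j\mapsto j'$ is an involution on the relevant indices (so $\sum_j p_{j'}p_j = \sum_j p_j p_j'$), the quadruple sum collapses to $\sum s_i^2 + 2\sum s_i p_i + 2\sum p_i^2 + 2\sum p_i p_i'$, which immediately rearranges as the claimed identity $\sum p_i^2 + \sum(s_i+p_i)^2 + 2\sum p_i p_i'$. The main obstacle will be less computational than structural: pinning down the Loewy layers of every projective indecomposable $\Pf_j$ indexed by $\Ir_k$, confirming that the pairing $i\leftrightarrow i'$ is a well-defined involution on the non-self-projective indices, and making sure the simples $\Sf_i=\VV(n,r)$ that coincide with their own projective covers are not double-counted when they appear in the decomposition.
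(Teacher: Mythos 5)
Your proposal is correct and follows essentially the same route as the paper: decompose $\VV^{\otimes k}$ into simple and projective indecomposable summands, compute the four types of $\Hom$-dimensions (the paper's Lemma~\ref{lem:dimhom}, using Schur's lemma, the socle/head structure of $\Pf(\ell,s)$ from Chen's results, and the identity $\dimm_\kk\Hom(\Pf_i,M)=[M:\Sf_i]$ as in Proposition~\ref{Hom}), and sum $s_i^2+2s_ip_i+2p_i^2+2p_ip_i'$ to obtain the stated formula. The caveat you flag about $\VV(n,s)$ being both simple and projective is handled in the paper by the convention that $p_i=0$ and $s_i$ records its multiplicity, exactly as you anticipate.
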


In this expression,  $s_i, p_i, p_i'$ represent the numbers of paths of length $k$ from $\VV$
 at level 1 in the Bratteli diagram to the summands $\Sf_i, \Pf_i, \Pf_i'$, respectively,  at level $k$ in the Bratteli diagram that is determined by $\VV$.
 

Using Theorem \ref{THM1.3} we prove our final main result.

\begin{theorem}{\rm(Theorem~\ref{doublecentralizer})} For any $n \ge 2$ and any $r \in \mathbb{Z}_n$, 
the algebra homomorphism $\pi: \mathsf{TL}_k(\xi)\to \End_{\Df_n}(\VV(2, r)^{\otimes k})$ is an isomorphism when $1\leq k\leq 2n-2$.
\end{theorem}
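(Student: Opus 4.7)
The approach is to reduce the isomorphism claim to a dimension count. By Theorem \ref{thm:TLalghom}, $\pi$ is injective for every $k \ge 2$ (and trivially an isomorphism when $k=1$, since both sides are one-dimensional). Hence it suffices to prove
\[
\dim_\kk \mathsf{TL}_k(\xi) \;=\; \dim_\kk \End_{\Df_n}\bigl(\VV(2,r)^{\ot k}\bigr)
\]
for $1 \le k \le 2n-2$. The left-hand side is the Catalan number $C_k = \frac{1}{k+1}\binom{2k}{k}$, a standard fact for the Temperley-Lieb algebra. The right-hand side is given by Theorem \ref{thm:dimCent} as
\[
\sum_{i \in \Ir_k} p_i^2 \;+\; \sum_{i \in \Ir_k} (s_i+p_i)^2 \;+\; 2 \sum_{i \in \Ir_k} p_i p_i',
\]
where $s_i, p_i, p_i'$ are path multiplicities in the Bratteli diagram for $\VV = \VV(2,r)$. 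So the task reduces to showing this sum equals $C_k$.

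I would split the argument at $k = n-1$. For $1 \le k \le n-1$, the fusion rules of $\Df_n$ produce only simple summands $\VV(\ell,s)$ with $\ell \le n$; no projective cover can appear, because tensoring a simple $\VV(\ell,s)$ with $\ell < n$ against $\VV(2,r)$ never forces an $\ell > n$ replacement. Thus $p_i = p_i' = 0$ and the formula collapses to $\sum_i s_i^2$. In this sub-range the $\Df_n$ Bratteli diagram coincides (under the correspondence of Section \ref{SS5}) with the generic $\Uq(\mathfrak{sl}_2)$ Bratteli diagram for $(\CC^2)^{\ot k}$, and classical Schur-Weyl duality for $\mathfrak{sl}_2$ gives $\sum_i s_i^2 = \sum_\lambda m_\lambda^2 = C_k$, where the sum is over two-row partitions $\lambda$ of $k$.

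For $n \le k \le 2n-2$, projective indecomposables enter the decomposition: once a path reaches the boundary node $\VV(n,\cdot)$ at level $n-1$ and is tensored again with $\VV(2,r)$, a projective cover $\Pf(\ell,s)$ replaces the would-be simple $\VV(\ell',\cdot)$ with $\ell' > n$. I would set up a bijection between paths in the $\Df_n$ diagram landing on projectives $\Pf_i$ or $\Pf_i'$ and pairs of generic $\Uq(\mathfrak{sl}_2)$ paths that would otherwise violate the $\ell \le n$ boundary via reflection. In the stated range no second wraparound occurs, so the correspondence is well defined. The combination $\sum p_i^2 + \sum (s_i+p_i)^2 + 2\sum p_i p_i'$ is engineered precisely to reassemble these generic multiplicities: the $(s_i+p_i)^2$ summand collects paths through the head of each indecomposable, the $p_i^2$ summand counts paths through its socle, and the cross-term $2 p_i p_i'$ pairs the two projective covers $\Pf_i$ and $\Pf_i'$ sharing composition factors arranged differently. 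Summed together these should again total $C_k$.

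The central difficulty is the combinatorial verification in this second sub-range: one must check that the $\Df_n$-to-generic path correspondence is a bijection that reproduces $\sum_\lambda m_\lambda^2 = C_k$ under the reweighting dictated by the formula. I expect this to succumb to an induction on $k$ using the explicit fusion rules of \cite{Chen99,Chen3} to track how new paths split among simple and projective summands at each step. Sharpness of the bound $k \le 2n-2$ is expected to manifest at $k = 2n-1$ as a second wraparound that introduces projective blocks no longer matched by the same bookkeeping, breaking the Catalan identity and, presumably, the surjectivity of $\pi$ itself.
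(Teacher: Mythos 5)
Your overall strategy is exactly the paper's: use the injectivity from Theorem~\ref{thm:TLalghom} to reduce to the dimension count $\dimm_{\kk}\TL_k(\xi)=\mathcal{C}_k=\dimm_{\kk}\End_{\Df_n}(\VV^{\ot k})$, handle $1\le k\le n-1$ by the semisimplicity of $\VV^{\ot k}$ and the coincidence of the two Bratteli diagrams, and handle $n\le k\le 2n-2$ by matching path counts in $\Gamma_n$ against path counts in the two-row-partition diagram $\Gamma$ via reflection across the critical line. However, there is a genuine gap: the entire content of the second range is the precise pair of identities $p_i^{(k)}=\wt{p}_i^{(k)}$ and $p_i^{(k)}+s_i^{(k)}=\wt{s}_i^{(k)}$ (Proposition~\ref{identifiedpathcount} in the paper), and you never establish them --- you only assert that a reflection bijection ``should'' exist and that an induction ``is expected to succumb.'' The paper's proof of these identities is a genuine induction on the row index with a base case at $k=n$ and a two-case analysis according to whether the row meets the first critical line, using the specific edge structure of the irregular diamonds (no northeast edge, doubled southwest edge). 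Without that verification the equality $\sum_i\wt{s}_i^2+\sum_i\wt{p}_i^2=\sum_i(s_i+p_i)^2+\sum_i p_i^2$ is unproven, and the theorem does not follow.

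A second, smaller issue: your description of how the three terms of Theorem~\ref{thm:dimCent} ``reassemble'' $\mathcal{C}_k$ treats the cross-term $2\sum_i p_ip_i'$ as an active participant pairing $\Pf_i$ with $\Pf_i'$. In the range $1\le k\le 2n-2$ that term is identically zero --- the two complementary projective covers $\Pf(\ell,s)$ and $\Pf(n-\ell,s+\ell)$ do not co-occur as summands of $\VV^{\ot k}$ until $k=2n$ (see the remark after Table~\ref{eq:Comp}) --- and its vanishing is precisely what is needed for the centralizer dimension to collapse to $\sum_i p_i^2+\sum_i(s_i+p_i)^2$ and match the Temperley--Lieb dimension. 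If your bookkeeping actually required a nonzero contribution from that term to reach $\mathcal{C}_k$, the count would fail; so this point needs to be stated and justified rather than folded into the heuristic. Your heuristic for why $k=2n-1$ is where things first break is also not quite right: the table shows the discrepancy at $k=2n-1$ comes from the doubled edge into $\VV(n,s+1)$ out of $\Pf(1,s)$ (the jump from multiplicity $1^2$ to $2^2$ on the leftmost diagonal), not from a cross-term.
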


The module $\VV^{\otimes k}$, $\VV =\VV(2, r)$,  is completely reducible when $ 1\leq  k\leq n-1$, resulting in a relatively simple argument to prove this theorem 
for such values of $k$ by comparing the dimensions of $\mathsf{TL}_k(\xi)$  and $\End_{\Df_n}(\VV^{\otimes k})$. In particular, both dimensions can be regarded as sums of squares of the number of paths in a certain Bratteli diagram $\Gamma$, which is essentially a truncated Pascal's triangle (see  Lemma~\ref{observations}).  

When $n\leq k\leq 2n-2$, $\VV^{\otimes k}$ is no longer completely reducible, and it decomposes into a direct sum of simple and indecomposable projective modules that yields a modified Bratteli graph $\Gamma_n$  (displayed in Section~\ref{bratt:gamma5} for $n=5$ (Figure 1) and compared to the Bratteli diagram  $\Gamma$ of partitions with at most 2 parts  (Figure 2)). Nevertheless, the path counts in $\Gamma_n$ can be identified with those in the graph $\Gamma$ (as in Proposition~\ref{identifiedpathcount}), although the dimension argument is more complex. Once the dimension match for $k \le 2n-2$ is obtained, the isomorphism result follows immediately from the injectivity statement. When $k>2n-2$, examples 
in Table \ref{eq:Comp} (see Example \ref{Brat5}) for $n=5$ show that the dimension of $\End_{\Df_n}(\VV^{\otimes k})$ is larger than that of $\mathsf{TL}_k(\xi)$, and therefore the map fails to be an isomorphism beyond $k=2n-2$.   

This result is analogous to the well-known double-centralizer property \cite{CP,GW} between the quantum group $\Uq(\mathfrak{sl}_2)$ and the Temperley-Lieb algebras on tensor powers of 
the natural $\Uq(\mathfrak{sl}_2)$-module $\CC^2$  when $\mathsf{q}\in \CC$ is generic.
The centralizer results for $\Df_n$ are similar to those for the small quantum group $\mathsf{u}_q(\mathfrak{sl}_2)$ at $q$ a root of unity
(which is a quotient of $\Df_n$) acting on its unique simple two-dimensional self-dual module.

\vspace{-.25cm}
\section{Preliminaries} \label{S2.1} 

\emph{Throughout this work,  $n$ is an integer $\ge 2$,  $\mathbb{k}$ is an algebraically closed field of characteristic zero,  and $q$ is a primitive $n$th root of unity in $\mathbb{k}$.  All tensor products are over $\kk$, and we adopt Sweedler's notation for the coproduct $\Delta$ applied to an element $x$ of a Hopf algebra,}  
$$\Delta(x) = \sum_{x} x_{(1)} \ot x_{(2)}.$$ 

\noindent
{\bf Modules for the Drinfeld double of a Taft algebra.} 
The Drinfeld double $\Df_n$  has
a presentation as  the Hopf algebra over $\mathbb{k}$ with generators
$a,b,c,d$ that satisfy the following relations:
\begin{align} 
\begin{split} ba = q ab, \qquad  db = qbd,  & \qquad bc = cb, \qquad
ca = q ac,  \qquad  dc = qcd,  
 \qquad   da-qad = 1-bc, \\
 & \hspace{.2cm}  a^n = 0 = d^n,  \quad \quad b^n = 1 = c^n.  \end{split}\end{align}
It is an algebra of dimension $n^4$, and the elements $a^i b^j c^k d^\ell$,  $0 \le i,j,k,\ell \le n-1$, 
 determine a basis for $\Df_n$.
The coproduct, counit, and antipode of $\Df_n$ are given by
\begin{align}
\begin{split}\label{eq:hopfstructure}  \Delta(a) = a \otimes b + 1 \otimes a, & \quad  \Delta(d) = d \otimes c + 1 \otimes d,  \\
\Delta(b) = b \otimes b, & \quad  \Delta(c) = c \otimes c, \\
\varepsilon(a) = 0 = \varepsilon(d), & \quad  \varepsilon(b) = 1 = \varepsilon(c),\\
S(a) = -ab^{-1}, \  \ S(b) = b^{-1}, & \quad
S(c) = c^{-1}, \  \ S(d) = -dc^{-1}.  
\end{split}\end{align} 
The Taft algebra $\Af_n$ is the Hopf subalgebra generated by $a$ and $b$,
and Drinfeld's process of doubling $\Af_n$ to get $\Df_n$ was shown in \cite{Chen99} to yield
the presentation above.    It follows from \eqref{eq:hopfstructure} and the
fact that $\Delta$ is an algebra homomorphism
that the elements $g = b^ic^k$ for $0 \le i,k \le n-1$ are grouplike, that is, 
$\Delta(g) = g \ot g$, $\varepsilon(g) = 1 \in \mathbb{k}$, and $S(g) = g^{-1} = g^{n-1}$. 

The simple $\Df_n$-modules $\VV(\ell,s)$  are indexed by a pair $(\ell,s)$ where
$\ell \in \{1,2,\dots,n\}$ and $s \in \mathbb{Z}_n =  \mathbb{Z}/n\mathbb{Z}$  (the integers modulo $n$).   Then  $\VV(\ell,s)$ is a $\mathbb k$-vector space of dimension $\ell$ with basis
$v_1,v_2, \dots, v_\ell$  and with $\Df_n$-action given  by
\begin{align}
\label{eq:action} 
\begin{split} a. v_j &= v_{j+1}, \  1 \leq j < \ell, \hspace{2.2cm} a.v_\ell = 0, \\
b.v_j &= q^{s+j-1}v_j,  \hspace{3.3cm}  c.v_j = q^{j-(s+\ell)}v_j, \ \ 1 \le j \le \ell, \\
d.v_j &= \alpha_{j-1}(\ell) v_{j-1},  \ 1 < j \le \ell,  \hspace{0.9cm}  d.v_1 = 0,\quad \text{where}\end{split}  \end{align}
\begin{equation}\alpha_i(\ell) = \frac{ \left(q^i - 1\right)\left(1-q^{i-\ell}\right)}{q-1} \qquad \text{for}\; 1 \le i \le n-1.
\end{equation}

From \cite{Chen2.5, Chen}, we know the following:  
\begin{enumerate} 
\item $\VV(1,0)$ is the trivial $\Df_n$-module with action given by the counit $\varepsilon$.
\item $\VV(\ell, s) \otimes \VV(1,r)  \cong \VV(\ell, r+s)$.    
\item $\VV(\ell,s) \otimes \VV(\ell',r)$ is completely reducible if and only if $\ell+\ell' \le n+1$. In this case,  if $m = \mathsf{min}(\ell,\ell')$, 
then  \vspace{-.65cm}

\begin{equation}\label{eq:tensdecomp}\VV(\ell,s) \otimes \VV(\ell',r) \cong \bigoplus_{j=1}^m \VV(\ell+\ell' + 1-2j, r+s + j-1).\end{equation}
\end{enumerate}

Let $\Pf(\ell,s)$ be the projective cover of $\VV(\ell,s)$ for $1 \le \ell < n$ and $s \in \ZZ_n$. Chen \cite{Chen2.75}  has shown that any indecomposable projective left $\Df_n$-module is isomorphic to one of the 
modules $\Pf(\ell,s)$ for $1\le \ell < n$ or to $\VV(n,s)$ for some $s \in \mathbb{Z}_n$, where the indecomposable module $\Pf(\ell,s)$  
has the following structure:  

There is a chain of submodules $\Pf(\ell,s)  \supset  \mathsf{soc}^2(\Pf(\ell,s)
\supset  \mathsf{soc}(\Pf(\ell,s)) \supset (0)$ such that 
\begin{enumerate}
\item  $\mathsf{soc}(\Pf(\ell,s))$ is the socle of $\Pf(\ell,s)$ (the sum of all the simple submodules), and  $\mathsf{soc}(\Pf(\ell,s))\cong \VV(\ell,s)$;
\item $\mathsf{soc}^2(\Pf(\ell,s))/\mathsf{soc}(\Pf(\ell,s)) \cong  \VV(n-\ell, s+\ell) \oplus \VV(n-\ell, s+\ell)$;
\item $\Pf(\ell,s) /\mathsf{soc}^2(\Pf(\ell,s) \cong \VV(\ell,s)$.
\end{enumerate} 
The projective indecomposable modules $\Pf(\ell,s)$ and $\Pf(n-\ell, s+\ell)$  have the same composition factors, but arranged differently.
The dimension of $\Pf(\ell,s)$ is $2n$ for $1 \le \ell <n$.  The  
modules $\VV(n,s)$  for $s\in \mathbb{Z}_n$  are the only $\Df_n$-modules  that are both simple and projective.

It follows from \eqref{eq:tensdecomp} and  
\cite[Prop.~3.1, Thms.~3.3 and 3.5]{Chen}  that  for the two-dimensional simple module $\VV = \VV(2,r)$, 
\begin{enumerate}\label{eq:tens}
\item $\VV(1,s) \ot \VV  \cong \VV(2,r+s)$;
\item  $\VV(\ell,s) \ot \VV  \cong  \VV(\ell+1,r+s) \oplus \VV(\ell-1,r+1+s)$\; for $2 \le \ell < n$;   
\item $ \VV(n,s) \ot \VV   \cong \Pf(n-1, r+1+s)$;   
\item $ \Pf(1,s) \ot \VV  \cong  \Pf(2,r+s) \oplus 2 \VV(n,r+1+s)$;
\item $\Pf(\ell,s) \ot \VV  \cong  \Pf(\ell+1,r+s) \oplus \Pf(\ell-1,r+1+s)$\; for $2 \le \ell < n-1$;   
\item $\Pf(n-1,s) \ot \VV  \cong  \Pf(n-2,r+1+s) \oplus 2 \VV(n,r+s)$.   
\end{enumerate}

In Section~\ref{SS5}, we will use these relations to determine the decomposition of $\VV^{\ot k}$ into simple and projective summands and 
to  relate the corresponding Bratteli diagram to that obtained from tensor powers of the natural two-dimensional module
for the quantum group $\Uq(\mathfrak{sl}_2)$, where $\mathsf{q}$ is generic.

\section{The ribbon structure on $\Df_n$}

The algebra $\Df_n$ belongs to the class of  \emph{quasitriangular} Hopf algebras.  The distinguishing feature of a quasitriangular
Hopf algebra is the existence of an R-matrix, which induces a module homomorphism on tensor products.   The R-matrix can be used to
construct central elements in the algebra, and some quasitriangular Hopf algebras admit a special type of central element termed a \emph{ribbon element},
which plays an essential role in constructing knot and link invariants.   Kauffman and Radford \cite[Prop. 7]{KauffR} have shown that $\Df_n$ has a unique ribbon element
whenever $n$ is odd.  However, the exact expression for this ribbon element has not been known.    In this section, we review needed results
on quasitriangular Hopf algebras and use them to determine an explicit expression for the unique ribbon element of $\Df_n$. When the field is algebraically closed, the ribbon element acts as a
scalar on the simple $\Df_n$-modules because it is central, and we also determine that scalar for each simple $\Df_n$-module in Theorem \ref{thm:ribbonelt}.  

\subsection{Background on quasitriangular Hopf algebras}\label{S3.1}

Throughout Section~\ref{S3.1}, we assume $\Hf$ is a finite-dimensional Hopf algebra over $\mathbb k$ with antipode $S$, counit $\varepsilon$, and coproduct $\Delta(x) = \sum_{x} x_{(1)} \ot x_{(2)}$ for $x \in \Hf$, and $\Hf^*$ is the $\mathbb{k}$-dual Hopf algebra.    

\medskip
\noindent
{\bf The quasitriangular property.} (See, for example, \cite[Section 10.1]{Mon} or \cite[Section 4.2]{CP}.)  A Hopf algebra $\Hf$ is \emph{quasitriangular} if there is an invertible element $\mathcal{R} \in \Hf \ot \Hf$ such that 
\begin{enumerate}
\item  $\mathcal{R} \Delta(x) \mathcal{R}^{-1} = \Delta^{\mathsf{op}}(x)$ for all $x \in \Hf$, where $\Delta^{\mathsf{op}}(x)$ has
 the tensor factors in $\Delta(x)$ interchanged, and
\item $(\Delta \ot \mathsf{id})(\mathcal{R}) = \mathcal{R}_{13}\mathcal{R}_{23}$,
\item $(\mathsf{id} \ot \Delta)(\mathcal{R}) = \mathcal{R}_{13}\mathcal{R}_{12}$,
\end{enumerate}
where if $\mathcal{R}= \sum_i x_i \ot y_i$, then  
$\mathcal{R}_{12} =  \sum_i x_i \ot y_i \ot 1$, \;  $\mathcal{R}_{13} =  \sum_i x_i \ot 1\ot y_i$, and
$\mathcal{R}_{23} =  \sum_i 1 \ot x_i \ot y_i$.  \; Let  $\mathcal{R}^{\mathsf{op}}= \sum_i y_i \ot x_i $.   

\medskip
\noindent
{\bf The element $u$.}  We assume $\mathcal{R}= \sum_i x_i \ot y_i$
 as above  and use the antipode $S$ to define 
\begin{equation}\label{eq:udef}  u = \sum_{i} S(y_i) x_i \in \Hf. \end{equation}

\vspace{-.2cm}
Then the following hold\\
 \centerline{$u x u^{-1} = S^2(x)$  for all $x \in \Hf$ \;\; and \;\;  $\Delta(u) = \left(\mathcal{R}^{\mathsf{op}} \mathcal{R}\right)^{-1}(u \ot u)$.}

\medskip
\noindent
{\bf A ribbon Hopf algebra.}  
 A quasitriangular Hopf algebra $\Hf$ is a \emph{ribbon} Hopf algebra if there is an invertible element $\upsilon$
(the \emph{ribbon element}) in the center of $\Hf$ such that
\begin{equation}\label{eq:ribdef} \upsilon^2 = u S(u), \quad S(\upsilon) = \upsilon, \quad \varepsilon(\upsilon) = 1, \quad   \Delta(\upsilon) = \left(\mathcal{R}^{\mathsf{op}} \mathcal{R}\right)^{-1} (\upsilon \ot \upsilon),\end{equation}
 where $u$ is as in \eqref{eq:udef}.  Then $\upsilon^{-1}u$ is grouplike:   $\Delta(\upsilon^{-1}u) = \upsilon^{-1}u \ot \upsilon^{-1}u$.
 
\medskip
\noindent
{\bf The tensor power centralizer algebra.}  Let $\VV$ be  a module over the quasitriangular Hopf algebra $\Hf$,  and assume 
$\mathsf{R} \in \End_{\Hf}(\VV^{\ot 2})$
 gives the action of $\mathcal R$ on $\VV^{\ot 2}$.     Suppose $\sigma: \VV^{\ot 2} \rightarrow \VV^{\ot 2}$ is the interchange map $\sigma(w \ot x) = x \ot w$, and set $\R= \sigma \mathsf{R} \in \End_{\Hf}(\VV^{\ot 2})$.  Assume in $\End_{\Hf}(\VV^{\ot k})$ that
\begin{equation}
\label{eq:Ri} 
\R_i := \mathsf{id}_\VV \ot \cdots \ot  \mathsf{id}_\VV \ot \R \ot  \mathsf{id}_\VV \ot \cdots \ot \mathsf{id}_\VV,
\end{equation}
for $1 \le i \le k-1$, where $\R$ occupies tensor slots $i$ and $i+1$.   Then the following hold (see, for example,  \cite[Prop. 2.18]{LR}):
\begin{itemize} 
\item{\rm(QT1)} $\R_i$ belongs to the centralizer algebra $\End_{\Hf}(\VV^{\ot k})$ of transformations on $\VV^{\ot k}$ commuting with the $\Hf$-action.
\item{\rm(QT2)} $\R_i \R_j = \R_j \R_i$,  \qquad for $| i-j| > 1$. 
\item{\rm(QT3)} $\R_i \R_{i+1}\R_i = \R_{i+1} \R_i \R_{i+1}$,  \qquad for $1 \le i \le k-2$. 
\end{itemize}
This tells us that the subalgebra of  $\End_{\Hf}(\VV^{\ot k})$ generated by the $\R_i$ is a homomorphic image of the
group algebra of the braid group on $k-1$ strands. 
     
\medskip
\noindent
{\bf Action of the ribbon element.}  Suppose  $\{\UU_\omega\}_{\omega \in \Omega}$ are the simple $\Hf$-modules for the ribbon Hopf algebra
$\Hf$.   Then since the field is algebraically closed,  the ribbon element $\upsilon$ acts as a scalar, 
$\upsilon_\omega$  on $\UU_\omega$ by Schur's lemma.   
\begin{itemize}
\item When $\UU_\mu \ot \UU_\nu$ is a completely reducible $\Hf$--module,  then $\mathcal{R}^{\mathsf{op}} \mathcal{R}$
acts on a simple summand $\UU_\omega$ of $\UU_\mu \ot \UU_\nu$ by the scalar
\begin{equation}\label{eq:Req2} \frac{\upsilon_\mu \,\upsilon_\nu}{\upsilon_\omega}.\end{equation}
\end{itemize} 
More details can be found in \cite{LR}.  

\medskip
\noindent
{\bf Integrals and quasiribbon elements.} To determine the ribbon element of $\Df_n$ in the next section,  we will 
use several well-known facts about integrals and quasiribbon elements for a finite-dimensional Hopf algebra $\Hf$. 
For $h \in \Hf$ and $\alpha$  in the dual space $\Hf^*$,  we define
$$\langle \alpha, h \rangle := \alpha(h) \in \mathbb{k}.$$
 The following results on integrals can be found, e.g., in \cite[Secs.~12.1.1, 12.1.2]{L} or \cite[Chap.~2]{Mon}:  
\begin{itemize}
\item  The right integrals and left integrals of $\Hf$ are respectively  
\begin{equation*}\qquad\quad \textstyle{\int_{\Hf}^r} = \{ \Lambda \in \Hf \mid \Lambda h = \langle \varepsilon,h\rangle \Lambda \; \text{for all}  \; h \in \Hf\} \; \text{and}\;
\textstyle{\int_{\Hf}^\ell} = \{ \Lambda' \in \Hf \mid h\Lambda' = \langle \varepsilon,h\rangle \Lambda' \;  \text{for all} \;  h \in \Hf\}.
\end{equation*}
\item These spaces are one-dimensional (see \cite[Thm.~10.9(b)]{L}) and are related by the antipode:
$$S(\textstyle{\int_{\Hf}^r}) = \textstyle{\int_{\Hf}^\ell}, \qquad S(\textstyle{\int_{\Hf}^\ell}) = \textstyle{\int_{\Hf}^r}.$$
 When $\textstyle{\int_{\Hf}^\ell} = \textstyle{\int_{\Hf}^r}$,  then $\Hf$ is said to be \emph{unimodular}.  
\item  Fix $\Lambda \ne 0$  in $\textstyle{\int_{\Hf}^r}$.  For every $h \in \Hf$, $h \Lambda \in \textstyle{\int_{\Hf}^r}$, hence $h \Lambda$ is a scalar multiple of
$\Lambda$.  This implies that there is an $\tilde \alpha \in \Hf^*$, such that $h \Lambda = \tilde\alpha(h)  \Lambda$, for all $h \in  \Hf$.   The element $\tilde \alpha$ is a grouplike element of $\Hf^*$, referred to as the \emph{distinguished grouplike element of $\Hf^*$.}  The condition that $\Hf$ is unimodular is equivalent to $\tilde \alpha = \varepsilon$. 
\end{itemize}

Analogously, the dual algebra $\Hf^*$ has a  \emph{right integral},  which we denote $\lambda$, and for any $h^* \in \mathsf{H}^*$,
\begin{equation*}
\lambda h^* =  h^*(1_\Hf) \lambda.
\end{equation*}
Corresponding to a nonzero right integral $\lambda \in \mathsf{H}^*$, there is a  \emph{distinguished grouplike element} $\tilde g \in \mathsf{H}$ such that for any $h^*\in \mathsf{H}^*$, 
\begin{equation*}
h^* \lambda =  h^*(\tilde g) \lambda.
\end{equation*}

 As above, assume the R-matrix is
$\mathcal R = \sum_i x_i \ot y_i$, and define
\begin{equation}\label{eq:ha} 
g_{\tilde \alpha} =\sum_i x_i\,\tilde\alpha(y_i), \quad \text{and} \quad 
 h_{\tilde \alpha} =g_{\tilde \alpha}\,\tilde g^{-1},   
\end{equation}
where $\tilde \alpha$ is the distinguished grouplike element of $\Hf^*$,  and  $\tilde g$ is the distinguished grouplike element of $\mathsf{H}$.
 
A \emph{quasiribbon} element of the Hopf algebra $\mathsf{H}$ is an element satisfying all the ribbon conditions in \eqref{eq:ribdef} except for the requirement that it be central. 
Our approach to finding an explicit formula for the ribbon element of $\Df_n$ is to use the following results from \cite{KauffR} on quasiribbon elements. 

\begin{theorem}{\rm \cite[Thm.~1]{KauffR}}\label{ribboncomputation1}  Suppose $h_{\tilde \alpha}'$ is any element of $\Hf$ such 
that $(h_{\tilde \alpha}')^2 = h_{\tilde \alpha}$,  i.e.  $h_{\tilde \alpha}'$ is any square root of  the element $h_{\tilde \alpha}$ in \eqref{eq:ha}.
Then
$\upsilon=u h_{\tilde \alpha}'$ is a quasiribbon element,  where $u$ is as in \eqref{eq:udef}.
\end{theorem}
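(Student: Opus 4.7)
The plan is to verify each of the four defining conditions of a quasiribbon element in \eqref{eq:ribdef} directly for $\upsilon = u h'_{\tilde\alpha}$, namely $\upsilon^2 = u S(u)$, $S(\upsilon) = \upsilon$, $\varepsilon(\upsilon) = 1$, and $\Delta(\upsilon) = (\mathcal{R}^{\mathsf{op}}\mathcal{R})^{-1}(\upsilon \otimes \upsilon)$. Before doing so I would assemble the structural identities on which the argument rests. By Drinfeld's theorem $g_{\tilde\alpha} = \sum_i x_i\,\tilde\alpha(y_i)$ is a grouplike element of $\Hf$, and since $\tilde g$ is grouplike by definition, so is $h_{\tilde\alpha} = g_{\tilde\alpha}\,\tilde g^{-1}$. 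Next I would record (or derive from the R-matrix axioms together with the definition of $h_{\tilde\alpha}$) Drinfeld's formula
\[
S(u) = u h_{\tilde\alpha}^{-1}, \qquad \text{equivalently} \qquad uS(u) = u^2 h_{\tilde\alpha}^{-1}.
\]
Together with the identities already appearing in the excerpt ($u x u^{-1} = S^2(x)$ and $\Delta(u) = (\mathcal{R}^{\mathsf{op}}\mathcal{R})^{-1}(u \otimes u)$), and with $\varepsilon(u) = 1$ and $S(h_{\tilde\alpha}) = h_{\tilde\alpha}^{-1}$, these are all the ingredients that will be needed.

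The heart of the proof is the verification of $\upsilon^2 = u S(u)$. Writing $\upsilon^2 = u h'_{\tilde\alpha}\, u\, h'_{\tilde\alpha}$ and using the conjugation identity $u h'_{\tilde\alpha} = S^2(h'_{\tilde\alpha})\, u$ gives $\upsilon^2 = S^2(h'_{\tilde\alpha})\, u^2\, h'_{\tilde\alpha}$, and one must show this equals $u^2 h_{\tilde\alpha}^{-1}$. The cleanest way is to take the square root inside the commutative subalgebra generated by $h_{\tilde\alpha}$, so that $h'_{\tilde\alpha}$ commutes with $u^2 h_{\tilde\alpha}^{\pm 1}$ as needed and $S^2$ acts on $h'_{\tilde\alpha}$ in a way compatible with $(h'_{\tilde\alpha})^2 = h_{\tilde\alpha}$. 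Under this control the product $S^2(h'_{\tilde\alpha})\,h'_{\tilde\alpha}$ collapses to $h_{\tilde\alpha}^{-1}$, and Drinfeld's identity $u^2 h_{\tilde\alpha}^{-1} = uS(u)$ closes the computation.

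The remaining three conditions are then routine. For $\varepsilon(\upsilon) = 1$ I combine $\varepsilon(u) = 1$ with $\varepsilon(h'_{\tilde\alpha})^2 = \varepsilon(h_{\tilde\alpha}) = 1$, choosing the sign of the square root so that $\varepsilon(h'_{\tilde\alpha}) = 1$. For $S(\upsilon) = \upsilon$, compute
\[
S(\upsilon) = S(h'_{\tilde\alpha})\,S(u) = S(h'_{\tilde\alpha})\, u\, h_{\tilde\alpha}^{-1},
\]
and use $S(h'_{\tilde\alpha}) = (h'_{\tilde\alpha})^{-1}$ (a consequence of $h_{\tilde\alpha}$ being grouplike together with the choice of square root) together with the conjugation identity to match $u h'_{\tilde\alpha}$. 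For the coproduct condition, combine $\Delta(u) = (\mathcal{R}^{\mathsf{op}}\mathcal{R})^{-1}(u \otimes u)$ with $\Delta(h'_{\tilde\alpha}) = h'_{\tilde\alpha} \otimes h'_{\tilde\alpha}$ (when $h'_{\tilde\alpha}$ is taken grouplike) and reassemble.

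The main obstacle is the existence and choice of a compatible square root $h'_{\tilde\alpha}$. For an arbitrary algebraic square root one must track carefully how $S$, $S^2$, and $\Delta$ act on it, and this is where the bulk of the bookkeeping lies. The clean situation — which will be used in the next section to pin down the ribbon element of $\Df_n$ for $n$ odd — is when $h_{\tilde\alpha}$ admits a \emph{grouplike} square root; this happens precisely when the group of grouplikes is $2$-divisible, which for $\Df_n$ (with grouplike group $\mathbb{Z}_n \times \mathbb{Z}_n$) holds exactly when $n$ is odd. Under this hypothesis all four verifications above reduce to transparent manipulations involving Drinfeld's formula $S(u) = uh_{\tilde\alpha}^{-1}$ and the grouplike nature of $h'_{\tilde\alpha}$.
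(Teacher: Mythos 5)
First, a point of reference: the paper does not prove this statement at all --- it is quoted verbatim (as \cite[Thm.~1]{KauffR}) and used as a black box in Remark~\ref{rem:Dn-ribbon} and Theorem~\ref{thm:ribbonelt}. So there is no in-paper proof to compare against; what you have written is an outline of Kauffman--Radford's own argument, and your overall architecture (show $g_{\tilde\alpha}$, hence $h_{\tilde\alpha}$, is grouplike; note that $u$ commutes with grouplikes because $S^2$ fixes them; feed everything into an identity expressing $u^{-1}S(u)$ in terms of $h_{\tilde\alpha}$) is the right one.

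However, the central computation as written does not close, because the key identity is quoted with the wrong inverse relative to the paper's normalization $h_{\tilde\alpha}=g_{\tilde\alpha}\,\tilde g^{-1}$ in \eqref{eq:ha}. If $h'_{\tilde\alpha}$ is grouplike then $S^2(h'_{\tilde\alpha})=h'_{\tilde\alpha}$ and $u$ commutes with it, so $\upsilon^2=u h'_{\tilde\alpha}u h'_{\tilde\alpha}=u^2(h'_{\tilde\alpha})^2=u^2h_{\tilde\alpha}$; your claim that $S^2(h'_{\tilde\alpha})h'_{\tilde\alpha}$ ``collapses to $h_{\tilde\alpha}^{-1}$'' directly contradicts the hypothesis $(h'_{\tilde\alpha})^2=h_{\tilde\alpha}$. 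Matching $\upsilon^2$ with $uS(u)$ therefore requires $S(u)=u\,h_{\tilde\alpha}$, not $S(u)=u\,h_{\tilde\alpha}^{-1}$; with your version of the formula you would instead prove that $u$ times a square root of $h_{\tilde\alpha}^{-1}$ is quasiribbon, which is a different statement. (A consistency check inside this paper: in Theorem~\ref{thm:ribbonelt} one has $h_\varepsilon=(bc)^{-1}$ and $\upsilon=u(bc)^{\frac{n-1}{2}}$, so $\upsilon^2=u^2(bc)^{-1}=u^2h_\varepsilon$, forcing $uS(u)=u^2h_\varepsilon$.) The same sign propagates into your verification of $S(\upsilon)=\upsilon$, which also fails as written. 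Separately, your framing of the grouplike issue as ``bookkeeping'' versus ``the clean situation'' understates the point: the axiom $\Delta(\upsilon)=\left(\mathcal{R}^{\mathsf{op}}\mathcal{R}\right)^{-1}(\upsilon\ot\upsilon)$ together with $\Delta(u)=\left(\mathcal{R}^{\mathsf{op}}\mathcal{R}\right)^{-1}(u\ot u)$ and the invertibility of $u$ \emph{forces} $\Delta(h'_{\tilde\alpha})=h'_{\tilde\alpha}\ot h'_{\tilde\alpha}$, so no amount of care with an arbitrary (non-grouplike) square root can rescue the statement; grouplikeness of the square root is a genuine hypothesis of Kauffman--Radford's Theorem~1, not a convenience. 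This is harmless for the application here, since $h_\varepsilon'=(bc)^{\frac{n-1}{2}}$ is grouplike, but your proof should either assume it or observe that it is forced.
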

\begin{corollary}{\rm \cite[Cor.~2]{KauffR}}\label{ribboncomputation2}
When $\mathsf{H}$ has odd dimension, its quasiribbon element is unique.
\end{corollary}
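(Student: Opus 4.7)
The plan is to reduce uniqueness of the quasiribbon element to a statement about the group $G(\mathsf{H})$ of grouplike elements of $\mathsf{H}$ and then invoke the Nichols--Zoeller theorem to exploit the hypothesis that $\dim\mathsf{H}$ is odd.

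First I would show that every quasiribbon element $\upsilon$ has the form $\upsilon = u g$ for some grouplike $g$. Since $\Delta$ is an algebra homomorphism, combining $\Delta(u) = (\mathcal{R}^{\mathsf{op}}\mathcal{R})^{-1}(u\ot u)$ from Section~\ref{S3.1} with the quasiribbon identity $\Delta(\upsilon) = (\mathcal{R}^{\mathsf{op}}\mathcal{R})^{-1}(\upsilon\ot\upsilon)$ yields
$$\Delta(u^{-1}\upsilon) \;=\; \Delta(u)^{-1}\Delta(\upsilon) \;=\; (u^{-1}\ot u^{-1})(\mathcal{R}^{\mathsf{op}}\mathcal{R})(\mathcal{R}^{\mathsf{op}}\mathcal{R})^{-1}(\upsilon\ot\upsilon) \;=\; u^{-1}\upsilon \ot u^{-1}\upsilon,$$
so $g := u^{-1}\upsilon$ lies in $G(\mathsf{H})$.

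Next, let $\upsilon_1 = u g_1$ and $\upsilon_2 = u g_2$ be two quasiribbon elements and set $h := \upsilon_1 \upsilon_2^{-1}$. Using $uxu^{-1} = S^2(x)$ together with the fact that $S^2$ is the identity on grouplikes (because $S(g) = g^{-1}$ there), the ratio simplifies to $h = u(g_1 g_2^{-1})u^{-1} = g_1 g_2^{-1} \in G(\mathsf{H})$. Substituting $\upsilon_1 = h \upsilon_2$ into $\upsilon_1^2 = u S(u) = \upsilon_2^2$ gives $h \upsilon_2 h = \upsilon_2$, i.e.\ $\upsilon_2 h \upsilon_2^{-1} = h^{-1}$, which by the same $S^2$-fixes-grouplikes identity converts to the purely group-theoretic relation $g_2 h g_2^{-1} = h^{-1}$ inside $G(\mathsf{H})$.

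Finally, $\kk G(\mathsf{H})$ is a Hopf subalgebra of $\mathsf{H}$, so by Nichols--Zoeller, $|G(\mathsf{H})|$ divides $\dim\mathsf{H}$ and is therefore odd. Conjugating the relation $g_2 h g_2^{-1} = h^{-1}$ once more by $g_2$ shows that $g_2^2$ centralizes $h$; on an odd-order group the squaring map is a bijection, so $g_2 \in \langle g_2^2\rangle$ centralizes $h$ as well, collapsing the relation to $h = h^{-1}$. Since $G(\mathsf{H})$ has no element of order~$2$, we conclude $h = 1$ and hence $\upsilon_1 = \upsilon_2$. The key conceptual step, and the one requiring the most care, is the opening reduction that each quasiribbon element factors as $u$ times a grouplike; once this bridge from the Hopf-algebraic axioms to the finite group $G(\mathsf{H})$ is built, Nichols--Zoeller together with an elementary odd-order group argument finishes the proof.
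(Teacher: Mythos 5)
Your argument is correct. Note that the paper itself offers no proof of this corollary --- it is quoted verbatim from Kauffman--Radford \cite[Cor.~2]{KauffR} --- so there is no internal proof to compare against; what you have written is a legitimate self-contained derivation. Each step checks out: $u^{-1}\upsilon$ is grouplike by the computation you give (the paper records the analogous fact for $\upsilon^{-1}u$); the identity $uxu^{-1}=S^2(x)$ together with $S^2=\id$ on grouplikes turns $h=\upsilon_1\upsilon_2^{-1}$ into the grouplike $g_1g_2^{-1}$ and converts $\upsilon_2 h\upsilon_2^{-1}=h^{-1}$ into $g_2hg_2^{-1}=h^{-1}$; and Nichols--Zoeller plus the odd-order group argument forces $h=1$. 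Your route differs slightly in emphasis from the original source: Kauffman--Radford first classify \emph{all} quasiribbon elements as $uh'$ where $h'$ ranges over grouplike square roots of $h_{\tilde\alpha}$ (the converse direction of their Theorem~1, which the paper's Theorem~\ref{ribboncomputation1} only states in one direction), and then observe that a grouplike element has at most one square root in the odd-order group of grouplikes. You instead compare two quasiribbon elements directly through the shared relation $\upsilon^2=uS(u)$, which buys you independence from the full classification at the cost of the short conjugation computation with $g_2$. Both proofs ultimately rest on the same two pillars --- reduction to the group of grouplikes and the divisibility of its order into $\dimm_{\kk}\Hf$ --- so the substance is the same.
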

   
\begin{remark}\label{rem:Dn-ribbon} Since the Drinfeld double $\Df_n$ of the Taft algebra $\Af_n$ has a unique ribbon element when $n$ is odd by  \cite[Prop.~7]{KauffR}, 
it is the only possible candidate for the quasiribbon element.  Moreover,   by \cite[Thm.~4]{Radford} or \cite[Prop.~3.4]{Chen},  $\Df_n$ is unimodular,
so that $\tilde \alpha = \varepsilon$. 
Thus for  $\Df_n$, the ribbon element $\upsilon=uh_{\tilde \alpha}'$, where $h_{\tilde \alpha}'$ is a square root of $h_{\tilde \alpha} = h_\varepsilon = g_\varepsilon \tilde g^{-1}$.
We will use this fact in the next section to compute $\upsilon$ explicitly and to determine its action on simple $\Df_n$-modules.  
\end{remark}

\subsection{Computation of the ribbon element in $\Df_n$}\label{S3.2} 

Throughout this section $n$ is an \emph{odd} integer $n \ge3$.
We combine the description of the ribbon element of $\Df_n$ in Remark \ref{rem:Dn-ribbon} with the next result
due to Radford to obtain an explicit expression for the ribbon element of $\Df_n$.       
We identify $\Df_n$ with $\Af_n^*\otimes \Af_n$, where $\Af_n$ is the Taft algebra, and $\Af_n^*$ is its dual,  and we assume that $\alpha_0$ and $g_0$ are the distinguished grouplike elements in $\Af_n^*$ and $\Af_n$, respectively.   Under these identifications, we have  

\begin{proposition}{\rm \cite[Cor.~7]{Radford}}\label{gplike}
The distinguished grouplike element in $\Df_n$ is given by $\alpha_0 \otimes g_0$.\end{proposition}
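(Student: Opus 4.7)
The plan is to characterize $\tilde g \in \Df_n$ directly from its defining property, namely that $h^*\lambda = h^*(\tilde g)\lambda$ for every $h^* \in \Df_n^*$ and every nonzero right integral $\lambda$ of $\Df_n^*$. Since both sides depend only on the one-dimensional line spanned by $\lambda$, it suffices to exhibit one explicit $\lambda$ in product form and then to compute left multiplication by an arbitrary $h^*$ on it.

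First, I would identify $\Df_n^*$ with $\Af_n \ot \Af_n^*$ as vector spaces, using the natural pairings coming from the presentation $\Df_n = \Af_n^{*\,\mathrm{cop}} \bowtie \Af_n$, and fix once and for all the conventions for left versus right integrals on each factor. Let $\Lambda_0 \in \Af_n$ be a nonzero right integral in $\Af_n$, and $\lambda_0 \in \Af_n^*$ a nonzero right integral in $\Af_n^*$; for the Taft algebra these are explicit, and by definition they satisfy
\[
h\,\Lambda_0 = \alpha_0(h)\,\Lambda_0 \quad\text{for all } h \in \Af_n,
\qquad
\phi\,\lambda_0 = \phi(g_0)\,\lambda_0 \quad\text{for all } \phi \in \Af_n^*.
\]

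Next, I would argue that under the identification above, $\lambda := \Lambda_0 \ot \lambda_0$ is a right integral of $\Df_n^*$. This is a standard consequence of the Drinfeld double construction: a tensor product of right integrals on the two factor Hopf algebras, once the appropriate $\mathrm{op}/\mathrm{cop}$ adjustments are applied, yields a right integral on the dual of the double. With $\lambda$ so chosen, an arbitrary element $h \ot \phi \in \Af_n \ot \Af_n^* \cong \Df_n^*$ acts on $\lambda$ essentially componentwise (up to a twist that one verifies is trivial at the level of the distinguished grouplike), giving
\[
(h \ot \phi) \cdot (\Lambda_0 \ot \lambda_0) \;=\; \alpha_0(h)\,\phi(g_0)\,(\Lambda_0 \ot \lambda_0) \;=\; \langle h \ot \phi,\; \alpha_0 \ot g_0 \rangle\, \lambda.
\]
Comparing this with $h^*\lambda = h^*(\tilde g)\,\lambda$ forces $\tilde g = \alpha_0 \ot g_0$, as required.

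The main obstacle is not the core computation, which collapses to the one-line pairing displayed above, but rather pinning down the several conventions in play. The Drinfeld double admits equivalent presentations that differ by $\mathrm{op}$ and $\mathrm{cop}$, and the identification of $\Df_n^*$ with $\Af_n \ot \Af_n^*$, together with the induced algebra multiplication that must be used to compute $h^* \cdot \lambda$, depends on those choices. I would therefore spend the bulk of the argument making these identifications explicit and verifying that $\Lambda_0 \ot \lambda_0$ really is a right integral of $\Df_n^*$ in the chosen conventions; once that is in place, the short pairing computation above yields $\tilde g = \alpha_0 \ot g_0$ immediately.
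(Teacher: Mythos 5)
First, note that the paper offers no proof of this proposition at all: it is imported verbatim as \cite[Cor.~7]{Radford}, so there is no internal argument to compare yours against. Your proposal is essentially a reconstruction of Radford's own argument, and the overall strategy is sound: since the coalgebra structure of $\Df_n = \Af_n^{*\,\mathrm{cop}}\bowtie \Af_n$ is just the tensor product coalgebra, the algebra $\Df_n^*$ is a tensor product of algebras, so its one-dimensional space of right integrals is spanned by a pure tensor of integrals from the two factors, and the distinguished grouplike element of $\Df_n$ can then be read off from a componentwise left-multiplication computation. That is the right skeleton, and your two displayed characterizations of $\alpha_0$ and $g_0$ agree with the conventions this paper uses (left multiplication on a right integral of $\Af_n$ defines $\alpha_0$; left convolution on a right integral of $\Af_n^*$ defines $g_0$).

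The genuine gap is that the part you defer --- ``the appropriate $\mathrm{op}/\mathrm{cop}$ adjustments'' and the ``twist that one verifies is trivial'' --- is not a formality here; it is the entire content of the proof, and as written your setup is not obviously consistent. Dualizing the tensor product coalgebra $\Af_n^{*\,\mathrm{cop}}\ot\Af_n$ gives $\Df_n^*\cong(\Af_n^{*\,\mathrm{cop}})^*\ot\Af_n^*$, and the first factor carries the \emph{opposite} multiplication of $\Af_n$. Consequently a right integral of that factor is a \emph{left} integral of $\Af_n$, not the right integral $\Lambda_0$ you chose, and left multiplication by $h$ in the opposite algebra is right multiplication by $h$ in $\Af_n$, which evaluates a left integral against $\alpha_0^{-1}$ rather than $\alpha_0$. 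Since the Taft algebra is not unimodular, its left and right integrals genuinely differ and $\alpha_0=c\neq\varepsilon$, $g_0=b\neq 1$ (Lemma \ref{grouplikesinAanddual}), so none of these discrepancies can be absorbed; whether one lands on $\alpha_0\ot g_0$ or an inverse-twisted variant is decided precisely by the bookkeeping you postpone. To close the argument you would need to fix Radford's presentation of the double explicitly, exhibit the integral of $\Df_n^*$ in that presentation (for $\Af_n$ the candidates are explicit monomials, as in Lemma \ref{grouplikesinAanddual}), and carry out the evaluation with the correct sides; only then does the pairing computation legitimately force $\tilde g=\alpha_0\ot g_0$.
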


To understand this element and its relation to the ribbon element of $\Df_n$ more precisely,  we need a better grasp of the isomorphism identifying $\Df_n$ and
$\Af_n^* \ot \Af_n$, which is detailed in the following results of Chen.  (In comparing the statements below with the results of \cite{Chen99}, it is helpful to note
that our $\Af_n^*$  is $A_n(q)$, and our $\Af_n$ is $H_n(q)$ in the notation of \cite{Chen99}.)
The expressions for the coproduct in \eqref{eq:coproductab} and \eqref{eq:coproductcd} below require the quantum binomial coefficient
$${m \brack i}=\frac{[m]!}{[i]!\,[m-i]!}\, ,$$ 
for $m,i \in \ZZ_{\ge 0}$, with $m \ge i$.  Here $[m]!$ is the quantum factorial $[m]!=[m][m-1]\cdots[1]$,  where $[m]$
is the quantum integer $[m]=1+q+\cdots+q^{m-1}$ for $m \ge 1$, 
and $[0] = [0]! = 1$.
  
\begin{itemize}
\item [{\rm (i)}]  {\rm\cite[Thm.~3.3]{Chen99}}\label{Dnasdouble}
\emph{As a vector space $\Df_n=\Af_n^*\otimes \Af_n$. Explicitly,
\begin{align*}
a=1\otimes A, \hspace{.2 in} b=1\otimes B, \hspace{.2 in} c=C\otimes 1, \hspace{.2 in} d=D\otimes 1,
\end{align*}
where $A$ and $B$ are generators of $\Af_n$, $C$ and $D$ are generators of $\Af_n^*$.}
\item [{\rm (ii)}]  \emph{The product in $\Af_n^*\otimes \Af_n$ is given by
\begin{align}
(x\otimes X)(y\otimes Y)=\sum \tau(y_{(1)},X_{(1)}) xy_{(2)}\otimes X_{(2)}Y \tau^{-1}(y_{(3)},X_{(3)}). \label{bowtie}
\end{align}
In this expression, $\tau: \Af_n^*\otimes \Af_n \to \mathbb{k}$ is the skew bilinear form of \cite[Lemma~3.2]{Chen99} which satisfies axioms (SP.1) - (SP.4) of \cite[Sec. 1]{Chen99}.
In particular, $\tau(1,1)=\tau^{-1}(1,1)=1$.}
\item [{\rm (iii)}] {\rm\cite[Lemma 2.7]{Chen99}}  \emph{The following relations hold for the coproduct on $\Df_n$.
\begin{align}
\Delta(a^mb^w)&= \sum_{i=0}^m {m \brack i} a^ib^w \otimes a^{m-i}b^{w+i},  \label{eq:coproductab}\\
\Delta(c^md^w)&= \sum_{i=0}^w {w \brack i} c^md^i \otimes c^{m+i}d^{w-i}. \label{eq:coproductcd}
\end{align}}
\end{itemize}

Recall that multiplication $f\ast g$ in the dual algebra is given by 
\begin{align}\label{eq:dualprod}(f\ast g)(x)= \langle f\ast g, x \rangle = \sum_x  f(x_{(1)})\,g(x_{(2)})
\end{align}
for $f,g \in \Af_n^*$, $x \in \Af_n$.  Because of the identifications in (i) above, we will not distinguish between $a$ and $1\otimes A$ as a generator for $\Af_n$, and similarly for the other generators. Technically speaking, the symbols in the next lemma should be understood as their respective capitalized versions.
For a monomial basis element $a^ib^j \in \Af_n$, denote its dual basis element in $\Af_n^*$ by $(a^ib^j)^*$, and define $(c^id^j)^*$ similarly.

\begin{lemma}\label{grouplikesinAanddual} 
\begin{itemize} 
\item[{\rm (a)}]  $(a^{n-1}b)^*$ is a right integral in $\Af_n^*$, and the distinguished grouplike element of $\Af_n$  is $b$.  
\item[{\rm (b)}] $(cd^{n-1})^*$ is a right integral in $(\Af_n^*)^*$, and the distinguished grouplike element of $\Af_n^*$ is $c$.
\end{itemize}
\end{lemma}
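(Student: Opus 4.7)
The plan is to verify both assertions by direct dualisation, using the fact that the product in $\Af_n^*$ (and similarly in $(\Af_n^*)^*$) is defined by the rule \eqref{eq:dualprod} and the coproduct formulas \eqref{eq:coproductab}, \eqref{eq:coproductcd}. Since $\{a^m b^w : 0 \le m, w \le n-1\}$ is a basis of $\Af_n$, it suffices to check the defining relation for a right integral on each basis element, and similarly for the distinguished grouplike element.

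For part (a), I would take an arbitrary $f \in \Af_n^*$ and compute
$$((a^{n-1}b)^* \ast f)(a^m b^w) \;=\; \sum_{i=0}^m {m \brack i}\,(a^{n-1}b)^*(a^i b^w)\, f(a^{m-i} b^{w+i})$$
using \eqref{eq:coproductab}. The factor $(a^{n-1}b)^*(a^i b^w)$ forces $i=n-1$ and $w=1$, which together with $i\le m\le n-1$ forces $m=n-1$; the surviving term then contributes $f(b^n)=f(1)$, so $(a^{n-1}b)^*\ast f = f(1_{\Af_n})(a^{n-1}b)^* = \varepsilon_{\Af_n^*}(f)(a^{n-1}b)^*$, confirming the right-integral property. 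Running the same calculation for $f\ast (a^{n-1}b)^*$ forces $m-i=n-1$ and $w+i\equiv 1\pmod n$ together with $0\le i\le m\le n-1$; the only surviving term is $(m,w,i)=(n-1,1,0)$, yielding $f(b)$. Thus $f\ast (a^{n-1}b)^* = f(b)\,(a^{n-1}b)^*$, which by definition identifies $b$ as the distinguished grouplike element of $\Af_n$.

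For part (b), I would run the identical argument on $\Af_n^*$ using \eqref{eq:coproductcd} in place of \eqref{eq:coproductab}. Evaluating $(cd^{n-1})^*\ast f$ on $c^m d^w$ collapses the sum to a single nonzero term forced by $m=1$, $i=n-1$, and hence $w=n-1$, yielding $f(c^n d^{w-(n-1)}) = f(1)$ when $w=n-1$. Evaluating $f\ast (cd^{n-1})^*$ instead forces $w-i=n-1$, hence $w=n-1$ and $i=0$, and then $m\equiv 1\pmod n$, so $m=1$; the surviving term is $f(c)$, giving $f\ast (cd^{n-1})^* = f(c)(cd^{n-1})^*$, as required.

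The only real bookkeeping issue, and what I expect to be the main place to be careful, is the mod-$n$ arithmetic in the exponents of $b$ (respectively $c$) together with the range restrictions $0\le i\le m\le n-1$: one must verify that the Vandermonde-style sum actually collapses to a single term, not several, so that the coefficient one reads off is indeed $\varepsilon(f)$ or $f$ evaluated on the claimed grouplike. Once this bookkeeping is done, the identifications of $(a^{n-1}b)^*$ and $(cd^{n-1})^*$ as right integrals, and of $b$ and $c$ as the corresponding distinguished grouplike elements, follow immediately.
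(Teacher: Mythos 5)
Your proposal is correct and follows essentially the same route as the paper: both parts are verified by direct computation with the dual product \eqref{eq:dualprod} and the coproduct formulas \eqref{eq:coproductab}--\eqref{eq:coproductcd}, collapsing the sums via the constraints $0\le i\le m\le n-1$ and the mod-$n$ exponents of $b$ (resp.\ $c$), exactly as you anticipate. The only cosmetic differences are that you test the right-integral condition against an arbitrary $f\in\Af_n^*$ where the paper uses the equivalent element-level criterion $\lambda(x)\,1_{\Af_n}=\sum_x\lambda(x_{(1)})\,x_{(2)}$, and you test the grouplike condition against a general $f$ where the paper runs the same computation on the dual basis elements $(a^sb^t)^*$.
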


\begin{proof} We use the following characterization of a right integral of $\Hf^*$  given in \cite[(12.2)]{L} with $\Hf = \Af_n$:
\begin{align} \lambda \in \textstyle{\int_{\Af_n^*}^r}  &\iff   \lambda(x) \,1_{\Af_n} =  \sum_x   
\lambda(x_{(1)})\,x_{(2)} \qquad \text{for all} \; \,  x \in \Af_n.  \label{eq:rinteq}  
\end{align}
{\rm (a)} For $\lambda = (a^{n-1}b)^*$ and $x = a^m b^w$, we have
\begin{align*} \big\langle (a^{n-1}b)^*, a^m b^w\big\rangle\, 1_{\Af_n} = \delta_{n-1,m}\, \delta_{1,w} \,1_{\Af_n} = \begin{cases} 1_{\Af_n} & \text{if} \;
m = n-1\; \text{and}\; w = 1 \, (\modd n),\\
0 & \; \text{otherwise}. \end{cases}
\end{align*}
By \eqref{eq:coproductab}, the expression on the right-hand side of \eqref{eq:rinteq} is
\begin{align*}  \sum_{i=0}^m {m \brack i}  \big \langle (a^{n-1}b)^*,a^ib^w \big \rangle \otimes a^{m-i}b^{w+i}.
\end{align*}
This is zero unless $i=n-1=m$ and $w = 1 \, (\modd n)$,  in which case $a^{m-i}b^{w+i} = a^0b^0 = 1_{\Af_n}$. 
Thus,  $(a^{n-1}b)^*$ is a right integral of $\Af_n^*$  by \eqref{eq:rinteq}.

To determine the distinguished grouplike element associated to $(a^{n-1}b)^*$, we use \eqref{eq:coproductab} and \eqref{eq:dualprod} to compute  
\begin{align*}
 \big\langle (a^sb^t)^* * (a^{n-1}b)^*, a^mb^w\big\rangle
=& \sum_{i=0}^m{m \brack i} \big \langle (a^sb^t)^*,a^ib^w\big \rangle \,\,\big \langle(a^{n-1}b)^*, a^{m-i}b^{w+i}\big\rangle \\
=&{m \brack s} \big\langle (a^sb^t)^*, a^sb^w\big\rangle\,\, \big \langle (a^{n-1}b)^*, a^{m-s}b^{w+s}\big\rangle \\
=& {m \brack s}\, \delta_{t,w}\,\delta_{n-1,m-s}\delta'_{1,w+s},
\end{align*}
where in the Kronecker delta $\delta'_{1,w+s}$ the term $w+s$ should be interpreted mod $n$.
This expression is zero, unless $w =1-s \, (\modd n)$ and  $m=n-1+s$. Therefore,  $(a^sb^t)^* * (a^{n-1}b)^*=0$ if $s\geq 1$, and when $s=0$, 
\begin{align*}
(b^t)^* * (a^{n-1}b)^*=\delta_{t,1}\,(a^{n-1}b)^*.
\end{align*}
Consequently, for $0 \le s \le n-1$, and $t \in \ZZ_n$, 
\begin{equation*}
(a^sb^t)^* * (a^{n-1}b)^* =  \langle(a^sb^t)^*, b \rangle \,(a^{n-1}b)^* = \begin{cases} 0  & \; \text{if}\;\, (s,t) \ne (0,1), \\
1 & \; \text{if}\;\, (s,t) = (0,1). \end{cases}
\end{equation*}
This shows that $b$ is the distinguished grouplike element in $\Af_n$.  

Part (b) follows by a similar argument using \eqref{eq:coproductcd}.
\end{proof}

Let  $\alpha_0$ and $g_0$  be  the distinguished grouplike elements defined prior to Proposition~\ref{gplike}. The previous lemma implies that $\alpha_0=C$ and $g_0=B$, under the identifications made in (i) of Section~\ref{S3.2}. Therefore $C\otimes B$ is the distinguished grouplike element in $\Df_n$ by Proposition~\ref{gplike}.

 Recall that the R-matrix $\mathcal{R}=\sum_{i} x_i \ot y_i$ gives rise to the element $u = \sum_i S(y_i)x_i$
 in Section~\ref{S3.1}.  We will use the explicit expression 
 for the R-matrix of the quasitriangular Hopf algebra $\Df_n$ in 
\cite[Thm.~3.6]{Chen99}:
\begin{align}\label{eq:Rmat}
\mathcal{R}=\frac{1}{n} \sum_{m,s,t=0}^{n-1} \frac{q^{-tm}}{[s]!}a^sb^t \otimes c^md^s.
\end{align} 
 
\begin{theorem}\label{thm:ribbonelt} Assume that $n$ is an odd integer $n \geq 3.$\begin{itemize}   \item [{\rm (a)}]
The unique ribbon element in $\Df_n$ is
\begin{align*}
\upsilon=u\,\bc, \quad \text{where}
\end{align*}
\begin{align}
u 
\; &= \;  \frac{1}{n} \sum_{m,s,t=0}^{n-1} \frac{q^{\frac{s(s-1)}{2} -t(m+s)}}{[s]!}\, (-1)^s \,d^s c^{-s-m}b^ta^s. \label{eq:uexplicit}
\end{align} 
\item [{\rm (b)}] The ribbon element  $\upsilon$ of $\Df_n$ acts on any simple $\Df_n$-module  $\VV(\ell,r)$ by the scalar
\begin{align*}
q^{r(r+\ell-1)+\frac{1}{2}(n-1)(\ell-1)} \qquad \text{for \ $1 \le \ell \le n$ and $r \in \ZZ_n$}.
\end{align*}
\end{itemize}
\end{theorem}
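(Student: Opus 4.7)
The plan is to exploit Remark \ref{rem:Dn-ribbon}, which reduces the construction of the ribbon element of $\Df_n$ to two ingredients: (i) a square root $h'_{\tilde\alpha}$ of $h_{\tilde\alpha} = g_\varepsilon\, \tilde g^{-1}$, and (ii) an explicit formula for $u = \sum_i S(y_i) x_i$ obtained from the R-matrix \eqref{eq:Rmat}. Uniqueness and centrality then come essentially for free: by Theorem \ref{ribboncomputation1} the element $u\,h'_{\tilde\alpha}$ is a quasiribbon element, by Corollary \ref{ribboncomputation2} (since $\dim \Df_n = n^4$ is odd) the quasiribbon element is unique, and $\Df_n$ already has a unique ribbon element by \cite[Prop.~7]{KauffR}, so these two elements must coincide.

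For (i), I would first apply $\varepsilon$ to the right tensor factor of \eqref{eq:Rmat}. Because $\varepsilon(c^m d^s) = \delta_{s,0}$ and $\sum_{m=0}^{n-1} q^{-tm} = n\,\delta_{t,0}$, the double sum collapses to $g_\varepsilon = 1$. The identification $\tilde g = C \otimes B = bc$ is already in hand from Proposition \ref{gplike} and Lemma \ref{grouplikesinAanddual}, so $h_{\tilde\alpha} = (bc)^{-1} = (bc)^{n-1}$; since $n$ is odd, $b,c$ commute, and $(bc)^n = 1$, the element $\bc$ squares to $(bc)^{n-1}$, giving $h'_{\tilde\alpha} = \bc$ and $\upsilon = u\,\bc$.

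For (ii), starting from $u = \sum_i S(y_i) x_i$ and \eqref{eq:Rmat}, I would compute $S(c^m d^s) = S(d^s)\,S(c^m)$ using the antipode formulas in \eqref{eq:hopfstructure}. An easy induction on $s$, based on $c^{-1}d = q\,dc^{-1}$, yields $(-dc^{-1})^s = (-1)^s q^{s(s-1)/2}\, d^s c^{-s}$ and hence $S(c^m d^s) = (-1)^s q^{s(s-1)/2}\, d^s c^{-s-m}$. Reordering $a^s b^t = q^{-st} b^t a^s$ via $ba = qab$ (and noting that $c$ commutes with $b$) then puts $u$ into the form \eqref{eq:uexplicit}, with the combined exponent $s(s-1)/2 - t(m+s)$.

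For part (b), centrality of $\upsilon$ together with Schur's lemma (over the algebraically closed field $\mathbb k$) guarantees that $\upsilon$ acts as a scalar on the simple module $\VV(\ell,r)$, so it suffices to evaluate it on the single vector $v_\ell$, which is annihilated by $a$. This collapses \eqref{eq:uexplicit} to its $s=0$ summand: $u.v_\ell = \frac{1}{n}\sum_{m,t=0}^{n-1} q^{-tm}\, c^{-m} b^t . v_\ell$. Substituting $b^t.v_\ell = q^{t(r+\ell-1)} v_\ell$ and $c^{-m}.v_\ell = q^{mr} v_\ell$ and applying $\sum_{m=0}^{n-1} q^{m(r-t)} = n\,\delta_{t,r}$ reduces this to $u.v_\ell = q^{r(r+\ell-1)} v_\ell$. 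Finally $bc.v_\ell = q^{\ell-1} v_\ell$ gives $\bc.v_\ell = q^{(n-1)(\ell-1)/2} v_\ell$, and multiplying produces the asserted scalar. The main obstacle is the bookkeeping in step (ii): the several $q$-contributions (from $S(d)^s$, from the R-matrix prefactor $q^{-tm}$, and from the commutation $a^sb^t = q^{-st} b^t a^s$) must combine into exactly $s(s-1)/2 - t(m+s)$; part (b) is then short once one picks the vector $v_\ell$.
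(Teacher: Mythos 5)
Your proposal is correct and follows essentially the same route as the paper's proof: compute $g_\varepsilon=1$ by collapsing the R-matrix sum under $\varepsilon$, identify $\tilde g=bc$ via Proposition \ref{gplike} and Lemma \ref{grouplikesinAanddual}, take the square root $(bc)^{\frac{n-1}{2}}$ of $(bc)^{n-1}$, derive \eqref{eq:uexplicit} from $(-dc^{-1})^s=(-1)^sq^{s(s-1)/2}d^sc^{-s}$ and $a^sb^t=q^{-st}b^ta^s$, and evaluate on $v_\ell$ for part (b). The only cosmetic difference is that you justify the choice of square root by uniqueness of the quasiribbon element while the paper notes that $h_\varepsilon$ has odd order so its square root is unique; both are valid.
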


\begin{proof}
\noindent (a) We adopt the previous conventions and set $g_{\tilde \alpha}=g_{\varepsilon}$,  (which holds as  $\Df_n$ is unimodular). 
Now combining \eqref{eq:Rmat}  and \eqref{eq:ha}, we have
\begin{equation} g_{\varepsilon} =\frac{1}{n} \sum_{m,s,t=0}^{n-1} \frac{q^{-tm}}{[s]!}a^sb^t \,\varepsilon(c^md^s).
\end{equation}
Since $\varepsilon(d)=0$ and $\varepsilon(c)=1$,  and $\varepsilon$ is an algebra homomorphism, only the terms  with $s=0$ survive, and therefore 
\begin{align*}
g_{\varepsilon}&= \frac{1}{n}\sum_{m,t=0}^{n-1} q^{-tm} b^t=\frac{1}{n}\sum_{t=0}^{n-1}\left (\sum_{m=0}^{n-1} q^{-tm}\right) b^t.
\end{align*}
Observe that 
\begin{align*}
\sum_{m=0}^{n-1} q^{-tm}=\frac{1-(q^{-t})^n}{1-q^{-t}}=0
\end{align*}
unless $t=0$, in which case $\sum_{m=0}^{n-1}q^{-tm}=n$. Therefore $g_{\varepsilon}=1_{\Df_n}$.

By the discussion prior to this theorem, the distinguished grouplike element  in $\Df_n\simeq \Af_n^{*}\otimes \Af_n$ is $\tilde g=C\otimes B$. We aim to show that this is in fact $cb$. The expression for the product in $\Df_n\simeq \Af_n^{*}\otimes \Af_n$ in (\ref{bowtie}) when applied to $cb$ reduces to a single term:
\begin{align*}
cb=(C\otimes 1)(1\otimes B)= \tau(1,1)C\otimes B \tau^{-1}(1,1)=C\otimes B=\tilde g.
\end{align*}
This is mainly due to the fact that $\Delta(1)=1\otimes 1$,  and the coproduct factors of $\Delta^2(1)$ are  $1_{(1)}=1_{(2)}=1_{(3)}=1$.

By (\ref{eq:ha}), $h_\varepsilon= g_{\varepsilon}\tilde{g}^{-1}=(bc)^{-1} = (bc)^{n-1}$. When $n$ is odd, the square root $h_\varepsilon'$ of $h_\varepsilon$ is unique,  because $h_\varepsilon$, and therefore $h_\varepsilon'$,  has odd order. Thus,   
\begin{equation}\label{eq:ribbon}
h_\varepsilon' = (bc)^{\frac{n-1}{2}} \quad \text{and} \quad \upsilon=uh_\varepsilon' = u\,\bc
\end{equation} 
by Theorem~\ref{thm:ribbonelt},  as asserted in part (a).   

It remains to show that $u$ has the expression in \eqref{eq:uexplicit}.  Recall that
$u=\sum_i S(y_i)x_i$, where $\mathcal{R}=\sum_i x_i\otimes y_i = \frac{1}{n} \sum_{m,s,t=0}^{n-1} \frac{q^{-tm}}{[s]!}\,a^sb^t \otimes c^m d^s.$  Thus,  
\begin{align*}
u \;  &= \;  \frac{1}{n} \sum_{m,s,t=0}^{n-1} \frac{q^{-tm}}{[s]!}S(c^m d^s) a^sb^t = 
\frac{1}{n} \sum_{m,s,t=0}^{n-1} \frac{q^{-tm}}{[s]!}(-dc^{-1})^s c^{-m} a^sb^t \notag \\
\; &= \;  \frac{1}{n} \sum_{m,s,t=0}^{n-1} \frac{q^{\frac{s(s-1)}{2}-tm}}{[s]!} (-1)^s\,d^s c^{-s-m}a^sb^t = 
\frac{1}{n} \sum_{m,s,t=0}^{n-1} \frac{q^{\frac{s(s-1)}{2}-t(m+s)}}{[s]!} (-1)^s\,d^s c^{-s-m}b^ta^s.
\end{align*}  
As a result, (a) holds.  

(b) Since $\upsilon$ is central and the field is assumed to be algebraically closed, $\upsilon$ acts on any simple module $\VV(\ell,r)$ by a scalar.   
It suffices to compute the action of $\upsilon$ on any vector of $\VV(\ell,r)$, in particular, on the last basis element  $v_{\ell}$, and for this we have
\begin{align*}
bc.v_{\ell}=q^{r+\ell-1}q^{-r-\ell+\ell}v_{\ell}=q^{\ell-1}v_{\ell}. 
\end{align*}
Note that in the expression  \eqref{eq:uexplicit} for $u$,  all the summands act on $v_{\ell}$ as zero except for the terms with $s=0$. The scalar action of $u$ on $v_{\ell}$ reduces to the action of 
$$\overline{u}=\frac{1}{n} \sum_{m,t=0}^{n-1} q ^{-mt } c^{-m}b^t$$
on $v_\ell$, and the scalar determined by $\overline u$ is  
\begin{align*}
\frac{1}{n}\sum_{m,t=0}^{n-1} q^{-mt} q^{-m(-r)}q^{(r+\ell-1)t} 
 \; &=\; \frac{1}{n} \sum_{t=0}^{n-1} q^{t(r+\ell-1)}\,\sum_{m=0}^{n-1}q^{m(r-t)}=q^{r(r+\ell-1)}.
\end{align*}
The last equality uses the fact that $\sum_{m=0}^{n-1}q^{m(r-t)}=0$ unless $t=r$, and when $t=r$ its value is equal to $n$.
 Part (b) follows from multiplying the scalars for the action of $u$ and $(bc)^{\frac{n-1}{2}}$ on $v_\ell$.  \end{proof}

 \section{Temperley-Lieb actions}\label{SS4} 
 
We continue to assume that $\mathbb k$ is an algebraically closed field of characteristic zero,  and  take $q \in \mathbb k$ to be a primitive $n$th root of unity for any $n \ge 2$.  For $k \geq 2$, we describe an action of the Iwahori-Hecke algebra $\mathsf{H}_k(q)$
on $\VV(2,r)^{\ot k}$ for $r \in \ZZ_n$, that comes from the R-matrix of $\Df_n$.    We show that this action factors 
through a Temperley-Lieb algebra.   Section 5 will continue our focus on tensor powers of $\VV(2,r)$, $r \in \ZZ_n$.    
 
\subsection{Action of the Iwahori-Hecke algebra}
\label{S4.1} 

Assume $k$ is an integer $\geq 2$. The group algebra of the braid group on $k-1$ strands has generators $\sfs_1$, $\sfs_2$, \dots, $\sfs_{k-1}$ over $\mathbb k$ subject to relations (R1) and (R2) below.  The  \emph{Iwahori-Hecke algebra} (of type A) is the associative algebra  $\mathsf{H}_k(q)$ over $\mathbb k$ that is the quotient of the group algebra of the braid group by the additional relation (R3) below:
\begin{align}
&\sfs_i \sfs_j=\sfs_j\sfs_i \quad \hspace{1.2 in} |i-j|>1, \tag{R1}\label{R1}\\
&\sfs_i \sfs_{i+1}\sfs_i=\sfs_{i+1}\sfs_i\sfs_{i+1}\quad  \hspace{.3 in} \; 1\leq i\leq k-2,  \tag{R2} \label{R2}\\
&(\sfs_i-1)(\sfs_i+q^{-1})=0\quad \hspace{.3 in} 1\leq i\leq k-1 \label{R3}. \tag{R3}
\end{align}
Let $\sft_i=q^{\frac{1}{2}}(\sfs_i-1)$, and fix the parameter $\xi=-(q^{\frac{1}{2}}+q^{-\frac{1}{2}})$. It is straightforward to check that 
$\mathsf{H}_k(q)$ has an alternative presentation with generators $\sft_1,\sft_2,\dots,\sft_{k-1}$ subject to relations
\begin{align}
&\sft_i\sft_j=\sft_j\sft_i \hspace{1.75 in} |i-j|>1, \tag{R1'}\label{R1'}\\
&\sft_i\sft_{i+1}\sft_i-\sft_{i}=\sft_{i+1}\sft_i\sft_{i+1}-t_{i+1}\quad \;\;1\leq i\leq k-2,  \tag{R2'} \label{R2'}\\
&\sft_i^2=\xi \sft_i \hspace{1.75 in} 1\leq i\leq k-1 \label{R3'}. \tag{R3'}
\end{align} 

The \emph{Temperley-Lieb algebra}  $\TL_k(\xi)$ with $\xi=-(q^{\frac{1}{2}}+q^{-\frac{1}{2}})$ is the quotient of $\mathsf{H}_k(q)$ by the further relations (compare to \cite{FK,Mor}):
\begin{align}
\sft_i\sft_{i + 1}\sft_i =\sft_i\tag{R4'}  \hspace{.3 in} 1\leq i\leq k-2. \label{R4}
\end{align}
The dimension of the Temperley-Lieb algebra $\TL_k(\xi)$ is the Catalan number $\mathcal{C}_k$ for any 
value of $\xi$.    Thus, $\dimm \TL_1(\xi) = \dimm \TL_0(\xi) = 1$, 
$\dimm \TL_2(\xi) = 2$,  $\dimm \TL_3(\xi) = 5$,  $\dimm \TL_4(\xi) = 14$, and so forth.

\medskip

\noindent
{\bf The action of the Iwahori-Hecke algebra on tensor powers.} Let $\VV$ be a module over a quasitriangular Hopf algebra $\Hf$. Recall that the mappings $\R_i := \mathsf{id} \ot \cdots \ot  \mathsf{id} \ot \R \ot  \mathsf{id} \ot \cdots \ot \mathsf{id}$ in \eqref{eq:Ri}, 
where $\R$ occupies tensor slots $i$ and $i+1$ for $1 \le i \le k-1$, 
satisfy the relations (QT1)-(QT3). Thus, there is a linear action of the braid group on the $k$-fold tensor power $\VV^{\otimes k}$ given by 
\[\sfs_i \mapsto \alpha\R_i, \quad \text{ for some } \alpha \in \mathbb k^{\times}. \] 
This is the standard action of the  braid group with $k-1$ strands on the tensor power $\VV^{\otimes k}$, and it is well-studied in the literature, see e.g., \cite{Jimbo, LR}. 

When $\VV = \VV(2,r)$, $r \in \ZZ_n$, is any two-dimensional $\Df_n$-module, we use the R-matrix of $\Df_n$ to explicitly describe an action of the Iwahori-Hecke algebra $\Hf_k(q)$ on $\VV^{\ot k}$ that induces an action of the Temperley-Lieb algebra $\TL_k(\xi)$ on $\VV^{\ot k}$. Note that this action does not depend on $r$ and the resulting representation coincides with the standard action on tensor powers described above.   

Recall that the R-matrix for $\Df_n$ is given by
\begin{align}
\label{eq:Rmat2}
\mathcal{R}=\frac{1}{n} \sum_{m,s,t=0}^{n-1} \frac{q^{-tm}}{[s]!}\,a^sb^t \otimes c^md^s
 =\frac{1}{n} \sum_{m,s,t=0}^{n-1} \frac{q^{-tm-ts}}{[s]!}\,b^t a^s\otimes c^md^s.
\end{align} 
We assume  $\mathsf{R} \in \End_{ \Df_n}(\VV^{\ot 2})$
gives the action of $\mathcal R$ on $\VV^{\ot 2}$, where the terms $a^s b^t$ act on the first tensor
factor of $\VV^{\ot 2}$ and the $c^m d^s$ on the second.
Let 
 $\sigma: \VV^{\ot 2} \rightarrow \VV^{\ot 2}$ be the map interchanging the factors,  $\sigma(w \ot x) = x \ot w$,  and set  $\R= \sigma \mathsf{R} \in 
\End_{ \Df_n}(\VV^{\ot 2})$.  

\medskip

\emph{For ease of notation, in what follows we will use $\id$ for the identity map; the space it is acting upon should be apparent from the context.}
\medskip

\begin{lemma}
\label{relationR3}  Let $\VV = \VV(2,r)$ for any $r \in \ZZ_n$. Then the transformation $\R$
on $\VV^{\otimes 2}$  satisfies 
\begin{equation}
\label{eq:lambr}
(\R-\lambda_r\id)(\R+\lambda_rq^{-1}\id)=0, \quad \text{where} \quad  \lambda_r =q^{-r(r+1)}.\end{equation}
The eigenspace of $\R$ corresponding to the eigenvalue $\lambda_r$ is spanned by $v_1\otimes v_1$,   $v_1\otimes v_2+q^r v_2\otimes v_1$, and $v_2\otimes v_2$. The eigenspace corresponding to $-\lambda_rq^{-1}$ is spanned by $v_1\otimes v_2-q^{r+1}v_2\otimes v_1$.
\end{lemma}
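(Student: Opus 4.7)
The plan is to write down the matrix of $\R$ in the standard basis $\{v_i\otimes v_j\}_{i,j=1,2}$ of $\VV^{\otimes 2}$ and read off both the polynomial relation and the eigenvectors directly. The key simplification is that on $\VV(2,r)$ both $a$ and $d$ act nilpotently with $a^2=d^2=0$, so in the sum \eqref{eq:Rmat2} defining the R-matrix only the terms with $s=0$ and $s=1$ survive when we evaluate on $\VV^{\otimes 2}$.

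First I would evaluate the $s=0$ part
$\frac{1}{n}\sum_{m,t=0}^{n-1} q^{-tm}\, b^t\otimes c^m$
on each basis vector $v_i\otimes v_j$. Using the eigenvalues of $b$ and $c$ from \eqref{eq:action}, the inner sum over $t$ collapses via the geometric identity
$$\sum_{t=0}^{n-1}q^{t(r+i-1-m)}=n\,\delta_{m,\,r+i-1\,(\text{mod }n)},$$
leaving a diagonal contribution $q^{(r+i-1)(j-r-2)}\,v_i\otimes v_j$. Next I would handle the $s=1$ part
$\frac{1}{n}\sum_{m,t}q^{-tm-t}\, b^ta\otimes c^md$.
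Since $a.v_2=0$ and $d.v_1=0$, this term acts as zero on every basis vector except $v_1\otimes v_2$; using $d.v_2=(1-q^{-1})v_1$ and the same collapsing sum argument, its contribution to $\mathsf{R}(v_1\otimes v_2)$ is $(1-q^{-1})\lambda_r\,v_2\otimes v_1$. Applying $\sigma$ then yields
\begin{align*}
\R(v_1\otimes v_1)&=\lambda_r\,v_1\otimes v_1,\qquad  \R(v_2\otimes v_2)=\lambda_r\,v_2\otimes v_2,\\
\R(v_1\otimes v_2)&=(1-q^{-1})\lambda_r\,v_1\otimes v_2+q^{-(r+1)^2}\,v_2\otimes v_1 \; \text{(after $\sigma$)},\\
\R(v_2\otimes v_1)&=q^{-r^2}\,v_1\otimes v_2,
\end{align*}
where the explicit coefficients follow by substituting $\lambda_r=q^{-r(r+1)}$.

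From this matrix, $\R$ decomposes into two $1\times 1$ blocks with eigenvalue $\lambda_r$ (spanned by $v_1\otimes v_1$ and $v_2\otimes v_2$) and a $2\times 2$ block on $\mathsf{span}\{v_1\otimes v_2,\,v_2\otimes v_1\}$. For the $2\times 2$ block I would compute the trace $(1-q^{-1})\lambda_r$ and determinant $-q^{-r^2-(r+1)^2}=-\lambda_r^2\,q^{-1}$ and check that its characteristic polynomial is exactly $(\mu-\lambda_r)(\mu+\lambda_rq^{-1})$, which then proves \eqref{eq:lambr} on all of $\VV^{\otimes 2}$. Finally, solving the two-dimensional linear systems $\R w=\lambda_r w$ and $\R w=-\lambda_rq^{-1}w$ on this block produces the eigenvectors $v_1\otimes v_2+q^rv_2\otimes v_1$ and $v_1\otimes v_2-q^{r+1}v_2\otimes v_1$ respectively.

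The main obstacle is purely bookkeeping: tracking the exponents of $q$ carefully when collapsing the double sums via the orthogonality relations $\sum_{t=0}^{n-1}q^{tj}=n\delta_{j\equiv 0}$ and verifying that the algebra simplifies to the clean expression $-\lambda_r^2 q^{-1}$ for the determinant. Once that is done, the eigenvector computation and the polynomial identity follow immediately.
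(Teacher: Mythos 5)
Your approach is the same as the paper's: restrict the R-matrix sum to the $s=0,1$ terms (since $a^2=d^2=0$ on $\VV(2,r)$), collapse the double sums by root-of-unity orthogonality to get the $4\times 4$ matrix of $\R$, and read off the quadratic relation and eigenvectors from the $2\times 2$ block. The general $s=0$ formula you derive, $q^{(r+i-1)(j-r-2)}v_i\ot v_j$, is correct and agrees with the paper. However, your displayed formulas for $\R(v_1\ot v_2)$ and $\R(v_2\ot v_1)$ have the two off-diagonal coefficients swapped relative to what your own formula gives: from $\mathsf{R}(v_1\ot v_2)=q^{-r^2}v_1\ot v_2+(1-q^{-1})\lambda_r v_2\ot v_1$ and $\mathsf{R}(v_2\ot v_1)=q^{-(r+1)^2}v_2\ot v_1$, applying $\sigma$ yields
\begin{equation*}
\R(v_1\ot v_2)=(1-q^{-1})\lambda_r\,v_1\ot v_2+q^{-r^2}\,v_2\ot v_1,\qquad \R(v_2\ot v_1)=q^{-(r+1)^2}\,v_1\ot v_2,
\end{equation*}
whereas you wrote $q^{-(r+1)^2}$ and $q^{-r^2}$ in the opposite positions. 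This slip is invisible to the trace and determinant (both are symmetric in the two off-diagonal entries), so your verification of the quadratic relation \eqref{eq:lambr} goes through unchanged; but it is fatal to the eigenvector claim: solving $\R w=\lambda_r w$ with the transposed matrix gives $v_1\ot v_2+q^{-(r+1)}v_2\ot v_1$ rather than the asserted $v_1\ot v_2+q^{r}v_2\ot v_1$. With the entries in the correct positions, the second row of $\R-\lambda_r\id$ reads $q^{-r^2}x-\lambda_r y=0$, i.e.\ $y=q^{r}x$, recovering the eigenvectors stated in the lemma. So: correct method, one transcription error to fix before the eigenvector computation.
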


 \begin{proof}  By \eqref{eq:action},  the actions of the generators of
 $\Df_n$ relative to the basis elements $\{v_1, v_2\}$ of $\VV = \VV(2,r)$ are given by the following matrices:
 $$a \rightarrow \left(\begin{matrix} 0 & 0 \\ 1 & 0 \end{matrix}\right) \qquad
b \rightarrow \left(\begin{matrix} q^r & 0 \\ 0 & q^{r+1} \end{matrix}\right) \qquad
c \rightarrow \left(\begin{matrix} q^{-(r+1)} & 0 \\ 0 & q^{-r} \end{matrix}\right)
\qquad d \rightarrow \left(\begin{matrix} 0 & \alpha_1(2) \\ 0 & 0 \end{matrix}\right),$$
where $\alpha_1(2) = 1-q^{-1}$.
 
To compute the action of $\mathsf{R}$ on $\VV^{\ot 2}$,   note that $a^s$  and $d^s$ act as 0 on $\VV$ for $s \ge 2$.  
 Then  
 \begin{align*} \mathsf{R}(v_1 \ot v_1) &= \frac{1}{n} \sum_{ m,t =0}^{n-1} q^{-tm} \, b^t v_1 \ot c^m v_1  \\
 &= \frac{1}{n} \sum_{ m,t =0}^{n-1} q^{-tm+tr-m(r+1)} \, v_1 \ot v_1  
 = \frac{1}{n}\left(\sum_{m=0}^{n-1} q^{-m(r+1)} \bigg(\sum_{t = 0}^{n-1} q^{t(r-m)}\bigg)\right) v_1 \ot v_1 \\
 &= q^{-r(r+1)}v_1 \ot v_1, 
 \end{align*}
since the inner summation over $t$ is 0 unless $m=r$, in which case it is $n$.  
 Therefore,  
 \begin{equation} 
 \label{eq:R11} 
 \R(v_1 \ot v_1) \,= \,\sigma \mathsf{R} (v_1 \ot v_1) = q^{-r(r+1)}v_1 \ot v_1 = \lambda_r v_1 \ot v_1.
 \end{equation}
 Similarly,   
 \begin{align*} \mathsf{R}(v_2 \ot v_2) &= \frac{1}{n} \sum_{ m,t =0}^{n-1} q^{-tm} \, b^t v_2 \ot c^m v_2  
= \frac{1}{n} \sum_{ m,t =0}^{n-1} q^{-tm+t(r+1)-mr} \, v_2 \ot v_2 \\ 
& = \frac{1}{n}\left(\sum_{m=0}^{n-1} q^{-mr} \bigg(\sum_{t = 0}^{n-1} q^{t(r+1-m)}\bigg)\right) v_2 \ot v_2 
= q^{-r(r+1)}v_2 \ot v_2,  \; \; \text{so that}  \end{align*}
 \begin{equation} 
 \label{eq:R22} \R(v_2 \ot v_2) \,=\, \lambda_r v_2 \ot v_2.\end{equation}
 \noindent  Now
  \begin{align*} \mathsf{R}(v_1 \ot v_2) &= \frac{1}{n} \sum_{m,t =0}^{n-1} q^{-tm} \, b^t v_1 \ot c^m v_2 + 
   \frac{1}{n} \sum_{m,t =0}^{n-1} q^{-t(m+1)} \, b^t v_2  \ot  c^m \alpha_1(2) v_1 \\ 
&\hspace{-.8cm}= \left(\frac{1}{n} \sum_{m,t =0}^{n-1} q^{-tm+tr-mr}\right) \, v_1 \ot  v_2 +  
 \frac{\alpha_1(2)}{n} \left(\sum_{m,t =0}^{n-1} q^{-t(m+1)+t(r+1)-m(r+1)}\right) v_2 \ot  v_1 \\
 &\hspace{-.8cm} = \frac{1}{n}\left(\sum_{m=0}^{n-1} q^{-mr} \bigg(\sum_{t = 0}^{n-1} q^{t(r-m)}\bigg)\right) v_1 \ot v_2 +
 \frac{\alpha_1(2)}{n} \left(\sum_{m =0}^{n-1}q^{-m(r+1)}\bigg(\sum_{t = 0}^{n-1} q^{t(r-m)}\bigg)\right)v_2 \ot v_1
  \\  &\hspace{-.8cm} = q^{-r^2}v_1 \ot v_2 + (1-q^{-1})q^{-r(r+1)}v_2 \ot v_1.  \quad \text{Therefore,}
 \end{align*}
\begin{equation}\label{eq:R12} \R(v_1 \ot v_2) \,=\, (1-q^{-1})q^{-r(r+1)}v_1 \ot v_2 + q^{-r^2}v_2 \ot v_1.\end{equation}
 \noindent Finally,  
 \begin{align*} \mathsf{R}(v_2 \ot v_1) &= \frac{1}{n} \sum_{m,t =0}^{n-1} q^{-tm} \, b^t v_2 \ot c^m v_1 
 = \frac{1}{n} \sum_{m,t =0}^{n-1} q^{-tm+t(r+1)-m(r+1)} \, v_2 \ot v_1 \\
 & = \frac{1}{n}\left(\sum_{m=0}^{n-1} q^{-m(r+1)} \bigg(\sum_{t = 0}^{n-1} q^{t(r+1-m)}\bigg)\right) v_2 \ot v_1 = q^{-(r+1)^2} v_2 \ot v_1,
 \;\;\; \text{and as a result,}
 \end{align*}
\begin{equation}\label{eq:R21} \R(v_2 \ot v_1) \,=\,  q^{-(r+1)^2}v_1 \ot v_2.\end{equation}
On the subspace of $\VV^{\ot 2}$ spanned by $v_1 \ot v_2$ and $v_2 \ot v_1$, the transformation $\R$ has matrix
$$\left(\begin{matrix}  \lambda_r -\lambda_rq^{-1} & q^{-(r+1)^2} \\ q^{-r^2} & 0 \end{matrix}\right).$$
It is easily seen that this matrix has eigenvalues $\lambda_r = q^{-r(r+1)}$ and $-\lambda_rq^{-1} = -q^{-r(r+1)-1}$ 
with corresponding eigenvectors $v_1 \ot v_2 + q^{r}v_2 \ot v_1$,
$v_1 \ot v_2 - q^{r+1} v_2 \ot v_1$,  respectively.     
Thus, $\R$ has a three-dimensional eigenspace on $\VV^{\ot 2}$ with basis $v_1 \ot v_1, v_1 \ot v_2 + q^{r}v_2 \ot v_1$, $v_2 \ot v_2$
corresponding to the eigenvalue $\lambda_r$, and a one-dimensional eigenspace with basis element
 $v_1 \ot v_2 - q^{r+1}v_2 \ot v_1$ corresponding to the eigenvalue $-\lambda_rq^{-1}$.
\end{proof} 
  
Recall that the mappings $\R_i := \mathsf{id} \ot \cdots \ot  \mathsf{id} \ot \R \ot  \mathsf{id} \ot \cdots \ot \mathsf{id}$ in \eqref{eq:Ri}, 
where $\R$ occupies tensor slots $i$ and $i+1$ for $1 \le i \le k-1$, 
satisfy the relations (QT1)-(QT3). Hence, they belong to $\End_{\Df_n}(\VV(2,r)^{\ot k})$.  As we noted earlier, the subalgebra of  $\End_{\Df_n}(\VV(2,r)^{\ot k})$ generated by the $\R_i$ is a homomorphic image of the
group algebra of the braid group on $k-1$ strands, but in fact,  we can now 
 prove that the centralizer algebra of the $\Df_n$-action on $\VV^{\ot k}$ affords a representation of the Iwahori-Hecke algebra.

\begin{proposition}\label{prop:Heckeaction}
Let  $\VV = \VV(2,r)$, $r \in \ZZ_n$, and assume $k \ge 2$. There is an algebra homomorphism
$\mathsf{H}_k(q) \rightarrow \End_{\Df_n}(\VV^{\ot k})$ given by $\sfs_i \mapsto \lambda_r^{-1}\R_i$ 
for $1 \le i \le k-1$, where $\R_i$ is as in \eqref{eq:Ri} and $\lambda_r = q^{-r(r+1)}$.
\end{proposition}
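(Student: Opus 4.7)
The plan is to verify directly that the assignment $\sfs_i\mapsto \lambda_r^{-1}\R_i$ respects the three defining relations (R1), (R2), (R3) of the Iwahori-Hecke algebra $\mathsf{H}_k(q)$. Once these are checked, the universal property of $\mathsf{H}_k(q)$ as an algebra on generators $\sfs_1,\dots,\sfs_{k-1}$ subject to these relations yields the desired homomorphism into $\End_{\Df_n}(\VV^{\ot k})$, where the target is already known to contain each $\R_i$ by (QT1).

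For the braid-type relations (R1) and (R2), I would simply invoke the quasitriangular properties (QT2) and (QT3) established in Section~\ref{S3.1}. Since $\R_i\R_j=\R_j\R_i$ for $|i-j|>1$, multiplying by the scalar $\lambda_r^{-2}$ immediately gives $(\lambda_r^{-1}\R_i)(\lambda_r^{-1}\R_j)=(\lambda_r^{-1}\R_j)(\lambda_r^{-1}\R_i)$, verifying (R1). For (R2), the Yang-Baxter relation $\R_i\R_{i+1}\R_i=\R_{i+1}\R_i\R_{i+1}$ is homogeneous of degree three in the $\R$'s, so multiplying both sides by $\lambda_r^{-3}$ immediately yields $(\lambda_r^{-1}\R_i)(\lambda_r^{-1}\R_{i+1})(\lambda_r^{-1}\R_i)=(\lambda_r^{-1}\R_{i+1})(\lambda_r^{-1}\R_i)(\lambda_r^{-1}\R_{i+1})$. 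So (R1) and (R2) are essentially automatic and require no new computation.

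The only substantive step is (R3), the Hecke quadratic relation $(\sfs_i-1)(\sfs_i+q^{-1})=0$. Here I would appeal to Lemma~\ref{relationR3}, which states that on $\VV^{\ot 2}$ the map $\R$ satisfies $(\R-\lambda_r\id)(\R+\lambda_r q^{-1}\id)=0$. The key observation is that $\R_i$ acts as $\R$ on the two tensor slots $i$, $i+1$ and as the identity on all other slots; consequently any polynomial identity satisfied by $\R$ on $\VV^{\ot 2}$ lifts slot-wise to the same identity for $\R_i$ on $\VV^{\ot k}$. Therefore
\begin{equation*}
(\R_i-\lambda_r\id)(\R_i+\lambda_r q^{-1}\id)=0 \quad \text{in } \End_{\Df_n}(\VV^{\ot k}).
\end{equation*}
Dividing by $\lambda_r^2$ gives $(\lambda_r^{-1}\R_i-\id)(\lambda_r^{-1}\R_i+q^{-1}\id)=0$, which is exactly the image of (R3) under $\sfs_i\mapsto \lambda_r^{-1}\R_i$.

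There is no serious obstacle here; the only minor point to be careful about is that the scalar rescaling by $\lambda_r^{-1}$ on the generators is consistent with the homogeneous-degree braid relations, and that the eigenvalue $\lambda_r$ from Lemma~\ref{relationR3} matches the constant term $1$ rather than $-q^{-1}$ in (R3)—which is precisely why the normalization $\lambda_r^{-1}\R_i$ (and not $-q\lambda_r^{-1}\R_i$) is the correct one.
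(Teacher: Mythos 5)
Your proposal is correct and follows essentially the same route as the paper: the braid relations (R1)--(R2) come for free from (QT2)--(QT3) by homogeneity under the rescaling, and the quadratic relation (R3) is exactly the slot-wise lift of Lemma~\ref{relationR3} divided by $\lambda_r^2$. No gaps.
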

\begin{proof}  We know from \eqref{eq:lambr} that $\R$ satisfies the relation
$(\R - \lambda_r\id)(\R+\lambda_r q^{-1}\id) = 0$ on $\VV^{\ot 2}$ when $\VV= \VV(2,r)$. Because $\R_i$ is just
the operator $\R$ applied to factors $i$ and $i+1$ in $\VV^{\ot k}$,  and  $\lambda_r \neq 0$,  
we have that the relation 
\begin{equation}\label{eq:Rilambda}\left(\lambda_r^{-1}\R_i  -\id\right)\left(\lambda_r^{-1} \R_i+q^{-1}\id\right) = 0\end{equation}
holds on $\VV^{\ot k}$.
Therefore, the map $\sfs_i \mapsto \lambda_r^{-1}\R_i$ for $1 \le i \le k-1$ extends to an algebra homomorphism
$\mathsf{H}_k(q) \rightarrow \End_{\Df_n}(\VV^{\ot k})$,  since the $\sfs_i$ and  $\lambda_r^{-1}\R_i$
satisfy the same relations (see (QT1)-(QT3) above).  Relation
 (R3) of the definition of the Iwahori-Hecke algebra is just \eqref{eq:Rilambda}.
 
\end{proof}

The standard action of the Iwahori-Hecke algebra $\mathsf{H}_k(q)$ on a $k$-fold tensor power $\VV^{\otimes k}$ factors through an action of the Temperley-Lieb algebra when dim$(\VV)=2$, see e.g., \cite[Remark 3]{Jimbo}. Using the action of $\mathsf{H}_k(q)$ on $\VV(2,r)^{\ot k}$ given by Proposition \ref{prop:Heckeaction}, we obtain the following result.

\begin{theorem}
\label{thm:TLalghom}  
Assume $q$ is a primitive $n$th root of unity for  $n\ge 2$. 
Let  $\VV = \VV(2,r)$ for any $r \in \ZZ_n$, and set  $\xi = -(q^{\half}+q^{-\half})$.  Then for $k \geq 2$,
\begin{itemize}
 \item[{\rm(a)}] There is an algebra homomorphism
$\pi:\mathsf{TL}_k(\xi) \rightarrow \End_{\Df_n}(\VV^{\ot k})$ given
by $\sft_i \mapsto  q^{\half}(\lambda_r^{-1}\R_i- \id)$ for $1 \le i \le k-1$, where $\lambda_r = q^{-r(r+1)}$,  and
 \item[{\rm(b)}] $\pi$ is an injection, that is, the action of $\mathsf{TL}_k(\xi)$ on $\VV^{\otimes k}$ is faithful.
 \end{itemize} 
 
 \end{theorem}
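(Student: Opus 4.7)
The plan for part (a) is to build on Proposition~\ref{prop:Heckeaction}, which already provides an algebra homomorphism $\Hf_k(q)\to \End_{\Df_n}(\VV^{\ot k})$ sending $\sfs_i\mapsto \lambda_r^{-1}\R_i$. Under the alternative presentation of $\Hf_k(q)$ by the generators $\sft_i=q^{\half}(\sfs_i-1)$ with relations (R1'), (R2'), (R3'), this homomorphism automatically sends $\sft_i$ to $E_i:=q^{\half}(\lambda_r^{-1}\R_i-\id)$. What remains is to verify the extra defining relation (R4') of $\TL_k(\xi)$, namely $E_iE_{i+1}E_i=E_i$ for $1\le i\le k-2$.

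To verify (R4') I plan to exploit the projector structure of $E_i$ that comes from Lemma~\ref{relationR3}. That lemma decomposes $\VV^{\ot 2}=W_+\oplus W_-$, where $W_+$ is the three-dimensional $\lambda_r$-eigenspace of $\R$ and $W_-=\kk(v_1\ot v_2-q^{r+1}v_2\ot v_1)$ is the one-dimensional $-\lambda_rq^{-1}$-eigenspace. A direct substitution shows that $E$ vanishes on $W_+$ and acts on $W_-$ by the scalar $q^{\half}(-q^{-1}-1)=\xi$, so $E_i=\xi P_i$, where $P_i$ is the projection of $\VV^{\ot k}$ onto $W_-$ in slots $i,i+1$ along $W_+$. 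Relation (R4') thus becomes the identity $P_1P_2P_1=\xi^{-2}P_1$ in $\End_{\kk}(\VV^{\ot 3})$. Since the image of $P_1$ is the two-dimensional subspace $W_-\ot \VV$, this reduces to a finite check: expand the two generators of $W_-\ot \VV$ as tensors in the $v_i$'s, decompose the second and third tensor factors into their $W_\pm$ components, apply $P_2$, and reassemble. Conceptually, (R4') is forced by the absence of a $q$-antisymmetric direction in $\VV^{\ot 3}$ when $\dim \VV=2$; this is the classical mechanism that makes the Hecke action on tensor powers of a two-dimensional module factor through Temperley--Lieb, cf.~\cite[Remark~3]{Jimbo}.

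For part (b), the plan is to identify $\pi$ with the standard diagrammatic representation of $\TL_k(\xi)$ on $(\kk^2)^{\ot k}$, in which each $\sft_i$ acts as $\xi$ times a rank-one projection onto a distinguished line in slots $i$ and $i+1$. Faithfulness of this standard representation for all $k\ge 1$ is the classical fact alluded to in the abstract and can be established by verifying that the images of the $\mathcal{C}_k$ Kauffman diagram basis elements are linearly independent in $\End_{\kk}((\kk^2)^{\ot k})$, matching the dimension identity $\dimm \TL_k(\xi)=\mathcal{C}_k$. The main obstacle I anticipate is that $\xi=-(q^{\half}+q^{-\half})$ is a special (and, when $n=2$, zero) value at our root of unity, so the generic-parameter linear independence argument must be adapted. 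I would either invoke a standard reference covering faithfulness at arbitrary $\xi$, or, following the strategy developed later in the paper, defer to the Bratteli-diagram dimension computation of Section~\ref{SS5}, which for small $k$ gives $\dimm_{\kk}\End_{\Df_n}(\VV^{\ot k})\ge \mathcal{C}_k$ and, combined with the diagrammatic upper bound on $\dimm (\pi(\TL_k(\xi)))$, forces $\pi$ to be injective in general.
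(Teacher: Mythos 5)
Your skeleton for part (a) matches the paper's: start from Proposition~\ref{prop:Heckeaction}, pass to the generators $\sft_i$ so that (R1')--(R3') come for free, and reduce everything to the single extra relation (R4'). The genuine problem is your mechanism for verifying (R4'). You write $E_i=\xi P_i$ with $P_i$ the projection onto $W_-$ along $W_+$ and reduce (R4') to the identity $P_1P_2P_1=\xi^{-2}P_1$; this presupposes both that $\VV^{\ot 2}$ is the direct sum of the two eigenspaces of $\R$ and that $\xi\ne 0$. Both fail exactly at $n=2$, which the theorem includes. There $q=-1$, the two roots $\lambda_r$ and $-\lambda_rq^{-1}$ of the quadratic in Lemma~\ref{relationR3} coincide, and $\R$ is a nontrivial Jordan block on the span of $v_1\ot v_2$ and $v_2\ot v_1$ (one checks $(\R-\lambda_r\id)^2=0$ with $\R\ne\lambda_r\id$), so there is no decomposition $W_+\oplus W_-$ and no projection $P_i$; moreover $\xi=-(q^{\half}+q^{-\half})=0$, so $\xi^{-2}$ is undefined and $E_i$ is a nonzero nilpotent rather than $\xi P_i=0$ (if it were $0$, injectivity in (b) would fail, since $\sft_i\neq 0$ in $\TL_k(0)$). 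You flag $\xi=0$ only as an obstacle for part (b), but it already invalidates your proof of (R4'). For $n\ge 3$ your finite check would go through and is a legitimate, slightly different route. The paper sidesteps the issue: it verifies $\sft_1\sft_2\sft_1=\sft_1$ directly on the four vectors $v_i\ot v_i\ot v_j$ (where $\sft_1$ already vanishes because $\R(v_i\ot v_i)=\lambda_r\,v_i\ot v_i$), and then propagates the vanishing of $\sft_1\sft_2\sft_1-\sft_1$ to the remaining four basis vectors of $\VV^{\ot 3}$ using that this element is central in $\Hf_3(q)$ and that each remaining basis vector is obtained from an already-handled one by applying $\R_1$ or $\R_2$. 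That argument uses only the quadratic relation, not diagonalizability, and is uniform in $n\ge 2$. You should either adopt it or supply a separate computation for $n=2$.

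For part (b), your primary plan --- cite a standard faithfulness result for the Temperley--Lieb action on $(\kk^2)^{\ot k}$ valid at arbitrary $\xi$ --- is exactly what the paper does (it cites Goodman--Wenzl and Martin). Your fallback via the Bratteli-diagram dimension count is not viable: that count yields $\dimm_{\kk}\End_{\Df_n}(\VV^{\ot k})=\mathcal{C}_k$ only for $k\le 2n-2$ (Table~\ref{eq:Comp} shows strict inequality afterwards); combining a lower bound on $\dimm\End_{\Df_n}(\VV^{\ot k})$ with an upper bound on $\dimm\pi(\TL_k(\xi))$ says nothing about $\ker\pi$; and injectivity for small $k$ does not propagate to larger $k$. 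Keep the citation and drop the fallback.
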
 

\begin{proof} (a) Suppose that $\R_i$ is the twist of the R-matrix of $\Df_n$ applied
to the tensor factors $i$ and $i+1$ of $\VV^{\ot k}$ as before.   
Recall that by Proposition \ref{prop:Heckeaction}, the map $\sfs_i \mapsto \lambda_r^{-1}\R_i$, $1 \le i \le k-1$,  extends to an algebra homomorphism $\Hf_k(q) \rightarrow
\End_{\Df_n}(\VV^{\ot k})$.  Therefore, setting $\sft_i = q^{\half}(\sfs_i-\id)$, we know that the
relations (R1'), (R2') (R3') for the alternative presentation of $\Hf_k(q)$ hold for $\xi = -(q^{\half}+q^{-\half})$.  
Consequently, there is an algebra homomorphism $\Hf_k(q) \rightarrow
\End_{\Df_n}(\VV^{\ot k})$ given by $\sft_i \mapsto  q^{\half}(\lambda_r^{-1}\R_i- \id)$ for $1 \le i \le k-1$.   What remains to be
shown is that  $\sft_i \sft_{i+1}\sft_i - \sft_i  \mapsto 0$ so that  $\sft_i \sft_{i+1}\sft_i - \sft_i$ is in the kernel of this homomorphism for all $1 \le i \le k-1$,
and there is an induced algebra homomorphism $\pi: \mathsf{TL}_k(\xi) \rightarrow \End_{\Df_n}(\VV^{\ot k})$ by relation (\ref{R4}). 

 It suffices  to check that $\sft_1 \sft_{2}\sft_1 = \sft_1$ on $\VV^{\ot 3}$,  as $\sft_i \sft_{i+1}\sft_i -\sft_i$ acts as the identity map on all factors of $\VV^{\ot k}$, except  for the factors
in positions $i, i+1,$ and $i+2$, and the action is the same as for $\sft_1\sft_2 \sft_1 -\sft_1$   on $\VV^{\otimes 3}$.   By the proof of Lemma~\ref{relationR3}, on the basis $\{v_1 \ot v_1, v_1 \ot v_2, v_2 \ot v_1, v_2 \ot v_2\}$ for $\VV^{\ot 2}$, the transformation $\R$ has matrix
$$\left(\begin{matrix}  \lambda_r & 0 & 0 & 0 \\
0 & \lambda_r -\lambda_rq^{-1} & q^{-(r+1)^2} & 0 \\ 
0 & q^{-r^2} & 0 & 0 \\
0 & 0 & 0 & \lambda_r \end{matrix}\right).$$
 Since  $\sft_1 =   q^{\half}(\lambda_r^{-1}\R_1- \id)$ it follows that $\sft_1$ is zero on the vectors  $v_1 \ot v_1 \ot v_1, v_1 \ot v_1 \ot v_2, v_2 \ot v_2 \ot v_1,$ and $v_2 \ot v_2 \ot v_2$ so that   $\sft_1 \sft_{2}\sft_1 = \sft_1$  holds on the subspace spanned by these four vectors. 
Using the fact that $\sft_1\sft_2\sft_1-\sft_1$ is central in $\Hf_3(q)$, it can be shown that  $\sft_1 \sft_{2}\sft_1 = \sft_1$  on all of   $\VV^{\ot 3}$. For example
\[ v_2 \ot v_1 \ot v_2 =  q^{(r+1)^2}\R_2 (v_2 \ot v_2 \ot v_1) 
=  q^{(r+1)^2}\lambda_r (q^{-\half}\sft_2 +   \id)(v_2 \otimes v_2 \otimes v_1)
\]
so that
\begin{align*} (\sft_1 \sft_{2}\sft_1 - \sft_1)(v_2 \ot v_1 \ot v_2 ) &=  q^{(r+1)^2}\lambda_r (\sft_1 \sft_{2}\sft_1 - \sft_1) (q^{-\half}\sft_2 +   \id) ) (v_2 \ot v_2 \ot v_1)\\
&=  q^{(r+1)^2}\lambda_r(q^{-\half}\sft_2 +   \id) ) (\sft_1 \sft_{2}\sft_1 - \sft_1)  (v_2 \ot v_2 \ot v_1) = 0.\end{align*}
Then similarly using \begin{align*}v_1 \ot v_2 \ot v_2 &=   q^{(r+1)^2}\R_1 (v_2 \ot v_1 \ot v_2),\\
v_1 \ot v_2 \ot v_1 &=   q^{r^2}(\R_2- \lambda_r(1-q^{-1}) \id) (v_1 \ot v_1 \ot v_2),\\ 
v_2 \ot v_1 \ot v_1 &=    q^{r^2}(\R_1- \lambda_r(1-q^{-1}) \id)  (v_1 \ot v_2 \ot v_1),\end{align*} 
we  obtain that $\sft_1 \sft_{2}\sft_1 = \sft_1$  on all of   $\VV^{\ot 3}$.

(b) The action that comes from the standard Iwahori-Hecke action and passing to the Temperley-Lieb action is known to be faithful, see e.g., \cite[Theorem 2.4]{GW} and \cite[Main Theorem]{Martin}.
\end{proof}

\subsection{Action for $\VV(3,r)$ } \label{dim3}

In this section we give an application of the explicit formula in Theorem~\ref{thm:ribbonelt} for the ribbon element when $n$ is odd.  We show that it can be used to  compute the eigenvalues of $\R$ to obtain further relations on the action of the braid group on $\VV(\ell,r)^{\otimes k}$ for arbitrary integers $1 \leq \ell \leq n,~ r \in \ZZ_n$, and $n \geq 2$.  We illustrate these results in the case when $\ell=3$.  The condition $2\ell \leq n+1$ implies that $\VV(\ell, r)^{\ot 2}$ is completely reducible, see (\ref{eq:tensdecomp}).

Our first result holds for $n$ odd and  arbitrary $\ell$, with $2 \ell \leq n + 1$, and uses the action of the ribbon element.

\begin{proposition} 
\label{obviousrel}
Let  $n \geq 3$ be odd, $2\ell \leq n+1$, and $\VV=\VV(\ell,r)$, $r \in \ZZ_n$, be a simple $\Df_n$-module of dimension $\ell$.  The braid group action where the generators act via $\check{\mathsf{R}}_i$ on $\VV^{\otimes k}$ satisfies the further relations for $1\leq i\leq k-1$:
\begin{align*}
\prod_{\UU} (\R_i^2- c_{\UU}\id)=0,
\end{align*}
where the product is over all simple $\Df_n$-modules $\UU$ occurring in the tensor product $\VV^{\otimes 2}$.  For $\UU=\VV(a,b)$ the scalar $c_\UU$ is computed as the follows:
\begin{align*}
c_\UU=q^{2r^2+(2r-1)(\ell-1)-b(a+b-1)-\frac{1}{2}(n-1)(a-1)}.
\end{align*}
Furthermore, the eigenvalues of $\R_i$ on $\VV^{\otimes k}$ come from among the values $\{\pm \sqrt{c_\UU}\}$, where $\UU$ ranges over all irreducible summands of $\VV^{\otimes 2}$. 
\end{proposition}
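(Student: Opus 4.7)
My plan is to combine two ingredients already established in the paper: the identity $\R^2 = \mathsf{R}^{\mathsf{op}}\mathsf{R}$ on $\VV^{\otimes 2}$, and formula \eqref{eq:Req2}, which gives the scalar by which $\mathcal{R}^{\mathsf{op}}\mathcal{R}$ acts on a simple summand of a completely reducible tensor product in terms of ribbon eigenvalues. First I would verify the operator identity $\R^2 = \mathsf{R}^{\mathsf{op}}\mathsf{R}$: writing $\mathcal{R} = \sum_i x_i\otimes y_i$ and using $\R = \sigma\mathsf{R}$, two applications of $\R$ to $v\otimes w$ yield $\sum_{i,j} y_j x_i.v \otimes x_j y_i.w$, which is precisely $\mathsf{R}^{\mathsf{op}}\mathsf{R}(v\otimes w)$. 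Under the hypothesis $2\ell\leq n+1$, relation \eqref{eq:tensdecomp} guarantees that $\VV^{\otimes 2}$ is completely reducible, so \eqref{eq:Req2} shows that $\R^2$ acts on each simple summand $\UU$ of $\VV^{\otimes 2}$ by the scalar $\upsilon_\VV^{\,2}/\upsilon_\UU$, where $\upsilon_W$ denotes the scalar by which the ribbon element acts on a simple module $W$.

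Next I would plug in Theorem~\ref{thm:ribbonelt}(b) to obtain this scalar explicitly. For $\UU = \VV(a,b)$ one has $\upsilon_\VV = q^{r(r+\ell-1)+\frac{1}{2}(n-1)(\ell-1)}$ and $\upsilon_\UU = q^{b(a+b-1)+\frac{1}{2}(n-1)(a-1)}$, so that
\[\frac{\upsilon_\VV^{\,2}}{\upsilon_\UU} \;=\; q^{\,2r(r+\ell-1)\,+\,(n-1)(\ell-1)\,-\,b(a+b-1)\,-\,\frac{1}{2}(n-1)(a-1)}.\]
The one manipulation required to recover the stated form of $c_\UU$ is the identity $q^n = 1$, which gives $q^{(n-1)(\ell-1)} = q^{-(\ell-1)}$; this converts $2r(\ell-1) + (n-1)(\ell-1)$ in the exponent into $(2r-1)(\ell-1)$, reproducing exactly the given formula.

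Having matched scalars on each simple summand, I would note that $\R^2 - c_\UU\id$ vanishes on $\UU$, so $\prod_\UU(\R^2 - c_\UU\id) = 0$ on $\VV^{\otimes 2}$. Since $\R_i$ on $\VV^{\otimes k}$ acts as $\R$ on tensor slots $i, i+1$ and as the identity elsewhere, the same polynomial identity holds for $\R_i$ on $\VV^{\otimes k}$. The eigenvalue assertion is then immediate: any eigenvalue of $\R_i$ is a square root of an eigenvalue of $\R_i^2$, hence lies in $\bigcup_\UU\{\pm\sqrt{c_\UU}\}$. I do not anticipate a serious obstacle here; the argument is essentially a packaging of \eqref{eq:Req2} with the explicit ribbon scalars, and the only point requiring attention is the exponent manipulation using $q^n = 1$ to bring the raw quotient $\upsilon_\VV^{\,2}/\upsilon_\UU$ into the advertised closed form.
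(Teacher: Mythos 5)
Your proposal is correct and follows essentially the same route as the paper: both identify $\R^2$ with the action of $\mathcal{R}^{\mathsf{op}}\mathcal{R}$, invoke complete reducibility of $\VV^{\otimes 2}$ under the hypothesis $2\ell\le n+1$, apply \eqref{eq:Req2} together with the ribbon scalars from Theorem~\ref{thm:ribbonelt}(b), and transfer the resulting polynomial identity from $\R$ to $\R_i$. Your exponent manipulation using $q^{(n-1)(\ell-1)}=q^{-(\ell-1)}$ correctly recovers the stated form of $c_\UU$, a step the paper leaves implicit.
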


\begin{proof} We use the eigenspace decomposition of $\VV^{\otimes 2}= \VV(\ell, r)^{\ot 2}$ under the action of $\R$. Let $W \subseteq \VV^{\otimes 2}$ be an eigenspace corresponding to eigenvalue $\alpha$. It is straightforward to check that $W$ is a $\Df_n$-module. Since $\VV^{\ot 2}$ is completely reducible, $W$ has a direct sum decomposition $W= \bigoplus_j W_j$ into irreducible $\Df_n$-summands $W_j$. For $1\leq i\leq k-1$, since $\R_i$ is just the operator $\R$ applied to factors $i$ and $i+1$ in $\VV^{\ot k}$, its eigenspace corresponding to eigenvalue $\alpha$ is $\VV^{\ot i-1} \ot \big(\bigoplus_j W_j \big) \ot \VV^{\ot k-i-1}$. That is, $\R$ and $\R_i$ share the same eigenvalues. 

Property (1) of the quasitriangular properties in Section~\ref{S3.1} is equivalent to $\check{\mathsf{R}}\Delta(x)=\Delta(x)\check{\mathsf{R}}$  for all  $x \in \Df_n$.  In other words,  the action of $\check{\mathsf{R}}$  on a simple summand  $\UU_{\omega}\subset \UU_{\mu}\otimes \UU_{\nu}$ should be a scalar,  because $\check{\mathsf{R}}\in \operatorname{End}_{\mathsf{D}_n}(\UU_{\omega})$,  and the latter space is one-dimensional by Schur's Lemma.  Since $\mathcal{R}^{\operatorname{op}}\mathcal{R}=\check{\mathsf{R}}^2$, this scalar can be computed using  (\ref{eq:Req2}) and Theorem \ref{thm:ribbonelt} (b). This gives the desired expression for $c_\UU$. As $\alpha^2 = c_{\UU}$ is an eigenvalue of $\R_i^2$, we see from the above discussion that an eigenvalue of $\R_i$ is a square root of $c_{\UU}$, determined up to a sign.
\end{proof}

The remaining computations give a refinement of the relation in Proposition~\ref{obviousrel} by finding eigenvalues for the action of $\R_i$ rather than their squares. In other words, we determine the sign of the square root of $c_{\UU}$.

The following result gives the full list of eigenvalues specifically when $\ell=3$ and $n$ is any integer, $n \geq 5$,  and hence we have an additional relation for the action of the braid group on  $\VV(3,r)^{\otimes k}$.

\begin{proposition}
\label{dim3andlarbitrary}
Let $n$ be an arbitrary integer $n \geq  5$.  The action of the braid group  on $\VV(3,r)^{\otimes k}$ factors through the further relations.
\begin{align*}
(\R_i-q^{-r^2-2r}\id)(\R_i+q^{-r^2-2r-2}\id)(\R_i-q^{-r^2-2r-3}\id)=0,  \hspace{.35 in} \text{ for }  1\leq i\leq k-1.
\end{align*}
\end{proposition}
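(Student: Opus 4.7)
The plan is to first reduce the problem to computing eigenvalues of $\R$ on $\VV(3,r)^{\otimes 2}$, and then to extract those eigenvalues by applying $\R$ to a carefully chosen vector in each simple summand. For the reduction, note that since $2\cdot 3 = 6 \le n+1$ when $n \ge 5$, the tensor product formula \eqref{eq:tensdecomp} gives the multiplicity-free decomposition
$$\VV(3,r) \otimes \VV(3,r) \;\cong\; \VV(5, 2r) \,\oplus\, \VV(3, 2r+1) \,\oplus\, \VV(1, 2r+2).$$
Because $\R \in \End_{\Df_n}(\VV(3,r)^{\otimes 2})$ and the summands are pairwise nonisomorphic, Schur's lemma implies that $\R$ acts as a scalar on each of them; call these scalars $\lambda_1, \lambda_2, \lambda_3$. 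The cubic $\prod_j(\R - \lambda_j\id)$ then annihilates $\VV(3,r)^{\otimes 2}$, and since $\R_i$ is $\R$ applied to factors $i$ and $i+1$ (and the identity on the remaining factors), the same cubic annihilates $\R_i$ on $\VV(3,r)^{\otimes k}$. So the problem reduces to identifying the three scalars as $q^{-r^2-2r}$, $-q^{-r^2-2r-2}$, and $q^{-r^2-2r-3}$.

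To identify them I would, in each simple summand, pick the unique (up to scalar) vector killed by $a$, which is recognizable by its $b$-weight and easily verified using $\Delta(a) = a \otimes b + 1 \otimes a$. These are $v_3 \otimes v_3$ in $\VV(5, 2r)$, the vector $w = v_2 \otimes v_3 - q^{r+2} v_3 \otimes v_2$ in $\VV(3, 2r+1)$, and the vector $y = v_1 \otimes v_3 - q^{r+2} v_2 \otimes v_2 + q^{2r+3} v_3 \otimes v_1$ in $\VV(1, 2r+2)$. I would then compute $\R = \sigma\mathsf{R}$ on each using \eqref{eq:Rmat2}, noting that $a$ and $d$ are nilpotent of index $3$ on $\VV(3,r)$, so only $s = 0, 1, 2$ contribute in the R-matrix sum. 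For $v_3 \otimes v_3$ only the $s = 0$ term survives and yields $\lambda_1 = q^{-r^2-2r}$. For $w$, combining the actions of $\R$ on $v_2 \otimes v_3$ and $v_3 \otimes v_2$ and using $\alpha_2(3) = q - q^{-1}$ yields $\lambda_2 = -q^{-r^2-2r-2}$. For $y$, I would compute the $3 \times 3$ matrix of $\R$ on the weight-$(2r+2)$ subspace spanned by $\{v_1 \otimes v_3,\, v_2 \otimes v_2,\, v_3 \otimes v_1\}$ and then apply it to $y$; a direct verification shows $\R(y) = q^{-r^2-2r-3}\, y$.

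The main obstacle is the $\VV(1, 2r+2)$ case: $\mathsf{R}(v_1 \otimes v_3)$ receives contributions from all three values $s = 0, 1, 2$, with the $s = 2$ term involving the coefficient $\alpha_1(3)\alpha_2(3)/[2]! = (1-q^{-1})(1-q^{-2})$, and one must verify that the three resulting matrix entries conspire to make $y$ an eigenvector with the asserted scalar. A useful consistency check (valid when $n$ is odd) is that the squares of the three computed scalars must agree with the values $c_\UU$ predicted by Proposition \ref{obviousrel}; this immediately pins down $|\lambda_j|$ and reduces the task to determining the sign in the middle summand.
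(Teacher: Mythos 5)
Your proposal is correct and follows essentially the same route as the paper: reduce to the action of $\R$ on $\VV(3,r)^{\otimes 2}$, use the multiplicity-free decomposition into $\VV(5,2r)\oplus\VV(3,2r+1)\oplus\VV(1,2r+2)$, and extract the three scalars by computing $\R$ on weight vectors (the paper uses $v_1\otimes v_1$, the span of $v_1\otimes v_2, v_2\otimes v_1$, and a rank computation on the middle weight space, while you use the $\Delta(a)$-kernel vectors of each summand, which checks out). The only substantive difference is cosmetic: verifying an explicit eigenvector $y$ versus exhibiting a row dependence in $\R - q^{-r^2-2r-3}\id$.
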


\begin{proof} Since $\R_i$ is just the operator $\R$ acting on two tensor factors of $\VV(3,r)^{\otimes k}$, it suffices to prove the statement for the action of $\R$ on $\VV(3,r)^{\otimes 2}$. 

For a $\Df_n$-module $\VV(3,r)$ with basis $v_1, v_2,$ and $v_3$,
$$\VV(3,r)\otimes \VV(3,r)\simeq \VV(5,2r) \oplus \VV(3, 2r+1)\oplus \VV(1,2r+2).$$  On the three summands, $\R$ acts by $\pm q^{-r^2-2r}$,  $\pm q^{-r^2-2r-2}$, $\pm q^{-r^2-2r-3}$ respectively, where each of the eigenvalues of $\R$ has either the plus or minus sign.  By computing the action of $\R$ on $v_1\otimes v_1$  one obtains the first eigenvalue   $q^{-r^2-2r}$ (a more general computation will be given in the next proposition).   We now show the second eigenvalue takes the minus sign. We note that for $s \geq3$, $a^s$ and $d^s$ act  as $0$ on $\VV(3,r)$.
A straightforward  computation shows  that 
\begin{align*}
\mathsf{R}(v_1\otimes v_2) =q^{r(-r-1)}v_1\otimes v_2 +q^{r(-r-2)}(1-q^{-2})v_2\otimes v_1,  \hspace{.4 in} \mathsf{R}(v_2\otimes v_1) = q^{(-r-2)(r+1)}v_2\otimes v_1.
\end{align*}
Therefore,  on the subspace spanned by $v_1\otimes v_2$ and $v_2\otimes v_1$,  $\R = \sigma \mathsf{R}$ acts via the matrix 
$$\left(\begin{matrix}
(1-q^{-2})q^{r(-r-2)} & q^{(-r-2)(r+1)} \\ q^{(-r-1)r} & 0
\end{matrix}\right),$$ and  therefore it has an eigenvalue $-q^{(-r^2-2r-2)}$ with eigenvector $v_1\otimes v_2 -q^{r+2} v_2 \otimes v_1$.

For the remaining one-dimensional module,  we compute the action of $\R$ on the subspace spanned by $v_1\otimes v_3$,  $v_2\otimes v_2$,  $v_3\otimes v_1$.  Here the summation is over all $0\leq t,m\leq n-1$ unless otherwise specified. We have 
\begin{align*}
\mathsf{R} (v_1\otimes v_3) &= \frac{1}{n}\sum_{m,t} q^{-mt-2t}\frac{1}{[2]}b^ta^2v_1\otimes c^md^2 v_3+\frac{1}{n}\sum_{m,t}q^{-mt-t}b^tav_1\otimes c^mdv_3+\frac{1}{n}\sum_{m,t}q^{-mt}b^tv_1\otimes c^mv_3\\
&= \frac{\alpha_1(3)\alpha_2(3)}{[2]n}\sum_{m,t}q^{-mt-2t+(r+2)t+(-r-2)m}v_3\otimes v_1 +\frac{\alpha_2(3)}{n}\sum_{m,t}q^{-mt-t+(r+1)t+(-r-1)m}v_2\otimes v_2\\
&\qquad +\frac{1}{n}\sum_{m,t}q^{-mt+tr-rm}v_1\otimes v_3\\
&= \frac{\alpha_1(3)\alpha_2(3)}{[2]n}\sum_{t}(q^{tr}\sum_{m}q^{m(-t-r-2)})v_3\otimes v_1+\frac{\alpha_2(3)}{n}\sum_{t}(q^{tr}\sum_{m}q^{m(-r-1-t)})v_2\otimes v_2\\
& \qquad +\frac{1}{n}\sum_t (q^{tr}\sum_m q^{m(-t-r)})v_1\otimes v_3\\
&= \frac{\alpha_1(3)\alpha_2(3)}{[2]} q^{r(-r-2)}v_3\otimes v_1 +\alpha_2(3)q^{r(-r-1)}v_2\otimes v_2+q^{-r^2}v_1\otimes v_3.
\end{align*}
By similar calculations, one computes the action on $v_2\otimes v_2$ and $v_3\otimes v_1$,  and  obtains that the matrix of the action of $\R-q^{-r^2-2r-3}\id$ on the subspace spanned by $v_1\otimes v_3$,  $v_2\otimes v_2$,  $v_3\otimes v_1$ is
\begin{align*}
\left(\begin{matrix}
\frac{1}{[2]}\alpha_1(3)\alpha_2(3) q^{r(-r-2)}-q^{-r^2-2r-3} & q^{-(r+2)(r+1)}\alpha_1(3) & q^{-(r+2)^2}\\
q^{-r(r+1)}\alpha_2(3) & q^{-(r+1)^2}-q^{-r^2-2r-3} & 0 \\
q^{-r^2} & 0 & -q^{-r^2-2r-3}\\
\end{matrix}\right).
\end{align*}
This matrix has rank $2$, since $\mathbf{u}-q^{-r-1}\mathbf{v}+q^{-2r-1}\mathbf{w}=\mathbf{0}$, where the first, second, and third rows of the matrix (in that order) are $\mathbf{u},
\mathbf{v},$ and 
$\mathbf{w}$.  Therefore $q^{-r^2-2r-3}$ is an eigenvalue.  We have recovered all eigenvalues for the action of $\R$ on $\VV(3,r)^{\otimes 2}$.
\end{proof}

\begin{remark}
One may argue that on the given summands, Proposition \ref{dim3andlarbitrary} is a stronger result than Proposition~\ref{obviousrel} as it gives us all eigenvalues of $\R_i$ explicitly for \emph{any} $n \geq 5$, not just for $n$ odd.  However, in practice,  when Proposition \ref{obviousrel} applies, it gives the eigenvalues up to a sign, and it is much easier to verify that a scalar is an eigenvalue than to find it without prior knowledge.   
\end{remark}

Next we give a first attempt at computing eigenvalues of the action of $\R_i$ on $\VV(\ell,r)^{\ot  k}$ for any integer $\ell$ with $2 \ell \leq n+1$ by computing two eigenvalues of $\R$ on $\VV(\ell,r)^{\otimes 2}$.

\begin{proposition} 
\label{toptwo} 
Assume that  $n \geq 2$ and $2 \ell \leq n+1$. Then $q^{r(1-r-\ell)}$ and $-q^{-r^2+r-r\ell-\ell+1}$ are eigenvalues of $\R$ on $\VV(\ell,r)^{\otimes 2}$. 
\end{proposition}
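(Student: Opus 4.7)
The approach is a direct matrix computation using the R-matrix formula \eqref{eq:Rmat2}. I will exhibit an explicit eigenvector for each claimed eigenvalue by restricting $\R = \sigma\mathsf{R}$ to two small invariant subspaces of $\VV(\ell,r)^{\otimes 2}$, exploiting the fact that the actions of $a$ and $d$ on the low-index vectors $v_1, v_2$ are highly constrained.

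First, I compute $\R(v_1 \otimes v_1)$. Since $d^s v_1 = 0$ for $s \geq 1$, only the $s=0$ terms of $\mathcal{R}$ contribute. Using $b^t v_1 = q^{tr} v_1$, $c^m v_1 = q^{m(1-r-\ell)} v_1$, and the identity $\sum_{t=0}^{n-1} q^{t(r-m)} = n\,\delta_{m,r}$, the double sum collapses and yields $\mathsf{R}(v_1 \otimes v_1) = q^{r(1-r-\ell)}\, v_1 \otimes v_1$, hence $\R(v_1 \otimes v_1) = q^{r(1-r-\ell)}\, v_1 \otimes v_1$. Since $q^{r(1-r-\ell)} = q^{-r^2 + r - r\ell}$, this is the first claimed eigenvalue.

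Next, I restrict $\R$ to the subspace spanned by $\{v_1 \otimes v_2,\, v_2 \otimes v_1\}$, which is invariant under $\R$. On $v_1 \otimes v_2$, only $s = 0, 1$ contribute to $\mathcal{R}(v_1 \otimes v_2)$ since $d^s v_2 = 0$ for $s \geq 2$, whereas on $v_2 \otimes v_1$ only $s=0$ contributes. The respective $t$-sums force $m = r$ and $m = r+1$. Using $\alpha_1(\ell) = 1 - q^{1-\ell}$ from \eqref{eq:action}, one obtains
\begin{equation*}
[\R] = \begin{pmatrix} (1-q^{1-\ell})\, q^{r(1-r-\ell)} & q^{(r+1)(1-r-\ell)} \\[2pt] q^{r(2-r-\ell)} & 0 \end{pmatrix}
\end{equation*}
as the matrix of $\R$ in the basis $\{v_1 \otimes v_2,\, v_2 \otimes v_1\}$. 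Its trace equals $(1 - q^{1-\ell})\, q^{r(1-r-\ell)}$, and a direct expansion of the exponents gives determinant $-q^{2r(1-r-\ell) + 1 - \ell}$. One then checks that the pair $\mu = q^{r(1-r-\ell)}$ and $\nu = -q^{r(1-r-\ell) + 1 - \ell}$ satisfies both $\mu + \nu = (1 - q^{1-\ell})\mu$ and $\mu\nu = -q^{2r(1-r-\ell) + 1 - \ell}$, so $\mu$ and $\nu$ are the two eigenvalues of this matrix. Since $\nu = -q^{-r^2 + r - r\ell - \ell + 1}$, the second claim follows.

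The calculation is routine, the main work being careful exponent bookkeeping and the repeated use of $\sum_t q^{t(r+j-m)} = n\,\delta_{m, r+j}$. The hypothesis $2\ell \leq n+1$ is not explicitly needed in this eigenvalue computation; it enters only when one wishes to identify the eigenspaces with specific simple summands of $\VV(\ell,r)^{\otimes 2}$ via the decomposition \eqref{eq:tensdecomp}. The argument specializes to the matrix computation in Lemma \ref{relationR3} when $\ell = 2$, recovering $\lambda_r$ and $-\lambda_r q^{-1}$ as the respective eigenvalues.
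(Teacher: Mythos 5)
Your proposal is correct and follows essentially the same route as the paper's proof: compute $\R(v_1\otimes v_1)$ directly to get $q^{r(1-r-\ell)}$, then restrict $\R$ to the invariant subspace spanned by $v_1\otimes v_2$ and $v_2\otimes v_1$ and read off the second eigenvalue from the same $2\times 2$ matrix. The only (harmless) difference is that you certify the eigenvalues of that matrix via its trace and determinant, where the paper simply asserts them.
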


\begin{proof}
We show that $v_1\otimes v_1$ is an eigenvector corresponding to eigenvalue $q^{r(1-r-\ell)}$: 
\begin{align*}
\mathsf{R} (v_1\otimes v_1) &=\frac{1}{n}\sum_{m,t=0}^{n-1}q^{-mt}q^{tr}q^{m(1-r-\ell)}v_1\otimes v_1\\
&= \frac{1}{n}\sum_{m=0}^{n-1}q^{m(1-r-\ell)}\sum_{t=0}^{n-1}q^{(r-m)t} v_1\otimes v_1 \\
&= q^{r(1-r-\ell)} v_1\otimes v_1.
\end{align*}
Similarly, one can compute that on the subspace spanned by $v_1\otimes v_2$ and $v_2\otimes v_1$,  $\R=\sigma \mathsf{R}$ acts via the following matrix
\begin{align*}
\left(\begin{matrix}
q^{r(1-r-\ell)}(1-q^{1-\ell}) & q^{(r+1)(1-r-\ell)}\\ q^{r(2-r-\ell)} & 0
\end{matrix}\right),
\end{align*}
which has eigenvalue $-q^{-r^2+r-r\ell-\ell+1}$.
\end{proof}

We give a conjecture for the action of $\R_i$ on the tensor power $\VV(\ell,r)^{\ot k}$ of a simple $\Df_n$-module.  

\begin{conjecture} 
\label{conjecture} 
For any simple $\Df_n$-module $\VV(\ell,r)$ with $n \geq 2$ and $2\ell \leq n+1$, the braid group action where the generators act via  $\check{\mathsf{R}}_i$ on $\VV(\ell,r)^{\otimes k}$ satisfies  the further relations for $1\leq i\leq k-1$:
\begin{align*}
\prod_{j=1}^{\ell} (\R_i- c_j  \id)=0.
\end{align*}
The product is over all simple $\Df_n$-modules $\VV(a_j,b_j)$ occurring in $\VV(\ell,r)^{\otimes 2}$, with $a_j=2\ell+1-2j$, \ $b_j=2r+j-1$, \ and $c_j=(-1)^{j+1}q^{ r^2+\frac{1}{2}(2r-1)(\ell-1)-\frac{1}{2}b_j(a_j+b_j-1)-\frac{1}{4}(n-1)(a_j-1)}$.
\end{conjecture}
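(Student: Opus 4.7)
The plan is to decompose $\VV(\ell,r)^{\otimes 2}$ into its simple summands, use Schur's lemma to conclude that $\R$ acts as a scalar $c_j$ on each, and then pin down those scalars by an explicit highest-weight calculation.

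The hypothesis $2\ell\le n+1$ together with \eqref{eq:tensdecomp} gives $\VV(\ell,r)\otimes\VV(\ell,r)\cong\bigoplus_{j=1}^{\ell}\VV(a_j,b_j)$, with $a_j=2\ell+1-2j$ and $b_j=2r+j-1$. Since $\R\in\End_{\Df_n}(\VV(\ell,r)^{\otimes 2})$ and each summand is simple, Schur's lemma forces $\R$ to act on $\VV(a_j,b_j)$ by a scalar $c_j$, so $\prod_{j=1}^{\ell}(\R-c_j\id)=0$ on $\VV(\ell,r)^{\otimes 2}$. Applying this to tensor positions $i$ and $i+1$ of $\VV(\ell,r)^{\otimes k}$ transports the relation to the whole tensor power, so the remaining task is to identify each $c_j$.

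The key step is to construct a highest-weight vector $w_j\in\VV(a_j,b_j)$, that is, a vector of $b$-weight $q^{2r+j-1}$ annihilated by $d$. Since $v_1$ is (up to scalar) the unique $d$-annihilated vector of $\VV(\ell,r)$, such a $w_j$ must have the form $w_j=\sum_{i=1}^{j}\gamma^{(j)}_i\,v_i\otimes v_{j+1-i}$. Imposing $\Delta(d)w_j=(d\otimes c+1\otimes d)w_j=0$ via $d\cdot v_p=\alpha_{p-1}(\ell)v_{p-1}$ and $c\cdot v_p=q^{p-r-\ell}v_p$ yields the two-term recursion
$$\gamma^{(j)}_{i+1}=-\gamma^{(j)}_{i}\,\frac{\alpha_{j-i}(\ell)}{\alpha_{i}(\ell)}\,q^{r+\ell+i-j}\qquad (1\le i\le j-1),$$
and the product $\prod_{k=1}^{j-1}\alpha_{j-k}(\ell)/\alpha_{k}(\ell)$ telescopes to $1$ (substitute $k'=j-k$), leaving $\gamma^{(j)}_{j}/\gamma^{(j)}_{1}=(-1)^{j-1}q^{(j-1)(r+\ell)-j(j-1)/2}$ with the crucial alternating sign. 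The scalar $c_j$ is then extracted by comparing coefficients of $v_j\otimes v_1$ on the two sides of $\R(w_j)=c_j w_j$. Since $\R=\sigma\mathsf{R}$ and the R-matrix \eqref{eq:Rmat2} only raises the first tensor index, the only summand of $w_j$ that can produce $v_j\otimes v_1$ is $\gamma^{(j)}_1 v_1\otimes v_j$, and only through the $s=0$ term; collapsing the $(m,t)$-sum via $\sum_{m=0}^{n-1}q^{m(r-t)}=n\,\delta_{r,t}$ gives coefficient $\gamma^{(j)}_1\,q^{r(j-r-\ell)}$, whence
$$c_j=\frac{\gamma^{(j)}_1}{\gamma^{(j)}_j}\,q^{r(j-r-\ell)}=(-1)^{j+1}\,q^{-r^2+r+\ell(1-r-j)+\frac{j(j-1)}{2}}.$$

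The main obstacle is reconciling this integer-exponent expression with the half-integer exponent $r^2+\tfrac{1}{2}(2r-1)(\ell-1)-\tfrac{1}{2}b_j(a_j+b_j-1)-\tfrac{1}{4}(n-1)(a_j-1)$ in the conjecture; a short expansion shows the discrepancy equals $-n(\ell-1)/2$. For $\ell$ odd this is an integer multiple of $n$ and contributes $q^0=1$ automatically. For $n$ odd, the natural convention $q^{1/2}=q^{(n+1)/2}$ gives $q^{n/2}=1$, so $q^{-n(\ell-1)/2}=1$ and the calculation completes the proof. In the residual case ($n$ even with $\ell$ even) one must fix a primitive $2n$-th root $q^{1/2}$ compatible with the $(-1)^{\ell-1}$ factor produced by $q^{n/2}=-1$; once that convention is settled, the eigenvalue computation above delivers the conjecture in full generality, the injectivity of the product relation then being immediate from the distinctness of the summands $\VV(a_j,b_j)$.
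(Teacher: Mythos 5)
The statement you are proving is presented in the paper as a \emph{conjecture}: the paper contains no proof of it, only the partial evidence of Proposition~\ref{obviousrel} (the values $c_j^2$ for $n$ odd, obtained from the ribbon element), Proposition~\ref{dim3andlarbitrary} (the case $\ell=3$), and Proposition~\ref{toptwo} (the eigenvalues for $j=1,2$). Your argument is the natural common generalization of the latter two computations, and its core is correct: each simple summand $\VV(a_j,b_j)$ of $\VV(\ell,r)^{\otimes 2}$ is generated by the unique (up to scalar) vector $w_j$ of $b$-weight $q^{2r+j-1}$ killed by $\Delta(d)=d\otimes c+1\otimes d$, and since the R-matrix \eqref{eq:Rmat2} only moves weight ``upward'' in the first factor, the eigenvalue $c_j$ can be read off from the single coefficient of $v_j\otimes v_1$. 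I have checked the recursion for the $\gamma^{(j)}_i$, the telescoping of the $\alpha$-products, and the extraction of $c_j$; they yield $c_j=(-1)^{j+1}q^{-r^2+r+\ell(1-r-j)+j(j-1)/2}$, which correctly reproduces Propositions~\ref{dim3andlarbitrary} and~\ref{toptwo}. (Two small points: the form of $w_j$ follows from its $b$-weight together with $2\ell\le n+1$, not from $v_1$ being the unique $d$-annihilated vector of $\VV(\ell,r)$; and you should say explicitly that the kernel of $\Delta(d)$ on that weight space is one-dimensional --- your recursion shows this, since $\gamma^{(j)}_1$ determines all the other coefficients and the relevant $\alpha_i(\ell)$ are nonzero because $2\ell\le n+1$.)

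The genuine error is in the final reconciliation. Expanding the conjectured exponent gives
\begin{equation*}
r^2+\tfrac12(2r-1)(\ell-1)-\tfrac12 b_j(a_j+b_j-1)-\tfrac14(n-1)(a_j-1)
\;=\;-r^2+r+\ell(1-r-j)+\tfrac{j(j-1)}{2}-\tfrac{n(\ell-j)}{2},
\end{equation*}
so the discrepancy with your formula is $-n(\ell-j)/2$, which depends on $j$, not $-n(\ell-1)/2$ as you assert. Your ensuing case analysis therefore collapses: for $\ell$ odd the discrepancy is \emph{not} automatically a multiple of $n$ (take $j$ even), and the ``residual case'' is not ``$n$ even with $\ell$ even.'' For $n$ odd your proposed convention $q^{1/2}=q^{(n+1)/2}$ does give $q^{n/2}=1$ and kills the discrepancy for every $j$, so your computation does prove the conjecture in that case. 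But for $n$ even every square root of $q$ has order $2n$, hence $q^{n/2}=-1$ and $q^{-n(\ell-j)/2}=(-1)^{\ell-j}$; the conjectured $c_j$ then carries the sign $(-1)^{j+1}(-1)^{\ell-j}=(-1)^{\ell+1}$, constant in $j$, which contradicts the alternating signs that both your computation and Proposition~\ref{dim3andlarbitrary} establish (e.g.\ $n=6$, $\ell=3$, $j=2$). So no choice of $q^{1/2}$ rescues the stated half-integer formula verbatim when $n$ is even. What you have actually proved is the integer-exponent form of the eigenvalues; this settles Conjecture~\ref{conjecture} for $n$ odd and shows that for $n$ even the conjectured normalization must be corrected by the factor $(-1)^{\ell-j}$.
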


\section{Bratteli diagrams and centralizer algebras}\label{SS5}

In this section we focus on the $k$-fold tensor power of any two-dimensional simple $\Df_n$-module $\VV=\VV(2,r)$ for any integer $n \geq 2$.  Our main result, Theorem~\ref{doublecentralizer}, provides an isomorphism between the centralizer algebra $ \End_{\Df_n}(\mathsf{V}^{\otimes k})$ and the Temperley-Lieb algebra $\mathsf{TL}_k(\xi)$ for $1 \leq k \leq 2n-2$.

 \subsection{Comparing Bratteli diagrams} \label{S5.1}

We establish a correspondence between the Bratteli diagram $\Gamma$ of partitions of at most two parts as in Section~\ref{S5.1}
and the Bratteli diagram $\Gamma_n$ which captures the decomposition of  
tensoring repeatedly with the simple $\Df_n$-module  $\VV = \VV(2,0)$. 
The vertices in $\Gamma_n$ lie in rows labeled by positive integers, where the vertices in Row $k$ correspond to the simple 
and projective direct summands of $\mathsf{V}(2,0)^{\otimes k}$. There are $s$ directed edges from $\WW$ to $\WW'$, if and only if $\WW'$ occurs as a summand  of $\WW\otimes \mathsf{V}(2,0)$ with multiplicity $s$. 

\begin{remark}Since $\VV(2,r)^{\ot k} \cong \VV(2,0)^{\ot k} \ot \VV(1,kr)$ for any  
$r \in \ZZ_n$, it suffices to consider the case $r =0$.  The Bratteli diagram for repeated tensoring with $\VV(2,r)$ can be obtained from one for $\VV(2,0)$ below simply by adding $kr$ to the second coordinate of each label in Row $k$ of $\Gamma_n$. \end{remark}
 
In Figure 1 we display Rows $k =1,2,\dots,11$ of the Bratteli graph $\Gamma_5$.
We use $(\ell,r)$ to signify the simple $\Df_n$-module $\mathsf{V}(\ell,r)$, and $\overline{(\ell,r)}$ for its projective cover $\mathsf{P}(\ell,r)$, where the second component $r$ should be read modulo $n=5$. 
The directed edges come from the decomposition formulas in Section~\ref{S2.1} for tensoring with $\VV = \VV(2,0)$.  A dotted arrow means the absence of an edge  (we include it as a ``fake'' edge for visual completeness), and a double arrow from $\WW$
to $\WW'$ means that two copies of $\WW'$ occur in $\WW \ot \VV(2,0)$.
The vertical dashed lines will be explained later on.  In Figure 2 we display the first 11 rows of  the graph $\Gamma$ of partitions with at most two parts.  
Thus on row $k$ of $\Gamma$, a partition $\beta \vdash k$ is represented  by a pair $\beta = \{\beta_1,\beta_2\}$ 
where $\beta_1 \ge \beta_2$, and $\beta_1+\beta_2 = k$. 
\ytableausetup{smalltableaux}
\begin{center}
\scalebox{0.6}{
\xymatrix{
\text{\Large Row}&&&& &&& &&& & &&\\
\text{\Large 1}&&&&\gamma=2\ar@{--}[dd]&&& & &\gamma=1 \ar@{--}[dd] & & & (2,0) \ar[dl]\ar[dr]&\\
\text{\Large 2}&&&&\ar@{--}[dd]&&& & & \ar@{--}[d] & &  (3,0)\ar[dl]\ar[dr]& & (1,1) \ar[dl]\\
\text{\Large 3}&&&&\ar@{--}[dd]&&& & & \ar@{--}[d]& (4,0)\ar[dl]\ar[dr]& &(2,1)\ar[dl]\ar[dr] &\\
\text{\Large 4}&&&&\ar@{--}[d] &&& & & (5,0) \ar@{--}[dd] \ar[dl]\ar@{-->}[dr] & & (3,1) \ar[dl]\ar[dr]& &(1,2)\ar[dl]\\
\text{\Large 5}&&&&\ar@{--}[d]&&& & \overline{(4,1)}  \ar[dl]\ar@{=>}[dr]& & (4,1)\ar[dl]\ar[dr]& & (2,2)\ar[dl]\ar[dr] &\\
\text{\Large 6}&&&&\ar@{--}[d]&&&\overline{(3,2)} \ar[dl] \ar[dr]  & & (5,1)\ar@{--}[dd] \ar[dl] \ar@{-->}[dr] & & (3,2) \ar[dl]\ar[dr] & & (1,3) \ar[dl]\\
\text{\Large 7}&&&&\ar@{--}[d]&&\overline{(2,3)} \ar[dl]\ar[dr] & & \overline{(4,2)} \ar[dl] \ar@{=>}[dr] & & (4,2) \ar[dl]\ar[dr] & & (2,3) \ar[dl]\ar[dr] &\\
\text{\Large 8}&&&& \ar@{--}[d] &\overline{(1,4)}\ar@{=>}[dl]  \ar[dr] && \overline{(3,3)} \ar[dl] \ar[dr]  && (5,2) \ar@{--}[dd] \ar[dl] \ar@{-->}[dr] && (3,3) \ar[dl] \ar[dr] && (1,4) \ar[dl]\\
\text{\Large 9}&&&&(5,5) \ar@{--}[dd] \ar[dl]\ar@{-->}[dr]&&\overline{(2,4)}  \ar[dl]\ar[dr]&&\overline{(4,3)} \ar[dl]\ar@{=>}[dr] &&(4,3) \ar[dl]\ar[dr]  && (2,4)\ar[dl]\ar[dr]& \\
\text{\Large 10}&&&\overline{(4,6)} \ar[dl]\ar@{=>}[dr] && \overline{(1,5)}\ar@{=>}[dl]  \ar[dr] &&\overline{(3,4)} \ar[dl]\ar[dr] && (5,3) \ar@{--}[d] \ar[dl] \ar@{-->}[dr]  && (3,4) \ar[dl]\ar[dr]&& (1,5) \ar[dl]\\
\text{\Large 11}&&\overline{(3,7)}&&(5,6)&&\overline{(2,5)} &&\overline{(4,4)} &\, & (4,4) &&(2,5)\\
&& && && && && && \\ }}
\end{center}

\vspace{-.6cm}
\begin{center}{\small   Figure 1: First 11 rows of the Bratteli diagram $\Gamma_5$}.\end{center} \label{bratt:gamma5}

\vspace{-.5cm}
\ytableausetup{smalltableaux}
\begin{center}
\scalebox{0.6}{
\xymatrix{
\text{\Large Row}&&&&&&& && & & &&\\
\text{\Large 1}&&&&\gamma=2\ar@{--}[dd]&&& & &\gamma=1 \ar@{--}[dd] & & & \{1\} \ar[dl]\ar[dr] &\\
\text{\Large 2}&&&&\ar@{--}[dd]&&& & & \ar@{--}[d] & & \{2\}\ar[dl]\ar[dr]& & \{1,1\}\ar[dl]\\
\text{\Large 3}&&&&\ar@{--}[dd]&&& & & \ar@{--}[d]&  \{3\}\ar[dl]\ar[dr]& &\{2,1\}\ar[dl]\ar[dr] &\\
\text{\Large 4}&&&&\ar@{--}[d] &&& & & \{4\}\ar[dl]\ar[dr] \ar@{--}[dd] \ar[dl]\ar@{-->}[dr]& &  \{3,1\} \ar[dl]\ar[dr]& &\{2,2\}\ar[dl]\\
\text{\Large 5}&&&&\ar@{--}[d]&&& &  \{5\}\ar[dl]\ar[dr]& & \{4,1\}\ar[dl]\ar[dr]& & \{3,2\}\ar[dl]\ar[dr] &\\
\text{\Large 6}&&&&\ar@{--}[d]&&& \{6\} \ar[dl] \ar[dr]   
& & \{5,1\}\ar[dl]\ar[dr] \ar@{--}[dd] & &  \{4,2\} \ar[dl]\ar[dr] & &  \{3,3\} \ar[dl]\\
\text{\Large 7}&&&&\ar@{--}[d]&& \{7\} \ar[dl]\ar[dr] & &  \{6,1\}\ar[dl]\ar[dr] & & \{5,2\} \ar[dl]\ar[dr] & & 
 \{4,3\}  \ar[dl]\ar[dr] &\\
\text{\Large 8}&&&& \ar@{--}[d] &\{8\}\ar[dl]\ar[dr] && \{7,1\} \ar[dl] \ar[dr]  && \{6,2\}\ar[dl]\ar[dr]\ar@{--}[dd]&& \{5,3\} \ar[dl] \ar[dr] && \{4,4\} \ar[dl]\\
\text{\Large 9}&&&&\{9\} \ar@{--}[dd]\ar[dl]\ar[dr] &&\{8,1\}  \ar[dl]\ar[dr]&&\{7,2\}\ar[dl]\ar[dr]  &&\{6.3\} \ar[dl]\ar[dr]  && \{5,4\}\ar[dl]\ar[dr] \\
\text{\Large 10}&&&\{10\} \ar[dl]\ar[dr]  && \{9,1\}\ar[dl]\ar[dr]&&\{8,2\} \ar[dl]\ar[dr] && \{7,3\} \ar@{--}[d] \ar[dl]\ar[dr]  && \{6,4\} \ar[dl]\ar[dr]&&\{5,5\} \ar[dl]\\
\text{\Large 11}&&\{11\} &&\{10,1\}&&\{9,2\} &&\{8,3\}&{}& \{7,4\} &&\{6,5\} \\
&&&&&&& && & & &&\\ }}
\end{center}

\vspace{-.5cm}
\begin{center}{\small  Figure 2:  First 11 rows of the Bratteli diagram $\Gamma$ of partitions with at most 2 parts.}\end{center}

Goodman and Wenzl \cite{GW} have used  $\Gamma$ to study tensor powers of the $\UU_q(\mathfrak{sl}_2)$-module $(\mathbb{C}^2)^{\otimes k}$
and the centralizing action of a certain Temperley-Lieb algebra $\TL_k(\xi_0)$, for the parameter $\xi_0$ that
depends on an $n$th root of unity $q$;  more specifically,
$\xi_0^{2} = (q^{\half}+q^{-\half})^2$. They identify ``critical lines'' in $\Gamma$, as those where $\beta_1-\beta_2 + 1 = n\,
(\modd n)$.
As it turns out,  the nodes on the critical lines correspond to the simple modules $\VV(n,s)$ on
$\Gamma_n$ labeled by $(n,s)$ for some $s \in \ZZ_n$.  In the  Bratteli diagrams $\Gamma_n$ (for $n=5$) and $\Gamma$  
displayed in Figure 1 and Figure 2, we are seeing just the first two of these critical lines, which are the vertical dashed lines corresponding to
$\gamma_1$ and $\gamma_2$. 

We assume that the partition $\beta = \{\beta_1,\beta_2\} \vdash k$ is fixed and write
\begin{equation}\label{eq:beta} \beta_1 -\beta_2 + 1 = \gamma n + \delta, \quad \text{where} \; \; 0 \le \delta \le n-1.\end{equation}
We will consider several cases that depend on the expression in \eqref{eq:beta}.
\smallskip

\noindent \textbf{Case 1:} \; $\gamma = 0$ so that $1 \le \beta_1 -\beta_2+1 = \delta \le n-1$  \smallskip

 In this case, 
 $$\{\beta_1, \beta_2\}  \leftrightarrow (\beta_1-\beta_2+1,\beta_2)$$ defines a one-to-one correspondence
between such partitions of $k$ to the right of the first critical line (the line where $\gamma = 1$) in $\Gamma$, and the simple $\Df_n$-modules
to the right of the $\gamma = 1$ line of the Bratteli diagram $\Gamma_n$.  

For example, when $n=5$ and $\beta = \{5,2\} \vdash 7$,  then $\beta_1-\beta_2+1 = 4 = 0\cdot 5 +4$, and we have
$\{5,2\} \; \leftrightarrow \; (4, 2)  \; \leftrightarrow \;  \VV(4,2)$. \smallskip

\noindent \textbf{Case 2:} \; $\gamma > 0$ and $\delta = 0$ so that $1 \le \beta_1 -\beta_2+1 = \gamma n$ \smallskip

Here $\beta$ is on the $\gamma$th critical line of $\Gamma$, and $\beta = \{\beta_1,\beta_2\} \leftrightarrow (n,\beta_2)$.

For example, when $n = 5$, and $\beta = \{6,2\} \vdash 8$,  then  $\beta_1 -\beta_2 + 1 = 5$, so $\gamma = 1$ and $\delta = 0$,
and $\{6,2\} \; \leftrightarrow \; (5,2)  \; \leftrightarrow \; \VV(5,2)$.  \smallskip

\noindent \textbf{Case 3:} \; $\gamma > 0$ and $\delta \neq  0$  \smallskip

In this case, $\delta$ measures how far $\beta = \{\beta_1,\beta_2\}$ is to the left of the $\gamma$th critical line.  
Then $\beta = \{\beta_1,\beta_2\}\, \gammarrow \, \{\beta_1-\delta, \beta_2+\delta\}$ reflects the partition from
the left of the $\gamma$th critical line to the right of it.     

For example,  when $\beta = \{11\}$  we have $11 -0 +1 = 12 = 2 \cdot 5 + 2$, so $\gamma = 2$, and $\delta = 2$.
Then $\{11\} \, \twoarrow \, \{9,2\}$ is a reflection about the second ($\gamma = 2$) critical line.    Since $9-2+1 = 8 = 1 \cdot 5 + 3$,  we need to do another reflection,
this time corresponding to the critical line $\gamma = 1$.  
So  $\{11\} \, \twoarrow \{9,2\} \, \onearrow \, \{6,5\}$.    Now with $\{6,5\}$, we are to the right of the first critical line,
and we know from Case 1,  that $\{6,5\}\; \leftrightarrow \; (6-5+1,5) = (2,5) \leftrightarrow \VV(2,5)$.      

Tensoring a projective module with a finite-dimensional module always gives a sum of
projective indecomposable modules.    The region to the left of the $\gamma=1$ critical line in $\Gamma_n$ has nodes labeled by projective modules.
So subsequent tensoring of them with $\VV(2,0)$  will yield only projective modules.     Since $\{11\}$ is to the left of the
second critical line,  it corresponds to a projective module, and since $\{9,2\}$ is between the first and second critical lines,
it also corresponds to a projective module.       Which ones?

We have $\{9,2\} \; \leftrightarrow \; \overline{(2,5)}  \; \leftrightarrow \;  \Pf(2, 5)$  (recall
labels in the second component are in $\ZZ_n (=\ZZ_5)$ here).     Now
\emph{projective} modules on opposite sides of critical lines  have complementary labels.  
Thus, $\{11\}  \; \leftrightarrow \; \overline{(3, 5+2)} = \overline{(3,7)}  \; \leftrightarrow \;  \Pf(3, 7)$.

Here is what the two 11th rows look like under the correspondence described above.  The dashed lines indicate the critical lines.   Some partitions
and module labels fall on the critical lines.
$${\small \hspace{-.8cm}\begin{array}{ccccccccccc}
   &  & { \bf  |}  & & &  {\bf  |} &&  \\
\Gamma:& \{11\} & \{10,1\} &\{9,2\}  & \{8,3\} & { \bf |} & \{7,4\}& \{6,5\} \\
  &   & { \bf  |} & &  &  {\bf |}& &  \\
\Gamma_5:& \overline{(3,7)} & (5,6) &\overline{(2,5)}&\overline{(4,4)} &{ \bf |}  & (4,4) &(2,5)\\
   \end{array}}$$
This example illustrates several phenomena worth noting.   Complementary projective modules in $\Gamma_n$ are ones with
the same composition factors, i.e. $\Pf(\ell,s)$ and $\Pf(n-\ell, \ell+s)$.  
There can exist
projective modules that occur in a row of $\Gamma_n$,  for which the corresponding simple module does not occur
in the same row.  The projective module  $\Pf(3,7)$  is an example of that.

 \subsection{Computing the dimension of the centralizer algebra}\label{S5.3}
 
Our objective here is to develop a formula for the dimension of the $k$th centralizer algebra $\End_{\Df_n}(\VV^{\ot k})$ for all $k \ge 1$, where
$\VV$ is any simple two-dimensional $\Df_n$-module $\VV(2,r)$.   The result is independent 
of the value of  $r$,  and it will be a consequence of the following lemma, which also holds for any algebraically closed field of characteristic zero.
 
\begin{lemma}\label{lem:dimhom}  For $1 \le \ell, \ell' \le n$ and $s,s' \in \ZZ_n$, the following hold:
 \begin{align}  \dimm_{\kk}\,\Hom_{\Df_n}\left(\VV(\ell,s), \VV(\ell',s')\right)\; &= \; \delta_{(\ell,s),(\ell',s')} \label{eq:dim1}\\
  \dimm_{\kk} \Hom_{\Df_n}\left(\VV(\ell,s), \Pf(\ell',s')\right)\; &= \; \delta_{(\ell,s),(\ell',s')} \label{eq:dim2} \\
 \dimm_{\kk}\Hom_{\Df_n}\left(\Pf(\ell,s), \VV(\ell',s')\right) \; &= \; \delta_{(\ell, s),(\ell',s')}\label{eq:dim3} \\
\dimm_{\kk}\Hom_{\Df_n}\left(\Pf(\ell,s), \Pf(\ell',s')\right) \; &= \;
 \begin{cases} 2 &\text{if} \; (\ell',s') = (\ell,s) \; \text{or}\; (n-\ell,s+\ell), \\
0  \, &\text{otherwise}. \end{cases}\label{eq:dim4}
\end{align}\end{lemma}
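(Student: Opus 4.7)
The plan is to handle the four identities in order of increasing complexity, relying only on Schur's lemma (since $\kk$ is algebraically closed), the composition structure of $\Pf(\ell',s')$ recalled in Section~\ref{S2.1}, and the exactness of $\Hom_{\Df_n}(\Pf(\ell,s),-)$ coming from projectivity.

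Identity \eqref{eq:dim1} is immediate from Schur's lemma. For \eqref{eq:dim2}, any $\Df_n$-map from the simple module $\VV(\ell,s)$ into $\Pf(\ell',s')$ must land in $\mathsf{soc}(\Pf(\ell',s'))$; when $\ell'<n$ this socle is isomorphic to $\VV(\ell',s')$ by the structure recalled in Section~\ref{S2.1}, and when $\ell'=n$ the target $\Pf(n,s')=\VV(n,s')$ is itself simple, so in both cases the claim reduces to \eqref{eq:dim1}. Dually, for \eqref{eq:dim3}, any $\Df_n$-map from $\Pf(\ell,s)$ to the simple $\VV(\ell',s')$ factors through the head $\Pf(\ell,s)/\mathrm{rad}\,\Pf(\ell,s)\cong\VV(\ell,s)$ (when $\ell<n$; the boundary case $\ell=n$ again reduces directly to \eqref{eq:dim1}).

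The main step is \eqref{eq:dim4}. I will apply the exact functor $\Hom_{\Df_n}(\Pf(\ell,s),-)$ to the two short exact sequences extracted from the socle filtration
\[
0\subset\mathsf{soc}(\Pf(\ell',s'))\subset\mathsf{soc}^2(\Pf(\ell',s'))\subset \Pf(\ell',s'),
\]
whose three successive quotients are $\VV(\ell',s')$, $\VV(n-\ell',s'+\ell')\oplus\VV(n-\ell',s'+\ell')$, and $\VV(\ell',s')$. By the already-established \eqref{eq:dim3}, the top and bottom layers each contribute $\delta_{(\ell,s),(\ell',s')}$ while the middle layer contributes $2\delta_{(\ell,s),(n-\ell',s'+\ell')}$, so
\[
\dimm_{\kk}\Hom_{\Df_n}(\Pf(\ell,s),\Pf(\ell',s')) \;=\; 2\delta_{(\ell,s),(\ell',s')}+2\delta_{(\ell,s),(n-\ell',s'+\ell')}.
\]
To conclude I will observe that these two Kronecker deltas cannot both equal $1$ simultaneously, since this would force $\ell'=n-\ell'$ together with $\ell'\equiv 0\pmod{n}$, impossible for $1\le\ell'<n$. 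Rewriting the second condition as $(\ell',s')=(n-\ell,s+\ell)$ then yields the stated formula.

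No step looks like a serious obstacle: the argument is essentially a direct assembly of Schur's lemma with the explicit socle structure of $\Pf(\ell',s')$ stated earlier in the paper. The only mild subtlety is the boundary behavior when $\ell=n$ or $\ell'=n$ (where $\Pf(n,\cdot)=\VV(n,\cdot)$ is simple), which must be routed through \eqref{eq:dim1} rather than through the socle-filtration computation.
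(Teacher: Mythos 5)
Your proposal is correct and follows essentially the same route as the paper: Schur's lemma for \eqref{eq:dim1}, the simple socle of $\Pf(\ell',s')$ for \eqref{eq:dim2}, the simple head for \eqref{eq:dim3}, and for \eqref{eq:dim4} the count $\dimm_{\kk}\Hom_{\Df_n}(\Pf(\ell,s),\mathsf{N})=[\mathsf{N}:\VV(\ell,s)]$ applied to the composition factors of $\Pf(\ell',s')$ (the paper cites this as Proposition~\ref{Hom}, whereas you re-derive it by exactness along the socle filtration --- the same argument). Your explicit handling of the $\ell=n$ or $\ell'=n$ boundary is if anything more careful than the paper's, which tacitly restricts \eqref{eq:dim4} to $\ell,\ell'<n$ (as in its later use, where $\VV(n,s)$ is always counted among the simples).
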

\begin{proof} The fact that $ \dimm_{\kk} \Hom_{\Df_n}\left(\VV(\ell,s), \VV(\ell,s')\right) = \delta_{(\ell,s),(\ell',s')}$ follows from 
simplicity of the modules and Schur's lemma.   
The assertion in \eqref{eq:dim2} holds because $\Pf(\ell',s')$, $\ell'\ne n$, has
a unique simple submodule, $\VV(\ell',s')$, which is its socle. 
The module $\Pf(\ell,s)$, $\ell \ne n$,  has a unique maximal submodule,  and a unique
simple quotient $\VV(\ell,s)$, which implies \eqref{eq:dim3}.  
Finally, \eqref{eq:dim4} is a direct consequence of the fact that $\Pf(\ell,s)$ has only  the simple modules
$\VV(\ell,s)$ and $\VV(n-\ell,s+\ell)$ as composition factors, each with multiplicity $2$,  
and  then Proposition \ref{Hom} below can be applied to derive the result. \end{proof}
\begin{proposition}{\rm (See \cite[Prop. 9.2.3]{EG}.)} \label{Hom} For a finite-dimensional algebra $\Hf$, let $\mathsf{S}$ be a simple module whose projective cover is $\mathsf{P}$.
Then for any  finite-dimensional $\Hf$-module $\mathsf{N}$,
\begin{align*}
\dimm_{\kk}\mathsf{Hom}_{\Hf}(\mathsf{P},\mathsf{N})=[\mathsf{N}:\mathsf{S}], 
\end{align*}
the multiplicity of $\mathsf{S}$ in a Jordan-H\"older series of $\mathsf{N}$.
\end{proposition}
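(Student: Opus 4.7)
The plan is to establish the equality $\dimm_{\kk}\Hom_{\Hf}(\mathsf{P},\mathsf{N}) = [\mathsf{N}:\mathsf{S}]$ by induction on the composition length of $\mathsf{N}$, exploiting the fact that the functor $\Hom_{\Hf}(\mathsf{P},-)$ is exact because $\mathsf{P}$ is projective, so both sides of the asserted equality are additive on short exact sequences.

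For the base case (composition length $1$), $\mathsf{N}$ is simple. Any nonzero $\Hf$-homomorphism $\mathsf{P}\to \mathsf{N}$ is surjective, and its kernel is a maximal submodule of $\mathsf{P}$; in the finite-dimensional setting, the defining property of the projective cover is that $\mathsf{P}$ has a unique maximal submodule $\operatorname{rad}(\mathsf{P})$ with $\mathsf{P}/\operatorname{rad}(\mathsf{P})\cong \mathsf{S}$. Thus every homomorphism $\mathsf{P}\to \mathsf{N}$ factors through $\mathsf{S}$. If $\mathsf{N}\not\cong \mathsf{S}$, Schur's lemma forces $\Hom_{\Hf}(\mathsf{P},\mathsf{N})=0 = [\mathsf{N}:\mathsf{S}]$. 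If $\mathsf{N}\cong \mathsf{S}$, the canonical surjection $\mathsf{P}\twoheadrightarrow \mathsf{S}$ is nonzero and $\End_{\Hf}(\mathsf{S})\cong \kk$ by Schur (using that $\kk$ is algebraically closed), so $\dimm_{\kk}\Hom_{\Hf}(\mathsf{P},\mathsf{S})=1=[\mathsf{S}:\mathsf{S}]$.

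For the inductive step, suppose $\mathsf{N}$ has composition length $\ell\ge 2$ and that the claim holds for all modules of length less than $\ell$. Pick any simple submodule $\mathsf{N}'\subseteq \mathsf{N}$, giving a short exact sequence
\[
0 \to \mathsf{N}' \to \mathsf{N} \to \mathsf{N}/\mathsf{N}' \to 0.
\]
Applying the exact functor $\Hom_{\Hf}(\mathsf{P},-)$ yields a short exact sequence of $\kk$-vector spaces, so
\[
\dimm_{\kk}\Hom_{\Hf}(\mathsf{P},\mathsf{N}) = \dimm_{\kk}\Hom_{\Hf}(\mathsf{P},\mathsf{N}') + \dimm_{\kk}\Hom_{\Hf}(\mathsf{P},\mathsf{N}/\mathsf{N}').
\]
By the inductive hypothesis applied to $\mathsf{N}'$ (length $1$) and $\mathsf{N}/\mathsf{N}'$ (length $\ell-1$), the right-hand side equals $[\mathsf{N}':\mathsf{S}] + [\mathsf{N}/\mathsf{N}':\mathsf{S}]$, and by Jordan-H\"older additivity of composition multiplicities this is $[\mathsf{N}:\mathsf{S}]$, completing the induction.

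The main obstacle, such as it is, lies entirely in the base case: one must invoke the characterization of a projective cover $\mathsf{P}$ of a simple module $\mathsf{S}$ as a projective module with $\operatorname{rad}(\mathsf{P})$ as its unique maximal submodule and $\mathsf{P}/\operatorname{rad}(\mathsf{P})\cong \mathsf{S}$. Once this structural input is in hand, the remainder of the argument is a routine application of the exactness of $\Hom_{\Hf}(\mathsf{P},-)$ combined with additivity of composition multiplicities along short exact sequences.
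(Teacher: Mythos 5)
Your proof is correct. The paper itself gives no argument for this proposition---it is quoted from \cite[Prop.~9.2.3]{EG}---and your induction on composition length, using exactness of $\Hom_{\Hf}(\mathsf{P},-)$ together with the fact that $\mathsf{P}/\operatorname{rad}(\mathsf{P})\cong \mathsf{S}$ forces $\dimm_{\kk}\Hom_{\Hf}(\mathsf{P},\mathsf{S}')=\delta_{\mathsf{S},\mathsf{S}'}$ over an algebraically closed field, is exactly the standard argument given in that reference.
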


Suppose now that $\Sf_1,\Sf_2,\dots, \Sf_{n^2}$ is a listing of the simple $\Df_n$-modules, and
$\Pf_1, \Pf_2, \dots, \Pf_{n^2}$ are respectively their projective covers.    Let $\Ir_k$ be the set of all $i \in \{1,2,\dots,n^2\}$
such that $\Sf_i$ or $\Pf_i$ or both occur in $\VV^{\ot k}$.  For $i \in \Ir_k$, let $s_i$ (resp. $p_i$) be the multiplicity of
$\Sf_i$ (resp. $\Pf_i$) in $\VV^{\ot k}$. Hence, not both $s_i$ and $p_i$ are zero when $i \in \Ir_k$.     
\textit{Because the module $\VV(n,s)$ is both simple and projective,  when $\VV(n,s)$ occurs as a summand of $\VV^{\ot k}$
 for some $s \in \ZZ_n$,   we will assume that $p_i = 0$ and $s_i$ is the multiplicity of $\VV(n,s)$ in $\VV^{\ot k}$.}   We adopt one further
convention:  When $\Pf_i = \Pf(\ell,s)$ for some $\ell \ne n$, and $p_i$ is its multiplicity in $\VV^{\ot k}$, then we
let $\Pf_i'$ be $\Pf(n-\ell, s+\ell)$ and use
$p_i'$ to denote the  multiplicity of $\Pf_i'$ in $\VV^{\ot k}$.   Thus, $\Pf(n-\ell, s+\ell)$ is $\Pf_j$ for some $j$,  and
it is also denoted as $\Pf_i'$ for $\Pf_i = \Pf(\ell,s)$.   When $k = 1$ in the next result, then $\Ir_k = \{1\}$, there is a unique summand
$\Sf_1=\VV$, and $s_1 = 1$ is the only nonzero term in \eqref{eq:dimcent}.
 
\begin{theorem}\label{thm:dimCent}  For any two-dimensional simple $\Df_n$-module $\VV$, the dimension of the centralizer algebra $\End_{\Df_n}\left(\VV^{\ot k}\right)$ for
$k \ge 1$ is given by
\begin{equation}\label{eq:dimcent}\dimm_{\kk} \End_{\Df_n}\left(\VV^{\ot k}\right)
= \sum_{i \in \Ir_k} p_i^2 + \sum_{i \in \Ir_k} (s_i+p_i)^2 +  2 \sum_{i \in \Ir_k} p_ip_i'.\end{equation}
\end{theorem}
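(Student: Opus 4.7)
The plan is to decompose $\VV^{\ot k}$ explicitly as a direct sum of simples and indecomposable projectives, use additivity of $\Hom$, and then evaluate each type of $\Hom$ term using Lemma~\ref{lem:dimhom}. By the tensor product rules (1)--(6) listed after \eqref{eq:tens}, every summand of $\VV^{\ot k}$ is either a simple module $\Sf_i$ or one of the projective covers $\Pf_i$, so I would start from
\[
\VV^{\ot k} \;\cong\; \bigoplus_{i\in\Ir_k}\bigl(s_i\,\Sf_i\oplus p_i\,\Pf_i\bigr),
\]
with the stated conventions: when $\Sf_i=\VV(n,s)$, which is simultaneously simple and projective, its multiplicity is placed in $s_i$ and $p_i$ is set to $0$.

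Applying $\End_{\Df_n}(-)$ and using that $\Hom$ distributes over direct sums gives
\[
\dimm_\kk\End_{\Df_n}(\VV^{\ot k}) = \sum_{i,j\in\Ir_k}\Bigl(s_i s_j\, a_{ij} + s_i p_j\, b_{ij} + p_i s_j\, c_{ij} + p_i p_j\, d_{ij}\Bigr),
\]
where $a_{ij},b_{ij},c_{ij},d_{ij}$ denote the $\kk$-dimensions of $\Hom_{\Df_n}(\Sf_i,\Sf_j)$, $\Hom_{\Df_n}(\Sf_i,\Pf_j)$, $\Hom_{\Df_n}(\Pf_i,\Sf_j)$, and $\Hom_{\Df_n}(\Pf_i,\Pf_j)$ respectively. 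Plugging in Lemma~\ref{lem:dimhom}, the first three are all $\delta_{ij}$ and contribute $\sum_i s_i^2+2\sum_i s_i p_i$; the fourth equals $2$ exactly when $\Pf_j\in\{\Pf_i,\Pf_i'\}$ and $0$ otherwise, contributing $2\sum_i p_i^2+2\sum_i p_i p_i'$. Combining these and rewriting $s_i^2+2 s_i p_i+p_i^2=(s_i+p_i)^2$ produces the stated formula
\[
\dimm_\kk\End_{\Df_n}(\VV^{\ot k}) = \sum_{i\in\Ir_k}p_i^2 + \sum_{i\in\Ir_k}(s_i+p_i)^2 + 2\sum_{i\in\Ir_k}p_i p_i'.
\]

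The argument is essentially mechanical once the decomposition is in hand, so the main (minor) obstacle is bookkeeping the conventions. I would verify that $\Pf_i'\neq\Pf_i$ always, since $\Pf(n-\ell,s+\ell)=\Pf(\ell,s)$ would require $n=2\ell$ together with $\ell\equiv 0\pmod n$, which is impossible for $1\le\ell<n$. I would also check that indices $i$ with $\Sf_i=\VV(n,s)$ do not spuriously enter the $p_i p_i'$ cross terms: the companion $\Pf_i'$ is only defined when $\Pf_i=\Pf(\ell,s)$ with $\ell<n$, and for the $\VV(n,s)$ indices one has $p_i=0$ by convention, so these summands contribute only the natural $s_i^2$ piece. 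With these sanity checks, each unordered pair $\{\Pf_i,\Pf_i'\}$ is automatically counted twice by the ordered sum over $i$, and the identity follows at once from the $\Hom$-dimension table of Lemma~\ref{lem:dimhom}.
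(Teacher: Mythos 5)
Your proposal is correct and follows essentially the same route as the paper: decompose $\VV^{\ot k}$ into simple and indecomposable projective summands, expand $\End_{\Df_n}(\VV^{\ot k})$ by additivity of $\Hom$, evaluate each block via Lemma~\ref{lem:dimhom}, and regroup $s_i^2+2s_ip_i+p_i^2$ as $(s_i+p_i)^2$. Your extra sanity checks (that $\Pf_i'\neq\Pf_i$ and that the $\VV(n,s)$ convention keeps those indices out of the cross terms) are sound and consistent with the paper's conventions.
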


\begin{proof} It follows from the decomposition of $\VV^{\ot k}$ into simple and indecomposable projective summands that
\begin{align*} \End_{\Df_n}\left(\VV^{\ot k}\right)  \cong  &\bigoplus_{i \in \Ir_k} \Hom_{\Df_n}\left(\Sf_i^{\oplus s_i}, \Sf_i^{\oplus s_i} \right)
 \bigoplus \;\, {\bigoplus}_{i \in \Ir_k} \Hom_{\Df_n}\left(\Sf_i^{\oplus s_i}, \Pf_i^{\oplus p_i} \right) \\
& \bigoplus \;\, \bigoplus_{i \in \Ir_k} \Hom_{\Df_n}\left(\Pf_i^{\oplus p_i}, \Sf_i^{\oplus s_i} \right) 
 \bigoplus  \;\, \bigoplus_{i \in \Ir_k} \Hom_{\Df_n}\left(\Pf_i^{\oplus p_i}, \Pf_i^{\oplus p_i} \right)\\  
& \bigoplus \bigoplus_{i \in \Ir_k} \Hom_{\Df_n}\left(\Pf_i^{\oplus p_i}, \Pf_i'^{\oplus p_i'} \right),   
\end{align*}
where if $\Pf_i = \Pf(\ell, s)$, then $\Pf_i' = \Pf(n-\ell, s+\ell)$.  Counting dimensions,  we have
\begin{align*} 
\dimm_{\kk} \End_{\Df_n}\left(\VV^{\ot k}\right) &= \sum_{i \in \Ir_k} \big(s_i^2 + s_i p_i + p_i s_i + 2p_i^2 
+ 2 p_i p_i'\big)\\
&= \sum_{i \in \Ir_k} p_i^2 +  \sum_{i \in \Ir_k} (s_i + p_i)^2 +  2\sum_{i \in \Ir_k} p_i p_i'.
\end{align*}  
\end{proof}

\begin{example} 
\label{Brat5} Pictured below are Rows 1-11 of the Bratteli diagram $\Gamma_5$, without the directed edges, but with the multiplicities displayed
as subscripts.   The multiplicities are the number of paths from $(2,0)$ at the top of the diagram to a particular vertex.  
Recall that $(\ell,s)$ is shorthand for $\VV(\ell,s)$ and $\overline{(\ell,s)}$ is
shorthand for $\Pf(\ell,s)$, where the second component should be interpreted modulo $n=5$ in this example.
{\small
$$\hspace{-.25cm}\begin{array}{cccccccccccc}
&&&&&&&&&&(2,0)_{1}\\
&&&&&&&&&(3,0)_{1}&&(1,1)_1 \\
&&&&&&&&(4,0)_{1}&&(2,1)_2& \\
&&&&&&&(5,0)_1&& (3,1)_3 && (1,2)_2\\
&&&&&&\overline{(4,1)}_{1}&&(4,1)_3&&(2,2)_5 \\
&&&&&\overline{(3,2)}_1 && (5,1)_{5}&&(3,2)_8&&(1,3)_5 \\
&&&&\overline{(2,3)}_1&&\overline{(4,2)}_6 && (4,2)_8&& (2,3)_{13}\\
&&& \overline{(1,4)}_1 && \overline{(3,3)}_7 && (5,2)_{20} && (3,3)_{21} && (1,4)_{13} \\
&& (5,5)_2 & & \overline{(2,4)}_8& &\overline{(4,3)}_{27} &&(4,3)_{21} && (2,4)_{34} \\
&\overline{(4,6)}_2 && \overline{(1,5)}_8& &\overline{(3,4)}_{35} &&(5,3)_{75} && (3,4)_{55} && (1,5)_{34} \\
\overline{(3,7)}_2 &&(5,6)_{20} & & \overline{(2,5)}_{43}& &\overline{(4,4)}_{110} &&(4,4)_{55} && (2,5)_{89} \\
\end{array}.$$     } \medskip

\noindent 
In Table ~\ref{eq:Comp}, we compare the Catalan number $\mathcal{C}_k$ with $\dimm_{\kk}\End_{\Df_n}\left(\VV^{\ot k}\right)$ for $n=5$.  
\begin{center} \begin{tabular}[t]{|c||c|c|c|c|c|c|c|c|}
\hline \hline
        $k$  & $\mathcal{C}_k$ &  $\sum_{i \in \Ir_k} p_i^2$  & $\sum_{i \in \Ir_k} (s_i+p_i)^2$ & $2\sum_{i \in \Ir_k} p_ip_i' $ &
        $\dimm_{\kk} \End_{\Df_n}\left(\VV^{\ot k}\right)$ \\
	\hline  \hline 
	1 & 1 & 0 & $1^2$ & 0 &1 \\ \hline
	2 & 2 & 0 & $1^2+1^2$ & 0 &2 \\ \hline
	3 & 5  &0 & $1^2+2^2$ & 0 & 5 \\ \hline
	4 & 14  &0 & $1^2+3^2+2^2$ & 0 & 14 \\ \hline
	5 & 42  &$1^2$ & $4^2+5^2$ & 0 & 42\\ \hline
	6 & 132  &$1^2$ & $9^2+5^2+5^2$ & 0 & 132\\ \hline
	7 & 429  &$1^2+6^2$ & $(14)^2+(14)^2$ & 0 & 429\\ \hline
	8 & 1430 & $1^2 +7^2$ & $(20)^2 + (28)^2 + (14)^2$ & 0 &1430  \\ \hline
	9 & 4862& $8^2 +(27)^2$ & $2^2 + (42)^2 + (48)^2$ & 0 & 4865 \\ \hline
	10 &16796& $2^2 +8^2 +(35)^2$ & $(75)^2 + (90)^2 + (42)^2$ & $2\cdot2\cdot8 + 2 \cdot8 \cdot2$ & 16846 \\ \hline
	11 &58786& $2^2 +(43)^2 +(110)^2$ & $(20)^2 + (165)^2 + (132)^2$ & $2\cdot2\cdot43 + 2 \cdot43 \cdot2$ & 59346 \\ \hline
	\hline 
\end{tabular}
\smallskip
\captionof{table}{\emph{Catalan number $\mathcal{C}_k$ and $\dimm_{\kk} \End_{\Df_n}\left(\VV^{\ot k}\right)$ for $n=5$ and $1 \leq k \leq 11$.}  \label{eq:Comp}} \end{center}
\end{example}

\begin{remark}  The last summand $2 \sum_{i \in \Ir_k} p_ip_i'$ does not occur for $1 \le k \le 2n-1$.  Row $k=2n-2$ is the first time a module of the form $\Pf(1,s)$ appears
in $\Gamma_n$ for some $s \in\ZZ_n$,
and when tensored with $\VV$, it produces 2 copies of $\VV(n,s+1)$ on Row $2n-1$. Going down the left-hand diagonal of the Bratteli diagram
$\Gamma_n$, all multiplicities have been equal to 1  prior to this
shift at Row $2n-1$ to multiplicity 2.   The difference $2^2 - 1^2 = 3$ accounts for the fact that $4865 = 4862+3$ when $n=5$ and $k=9$.  \end{remark}

Some further observations are recorded in the next lemma.
\begin{lemma}\label{observations}
For the top $2n-2$ rows of $\Gamma$ and $\Gamma_n$, the following are true:
\begin{itemize}
\item[{\rm (a)}]  The first $n-1$ rows of $\Gamma$ and $\Gamma_n$ are isomorphic as graphs, where vertices are identified in the order they appear.  More precisely, 
vertex $\beta = \{\beta_1,\beta_2\}\vdash k$  in $\Gamma$ corresponds to $(\ell,r)$ in Row $k$ of $\Gamma_n$  
under the correspondence $\ell = \beta_1-\beta_2+1$ and $r = \beta_2$.

\item[{\rm (b)}]  The first $2n-2$ rows of $\Gamma$ and $\Gamma_n$ differ exactly at the diamonds lying on the first critical line.
These diamonds have as their top vertices $\{n-1\}$ in $\Gamma$ and $(n,0)$ in $\Gamma_n$  and are referred to as \emph{irregular diamonds} of $\Gamma_n$. 
An irregular diamond in $\Gamma_n$ has no edges in the northeast corner and double edges on the southwest corner.

\item[{\rm (c)}] In the first $2n-2$ rows, if both $(\ell,r)$ and $\overline{(\ell,r)}$ occur in Row $k$, then they appear as reflections across the first critical line. 
Moreover, in such a row, if $\overline{(\ell,r)}$ occurs, then $(\ell,r)$ must occur.
\end{itemize}
  \end{lemma}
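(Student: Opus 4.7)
The plan is to prove (a), (b), (c) in order by induction on the row number $k$, relying on the tensor product rules (1)--(6) for tensoring with $\VV$ listed in Section~\ref{S2.1} and the correspondence between $\Gamma$ and $\Gamma_n$ detailed in Cases 1--3 of Section~\ref{S5.1}.

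For (a), induct on $k \le n-1$. The base case $k=1$ is the identification $(2,0) \leftrightarrow \{1\}$ satisfying $\ell = 1-0+1 = 2$ and $r=0$. Inductively, every vertex $(\ell,r)$ in Row $k$ with $k \le n-2$ has $\ell \le k+1 \le n-1$, so its outgoing edges in $\Gamma_n$ are governed only by rules (1) and (2). Under the bijection $\ell=\beta_1-\beta_2+1$, $r=\beta_2$, these rules translate directly to the Pieri rule $\{\beta_1,\beta_2\} \otimes \{1\} \mapsto \{\beta_1+1,\beta_2\}$ together with $\{\beta_1,\beta_2+1\}$ when $\beta_1>\beta_2$, giving the required graph isomorphism.

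For (b), first pinpoint the irregular diamonds as the four-vertex configurations whose top vertex is $\VV(n,s)$ on the first critical line and whose base lies in Row $\le 2n-2$. For each such diamond, rule (3) yields $\VV(n,s)\otimes\VV \cong \Pf(n-1,s+1)$, supplying only a single NW edge to the projective child and no edge to the NE slot. Rule (6) then yields $\Pf(n-1,s+1)\otimes\VV \cong \Pf(n-2,s+2) \oplus 2\VV(n,s+1)$, supplying the double SW edge, while rule (2) gives the single SE edge $\VV(n-1,s+1)\otimes\VV \cong \VV(n,s+1) \oplus \VV(n-2,s+2)$. Outside these diamonds, rules (1), (2), (4), (5) structurally mirror the partition Pieri rule under the labeling from Cases 1--3, so no other differences appear between the two graphs.

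For (c), the key observation is that the labeling convention of Case 3---a projective $\overline{(\ell,r)}$ on the left of the first critical line inherits its label from the simple $(\ell,r)$ at its partition-theoretic reflection---forces the claimed pairing, provided the reflected position is itself populated in the same row of $\Gamma_n$. Verify this by a second induction on $k$: projectives first appear at Row $n$ as $\Pf(n-1,1)$ from rule (3) applied to $(n,0)$, while $\VV(n-1,1)$ is simultaneously produced on the right by rule (2) applied to $\VV(n-2,0)$ at Row $n-1$. In the inductive step, rules (4)--(6) propagate projective labels leftward and rule (2) propagates the matching simple labels rightward, with the irregular-diamond corrections from (b) preserving the pairing. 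The main obstacle is the bookkeeping here, specifically ruling out that any projective crosses the $\gamma=2$ critical line within the first $2n-2$ rows (which would break the Case 3 labeling convention); this is controlled by the row bound $k\le 2n-2$, since the leftmost vertex of $\Gamma$ in Row $k$ has $\beta_1-\beta_2+1 = k+1 \le 2n-1 < 2n$, keeping every left-of-$\gamma=1$ vertex strictly to the right of the $\gamma=2$ line.
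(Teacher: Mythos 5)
The paper records this lemma as a list of observations and gives no proof of its own, so the only benchmark is whether your direct verification from the tensor rules (1)--(6) and the Case 1--3 correspondence of Section~\ref{S5.1} actually goes through; that is clearly the intended route, and your treatment of (a) and of the irregular diamonds in (b) is correct. Two concrete slips need repair. First, in the base case of (c) you produce $\VV(n-1,1)$ in Row $n$ by applying rule (2) to $\VV(n-2,0)$ in Row $n-1$; but $\VV(n-2,0)$ does not lie in Row $n-1$ (a simple $\VV(\ell,r)$ to the right of the first critical line in Row $k$ satisfies $\ell-1+2r=k$, so $\VV(n-2,0)$ sits in Row $n-3$), and rule (2) applied to it would in any case yield $\VV(n-1,0)\oplus\VV(n-3,1)$. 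The correct parent is $\VV(n-2,1)$, which does lie in Row $n-1$. Second, rule (4) does \emph{not} mirror the Pieri rule: $\Pf(1,s)\ot\VV\cong\Pf(2,s)\oplus 2\VV(n,1+s)$ creates a second family of irregular diamonds, on the $\gamma=2$ critical line, with a doubled edge. Your conclusion survives only because $\Pf(1,s)$ first appears in Row $2n-2$, so rule (4) is never invoked within the first $2n-2$ rows; you should state that explicitly rather than list (4) among the Pieri-mirroring rules, since as written the claim about (4) is false.

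The inductive step for (c) is also left vague (``rules (4)--(6) propagate projective labels leftward\dots''), although you have correctly isolated the quantitative control that makes everything work. It can be closed without induction: all vertices in Row $k$ occupy positions of a single parity relative to the first critical line; the occupied left (projective) positions run from $1$ up to $k+1-n$, while the occupied right (simple) positions run up to $n-1$ (or $n-2$, matching parity); the hypothesis $k\le 2n-2$ gives $k+1-n\le n-1$, so every occupied left position has its mirror position occupied on the right. The Case~3 convention assigns to the left position $j$ the projective label obtained by reflecting to the right position $j$, and one checks (as you did for rule (2), and as rule (5) confirms for projectives) that the tensor rules realize exactly these labels; hence whenever $\overline{(\ell,r)}$ occurs, $(\ell,r)$ occurs at the mirror position, which is both halves of (c). With the base case corrected and the role of rule (4) clarified, the argument is sound.
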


\subsection{Comparing multiplicities of summands in $\Gamma$ and $\Gamma_n$.}\label{S5.4}

Since our arguments in this section will involve using multiplicities in various rows of the Bratteli graphs $\Gamma$ and $\Gamma_n$, we will adopt the following notational
conventions: 

Recall that the first (rightmost) critical line in $\Gamma_n$ (the one corresponding to $\gamma = 1$) separates the vertices corresponding to simple summands  and the vertices corresponding to projective summands.
In Row $k$, starting from the first critical line and moving to the left, the path counts for the vertices corresponding to the projective modules
will be recorded here as $p_1^{(k)}, p_2^{(k)}, p_3^{(k)}, \dots$, from \emph{right to left} (see the picture below). 
From the first critical line  moving to the right, the path counts will be recorded as $s_1^{(k)}, s_2^{(k)}, s_3^{(k)}, \dots$, from \emph{left to right}. 
If the first critical line cuts through a vertex in Row $k$,  the path count for that vertex is recorded as $s_0^{(k)}$, and we set $p_0^{(k)}=0$.
Otherwise both $s_0^{(k)}$ and $p_0^{(k)}$ are assumed to be 0.  When the same simple module
occurs in a later row, say in row $\ell > k$ for some $\ell$,  its path count will have the label $s_j^{(\ell)}$ (with the same subscript).  Analogously,  $p_j^{(k)}$ records the multiplicity of its projective cover, and $s_0^{(k)}$ records an occurrence of a module $\VV(n,s)$, which is both simple and projective among the summands.   
Furthermore, when
a summand does not occur in a certain row, we assume the corresponding value $p_j^{(k)}$ or $s_j^{(k)}$ is 0. In particular,  since there are no projective summands for $\VV^{\ot k}$ when $1 \le k \le n-1$, 
it follows that $p_j^{(k)}=0$ for all $j$ and all $1\leq k \leq n-1$.
\ytableausetup{smalltableaux}
\begin{center}
\scalebox{0.7}{
\xymatrix{
\text{Row }k-1 &\dots & p_2^{(k-1)} \ar[dr]\ar[dl] && p_1^{(k-1)}\ar[dr]\ar[dl] & & s_0^{(k-1)} \ar@{--}[dd] \ar[dl]\ar@{-->}[dr] & & s_1^{(k-1)} \ar[dl]\ar[dr]& & s_2^{(k-1)} \ar[dl]\\
\text{Row }k & p_3^{(k)} \ar[dr] && p_2^{(k)} \ar[dl]\ar[dr] & & p_1^{(k)} \ar[dl]\ar@{=>}[dr]& & s_1^{(k)}\ar[dl]\ar[dr]& & s_2^{(k)} \ar[dl]\ar[dr] &\dots\\
\text{Row }k+1 &\dots &p_2^{(k+1)}&& p_1^{(k+1)}   & & s_0^{(k+1)} & & s_1^{(k+1)}  & & s_2^{(k+1)}
}} 
\end{center}

We set up the notation for path counts in $\Gamma$ in a similar fashion denoting ones on Row $k$ to the right of the first critical line by
 $\wt{s}_i^{(k)}$ from \emph{left to right}, and ones to the left of the first critical line starting with $\wt{p}_1^{(k)}$, $\wt{p}_2^{(k)}$, and proceeding \emph{right to left}.

 Recall that in Part (b) of Lemma~\ref{observations}, in Rows $1$ through $2n-2$, the vertices of $\Gamma$ and $\Gamma_n$ can be identified, so that the edges only differ at the irregular diamonds. This labeling of path counts depends on $n$, even though the underlying graph $\Gamma$ remains the same.

The following is key in our dimension argument.
\begin{proposition}\label{identifiedpathcount}   
For $1\leq k\leq 2n-2$, 
\begin{itemize}
\item[\rm {(a)}]  ${p}_i^{(k)}=\wt{p}_i^{(k)}$,
\item[\rm {(b)}]  ${p}_i^{(k)}+s_i^{(k)}=\wt{s}_i^{(k)}$,
\end{itemize}
where the index $i$ runs over all subscripts that occur in Row $k$.
\end{proposition}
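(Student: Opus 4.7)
The plan is to prove (a) and (b) simultaneously by induction on $k$. For the base case $1 \le k \le n-1$, Lemma~\ref{observations}(a) gives a graph isomorphism between the first $n-1$ rows of $\Gamma$ and $\Gamma_n$, and complete reducibility of $\VV^{\otimes k}$ in this range forces $p_i^{(k)} = 0$; likewise no partition with $\beta_1 - \beta_2 \ge n-1$ occurs at row $\le n-1$, so $\wt p_i^{(k)} = 0$. Both claims then reduce to $s_i^{(k)} = \wt s_i^{(k)}$, which holds because the graphs are isomorphic.

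For the inductive step from row $k$ to row $k+1 \le 2n-2$, I expand each multiplicity $p_i^{(k+1)}$, $s_i^{(k+1)}$ via the six tensor-product rules for $\VV(\ell,s)\otimes\VV$ and $\Pf(\ell,s)\otimes\VV$ listed after~\eqref{eq:tensdecomp}, and expand the corresponding $\wt p_i^{(k+1)}, \wt s_i^{(k+1)}$ via the box-addition rule in $\Gamma$. The partition-module correspondence from Section~\ref{S5.1} pairs the parent sets in the two diagrams position by position, so the IH at row $k$ yields both claims at row $k+1$ case by case. For simple $\VV(\ell,s)$ with $2 \le \ell \le n-2$ and projective $\Pf(\ell,s)$ with $2 \le \ell \le n-2$, the recursions match exactly (e.g., $p_{\ell,s}^{(k+1)} = p_{\ell-1,s}^{(k)} + p_{\ell+1,s-1}^{(k)}$), and summing IH(a) and IH(b) yields (b). The boundary cases $\ell = 1$ and $\ell = n-1$ need a bit more care; in particular, $\Pf(n-1,s)$ has parents $\Pf(n-2,s)$ and $\VV(n,s-1)$ by rules (5) and (3), which matches the two partition parents of $\{n-1+s,\, s-1\}$ in $\Gamma$.

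The only truly delicate vertex is the critical-line vertex $\VV(n,s)$, for which rules (2), (4), and (6) together give
\[
s_0^{(k+1)} \;=\; s_{n-1,s}^{(k)} \,+\, 2\,p_{1,s-1}^{(k)} \,+\, 2\,p_{n-1,s}^{(k)},
\]
whereas $\wt s_0^{(k+1)} = \wt s_{n-1,s}^{(k)} + \wt p_{n-1,s}^{(k)}$ in $\Gamma$. The hard part will be the extra term $2p_{1,s-1}^{(k)}$ coming from rule (4), which has no partition analogue. Tracing when each $\Pf(\ell,\cdot)$ first appears in $\Gamma_n$ --- $\Pf(n-1,\cdot)$ at row $n$ via rule (3), then each subsequent $\Pf(\ell,\cdot)$ one row later via rules (5)--(6) --- gives first appearance at row $2n-1-\ell$, so $\Pf(1,\cdot)$ first enters at row $2n-2$. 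Hence $p_{1,s-1}^{(k)} = 0$ for every $k \le 2n-3$, which is exactly why the proposition has the cutoff $k \le 2n-2$. Substituting $s_{n-1,s}^{(k)} = \wt s_{n-1,s}^{(k)} - \wt p_{n-1,s}^{(k)}$ and $p_{n-1,s}^{(k)} = \wt p_{n-1,s}^{(k)}$ from the IH then gives $s_0^{(k+1)} = \wt s_0^{(k+1)}$, completing the induction.
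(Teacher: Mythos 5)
Your proof is correct and follows essentially the same route as the paper's: a row-by-row induction comparing the branching recursions of $\Gamma$ and $\Gamma_n$, with the semisimple range $k\le n-1$ as base case and the critical-line vertex as the only delicate point. The paper organizes the inductive step positionally (two cases according to whether the row contains a vertex on the first critical line, using the irregular-diamond structure from Lemma~\ref{observations}), whereas you organize it by the six tensor-product rules and make explicit that the rule-(4) contribution $2p^{(k)}_{1,s-1}$ vanishes for $k\le 2n-3$ because $\Pf(1,\cdot)$ first appears in Row $2n-2$ --- a point the paper leaves implicit.
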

\begin{proof}
The first $n-1$ rows of $\Gamma$ and $\Gamma_n$ are isomorphic as graphs.  Therefore,  the assertions in Proposition \ref{identifiedpathcount} are straightforward in light of the fact that $p_i^{(k)}=0$ for $1\leq k\leq n-1$, and direct summands
that are projective, but not simple,  occur only after Row $n-1$.

For  $n\leq k\leq 2n-2$,  we proceed by induction on $k$, where the base case $k=n$ is as follows:
\begin{center}
\scalebox{0.7}{
\xymatrix{
\text{Row }n-1  & & s_0^{(n-1)} \ar@{--}[d] \ar[dl]\ar@{-->}[dr] & & s_1^{(n-1)} \ar[dl]\ar[dr]& & s_2^{(n-1)} \ar[dl]\ar[dr] & \dots \\
\text{Row }n &  p_1^{(n)}  & & s_1^{(n)} & & s_2^{(n)}  &   & s_3^{(n)}
}} 
\end{center}
By applying Lemma~\ref{observations} to both $\Gamma$ and $\Gamma_n$, we have $p_1^{(n)}=s_0^{{(n-1)}}=\wt{s}_0^{(n-1)}=\wt{p}_1^{(n)}$, where the middle equality holds by the statement for $k=n-1$, discussed in the semisimple case (i.e., when $1 \le k \le n-1$).

By a similar argument, $p_1^{(n)}+s_1^{(n)}=s_0^{(n-1)}+s_1^{(n-1)}=\wt{s}_0^{(n-1)}+\wt{s}_1^{(n-1)}=\wt{s}_1^{(n)}$, where the last equality is applying Lemma~\ref{observations} to $\Gamma$ at $\wt{s}_1^{(n)}$ (notice the dashed arrow above is replaced by a solid arrow in $\Gamma$). To see that (b) is true for $i>1$, observe that $p_i^{(n)}=0$ for $i>1$, and $s_i^{(n)}$ has two solid incoming arrows. Therefore,  $$s_i^{(n)}+p_i^{(n)}=s_i^{(n)}=s_{i-1}^{(n-1)}+s_{i}^{(n-1)}=\wt{s}_{i-1}^{(n-1)}+\wt{s}_{i}^{(n-1)}=\wt{s}_i^{(n)}.$$

We now proceed with the main induction step. Suppose statements (a) and (b) are true for $k=j-1$. We claim the statements are true for $k=j$ by a discussion of two cases.  

\textbf{Case 1:} Row $j$ has no vertices lying on the first critical line (i.e. no $s_0^{(j)}$ occurs).
\begin{center}
\scalebox{0.7}{
\xymatrix{
\text{Row }j -1& \dots & p_1^{(j-1)}\ar[dl]\ar[dr]  & & s_0^{(j-1)} \ar@{--}[d] \ar[dl]\ar@{-->}[dr] & & s_1^{(j-1)} \ar[dl]\ar[dr]& & s_2^{(j-1)} \ar[dl]\ar[dr] & \dots \\
\text{Row }j & p_2^{(j)} &&  p_1^{(j)}  & & s_1^{(j)} & & s_2^{(j)}  &   & s_3^{(j)}
}} 
\end{center}
By the induction hypothesis, $p_0^{(j-1)}+s_0^{(j-1)}=\wt{s}_0^{(j-1)}$, but $p_0^{(j-1)}=0$ by definition, therefore  $s_0^{(j-1)}=\wt{s}_0^{(j-1)}$. Also $p_1^{(j-1)}=\wt{p}_1^{(j-1)}$ by induction hypothesis. It follows that $p_1^{(j)}={p}_1^{(j-1)}+s_0^{(j-1)}=\wt{p}_1^{(j-1)}+\wt{s}_0^{(j-1)}=\wt{p}_1^{(j)}$. For $i>1$, it is straightforward that $p_i^{(j)}={p}_i^{(j-1)}+p_{i-1}^{(j-1)}=\wt{p}_i^{(j-1)}+\wt{p}_{i-1}^{(j-1)}=\wt{p}_i^{(j)}$, because arrows to the left of the first critical line are all solid, between Rows $j-1$ and $j$. Now we turn to  the path counts to the right of the critical line. Notice $$p_1^{(j)}+s_1^{(j)}=p_1^{(j-1)}+s_0^{(j-1)}+s_1^{(j-1)}\stackrel{*}{=}s_0^{(j-1)}+\wt{s}_1^{(j-1)}=\wt{s}_0^{(j-1)}+\wt{s}_1^{(j-1)}=\wt{s}_1^{(j)},$$
where the equality marked by $*$ is given by the induction hypothesis for (b) when $k=j-1$ and $i=1$. Also $s_0^{(j-1)}=\wt{s}_0^{(j-1)}$ for the next equality, as proven earlier. The argument is easier for $i>1$ because the edges that are identified in $\Gamma_n$ and $\Gamma$ are away from the axis:
$$p_i^{(j)}+s_i^{(j)}=p_i^{(j-1)}+p_{i-1}^{(j-1)}+s_i^{(j-1)}+s_{i-1}^{(j-1)}=\wt{s}_i^{(j-1)}+\wt{s}_{i-1}^{(j-1)}=\wt{s}_i^{(j)}.$$

\textbf{Case 2:}  Row $j$ has a vertex lying on the first critical line (i.e. $s_0^{(j)}$ occurs): \begin{center}
\scalebox{0.7}{
\xymatrix{
\text{Row }k-1 & p_3^{(k-1)} \ar[dr] && p_2^{(k-1)} \ar[dl]\ar[dr] & & p_1^{(k-1)}  \ar[dl]\ar@{=>}[dr] & \ar@{--}[d] & s_1^{(k-1)}  \ar[dl]\ar[dr]& & s_2^{(k-1)} \ar[dl]\ar[dr] &\dots\\
\text{Row }k &\dots &p_2^{(k)}&& p_1^{(k)}   & & s_0^{(k)} & & s_1^{(k)}  & & s_2^{(k)}
}} 
\end{center}
Here, (a) is true for all $i$ by an argument similar to that in Case 1, due to the fact all $p_i^{(k)}$ have two incoming arrows, which are identified with those edges in $\Gamma$. To see (b) is true for $i=0$, observe that
\begin{align*}
p_0^k+s_0^k=s_0^k=2p_1^{k-1}+s_1^{k-1}=p_1^{k-1}+\wt{s}_1^{k-1}=\wt{p}_1^{k-1}+\wt{s}_1^{k-1}=\wt{s}_0^k.
\end{align*}
The remaining multiplicities for (b) when $i>0$ are true  by an argument similar to that in Case 1.  This concludes the proof of Proposition \ref{identifiedpathcount}.
\end{proof}
In Theorem \ref{doublecentralizer}, we will use the path count comparisons in Proposition  \ref{identifiedpathcount} together with the result in Theorem \ref{thm:dimCent} to show that the Temperley-Lieb algebra 
$\TL_k(\xi)$ is indeed isomorphic to the centralizer algebra $\End_{\Df_n}(\VV(2,r)^{\ot k})$ for any $r \in \mathbb Z_n$ and $1 \le k \le 2n-2$.

\begin{theorem}
\label{doublecentralizer} 
Assume $n \ge 2$,  and  $\VV = \VV(2,r)$ for any $r \in \mathbb Z_n$. Then the algebra homomorphism $\pi: \mathsf{TL}_k(\xi) \to \End_{\Df_n}(\mathsf{V}^{\otimes k})$ in Theorem~\ref{thm:TLalghom}
is an isomorphism for $1\leq k\leq 2n-2$.
 \end{theorem}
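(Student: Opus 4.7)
The plan is to combine the injectivity of $\pi$ established in Theorem~\ref{thm:TLalghom}(b) with a dimension count, reducing the isomorphism question to proving the identity
\[
\dimm_{\kk} \mathsf{TL}_k(\xi) \;=\; \dimm_{\kk} \End_{\Df_n}(\VV^{\otimes k})
\]
in the range $1 \leq k \leq 2n-2$. Once the two dimensions agree, the injectivity of $\pi$ forces it to be an isomorphism.

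I would begin by expressing $\dimm_{\kk} \mathsf{TL}_k(\xi) = \mathcal{C}_k$, the $k$-th Catalan number, as a sum of squared path counts in the Bratteli diagram $\Gamma$ of partitions with at most two parts. By the classical double-centralizer pairing between $\Uq(\mathfrak{sl}_2)$ at generic $\mathsf{q}$ and $\mathsf{TL}_k$ acting on $(\CC^2)^{\otimes k}$ (equivalently, by the standard bijection between two-row standard Young tableaux and Dyck paths), this yields
\[
\mathcal{C}_k \;=\; \sum_j \bigl(\wt{s}_j^{(k)}\bigr)^2 + \sum_j \bigl(\wt{p}_j^{(k)}\bigr)^2,
\]
where the sums range over all vertices in Row $k$ of $\Gamma$, labeled by position as in Section~\ref{S5.4}.

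Next I would apply Theorem~\ref{thm:dimCent} to expand
\[
\dimm_{\kk} \End_{\Df_n}(\VV^{\otimes k}) \;=\; \sum_i p_i^2 + \sum_i (s_i+p_i)^2 + 2 \sum_i p_i p_i',
\]
and show that the cross-term $2 \sum_i p_i p_i'$ vanishes identically for $1 \leq k \leq 2n-2$. A nonzero contribution would require the simultaneous appearance in $\VV^{\otimes k}$ of a pair of complementary projective covers $\Pf(\ell,s)$ and $\Pf(n-\ell,s+\ell)$. By the tensor-product rules of Section~\ref{S2.1}, the earliest row of $\Gamma_n$ in which any $\Pf(1,s)$ appears is Row $2n-2$; tensoring such a summand with $\VV$ produces $\Pf(2,r+s) \oplus 2\,\VV(n,r+1+s)$ rather than the complementary projective $\Pf(n-1, s+1)$. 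Propagating the rules shows that no complementary pair of projective summands can coexist before Row $2n$, so the cross-term vanishes throughout our range.

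Finally I would invoke Proposition~\ref{identifiedpathcount}, which for $1 \leq k \leq 2n-2$ gives the path-count identifications $p_j^{(k)} = \wt{p}_j^{(k)}$ and $s_j^{(k)} + p_j^{(k)} = \wt{s}_j^{(k)}$, and substitute into the two surviving sums to obtain
\[
\dimm_{\kk} \End_{\Df_n}(\VV^{\otimes k}) \;=\; \sum_j \bigl(\wt{p}_j^{(k)}\bigr)^2 + \sum_j \bigl(\wt{s}_j^{(k)}\bigr)^2 \;=\; \mathcal{C}_k \;=\; \dimm_{\kk} \mathsf{TL}_k(\xi),
\]
and then conclude the isomorphism from injectivity. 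The main obstacle in this outline is the combinatorial verification that the cross-term $2 \sum_i p_i p_i'$ vanishes for $k \leq 2n-2$: this requires carefully tracking how the tensor-product rules in Section~\ref{S2.1} propagate projective summands through $\Gamma_n$ and ruling out complementary pairs before Row $2n$. Once this bookkeeping is secured, the Catalan identity together with Proposition~\ref{identifiedpathcount} combine mechanically to yield the dimension match.
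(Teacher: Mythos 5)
Your proposal follows essentially the same route as the paper: use the injectivity of $\pi$ from Theorem~\ref{thm:TLalghom}(b), write $\dimm_{\kk}\mathsf{TL}_k(\xi)=\mathcal{C}_k$ as the sum of squared path counts in $\Gamma$, apply Theorem~\ref{thm:dimCent} with the observation that the cross-term $2\sum_i p_ip_i'$ vanishes for $1\le k\le 2n-2$, and match the two dimension formulas via Proposition~\ref{identifiedpathcount}. Your extra bookkeeping on when complementary projective pairs first coexist agrees with the paper's Remark following Table~\ref{eq:Comp}, so the argument is correct and not materially different.
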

 
\begin{proof} 
It  follows immediately from the dimension count (see Proposition \ref{identifiedpathcount}) that
\begin{align*}
\dimm_{\mathbb C}\mathsf{TL}_k(\xi)=\sum_{i \in \Ir_k} \wt{s}_i^2+\sum_{i \in \Ir_k} \wt{p}_i^2=\sum_{i \in \Ir_k} (p_i+s_i)^2+\sum_{i \in \Ir_k}p_i^2 = \dimm_{\mathbb C}\End_{\Df_n}(\mathsf{V}^{\otimes k}),
\end{align*}
where the last equality is a consequence of Theorem \ref{thm:dimCent} and the fact that $p_ip_i' = 0$ for $i \in \Ir_k$ when 
$1 \le k \le 2n-2$. Since the map $\pi$ is injective for all $k$, it follows
by the above dimension count that it is also surjective, hence an isomorphism,  when $1\leq k\leq 2n-2$.  
\end{proof}

\subsection{Further questions} In Theorem \ref{thm:TLalghom}  of this paper, we have shown that for an arbitrary two-dimensional simple $\Df_n$-module $\VV = \VV(2,r)$,  there exists an injective algebra homomorphism $\pi: \mathsf{TL}_k(\xi) \rightarrow \End_{\Df_n}(\VV^{\ot k})$ for $\xi = -(q^{\half}+q^{-\half})$, and when $1 \le k \le 2n-2$, $\pi$ is an isomorphism according to Theorem~\ref{doublecentralizer}.
  \begin{enumerate}
  \item What generates the full centralizer algebra  $\End_{\Df_n}(\VV(2, r)^{\ot k})$ when $k \ge 2n-1$? 
  \item For arbitrary $r \in \ZZ_n$, can $\End_{\Df_n}(\VV(2,r)^{\ot k})$ be realized as a diagram algebra?
  \item Is the category of $\mathsf{D}_n$-modules a highest weight category, in the sense of \cite{CPS}? If so, can we give an explicit description of the standard, costandard and tilting modules as in \cite{RW2}?
  \item If the above is true, do the Jones-Wenzl projectors (see e.g., \cite{K}) provide projections onto each standard summand of $\VV(\ell,r)^{\otimes k}$?    
 \item  Do the $p$-Jones-Wenzl projectors of \cite{BLS} provide projections onto the tilting summands of $\VV(\ell,r)^{\otimes k}$?
  \item For other simple $\Df_n$-modules $\VV(\ell,r)$ with $\ell \ge 3$,  what is the decomposition of $\VV(\ell,r)^{\otimes k}$ into simple and
  projective summands, and what is the centralizer algebra $\End_{\Df_n}(\VV(\ell,r)^{\ot k})$? \end{enumerate}

\subsection*{Acknowledgments}  We thank the referee for helpful comments that  enabled us to generalize and shorten the paper.  Our work on this project began in connection with  the workshop WINART2 at  the University of Leeds in May 2019.  The authors would like to extend thanks 
to the organizers of WINART2 and  to the University of Leeds for its hospitality, and to acknowledge support
provided by a University of Leeds conference grant,  the London Mathematical Society Workshop Grant WS-1718-03,  the US National Science Foundation DMS 1900575, the Association for Women in Mathematics (NSF Grant DMS-1500481), and by a research fellowship from the Alfred P. Sloan Foundation.  Nguyen was supported by the Naval Academy Research Council.  Biswal gratefully acknowledges the Max Planck Institute for the fellowship she received as a postdoctoral researcher there and for providing an excellent atmosphere for research.  Benkart passed away on April 29, 2022.


\end{document}